\theoremstyle{plain}
\newtheorem{theorem}{Theorem}[section]
\newtheorem{proposition}[theorem]{Proposition}
\newtheorem{lemma}[theorem]{Lemma}
\theoremstyle{definition}
\newtheorem{remark}[theorem]{Remark}
\newtheorem{example}[theorem]{Example}
\theoremstyle{remark}
\renewenvironment{thebibliography}[1]{%
\begin{oldthebibliography}{#1}%
\setlength{\baselineskip}{.9em}
\linespread{1}
\small
\setlength{\parskip}{0.3ex}%
\setlength{\itemsep}{.2em}%
}%
{%
\end{oldthebibliography}%
}
\newcommand{\eps}{\varepsilon}
\newcommand{\R}{\mathbb{R}}
\DeclareMathOperator{\avg}{avg}
\DeclareMathOperator{\Var}{Var}
\newcommand{\tsum}{{\textstyle \sum}}
\newcommand{\degsymb}{$^{\circ}\hspace{0pt}$}
\newcommand{\mykill}[1]{}
\newcommand{\MN}[1]{#1}
\numberwithin{equation}{section}
\newlength{\flexcheckerboardsize}
\newcommand{\defineflexcheckerboard}[4]{
    \setlength{\flexcheckerboardsize}{#2}
    \pgfdeclarepatterninherentlycolored{#1}
        {\pgfpointorigin}{\pgfqpoint{2\flexcheckerboardsize}    
        {2\flexcheckerboardsize}}
        {\pgfqpoint{2\flexcheckerboardsize}
        {2\flexcheckerboardsize}}%
        {
            \pgfsetfillcolor{#4}
            \pgfpathrectangle{\pgfpointorigin}{
            \pgfqpoint{2.1\flexcheckerboardsize}    
                {2.1\flexcheckerboardsize}}%
          \pgfusepath{fill}
          \pgfsetfillcolor{#3}
          \pgfpathrectangle{\pgfpointorigin}
            {\pgfqpoint{\flexcheckerboardsize}
            {\flexcheckerboardsize}}
          \pgfpathrectangle{\pgfqpoint{\flexcheckerboardsize}
            {\flexcheckerboardsize}}
            {\pgfqpoint{\flexcheckerboardsize}
            {\flexcheckerboardsize}}
            \pgfusepath{fill}
        }
}
\definecolor{mygreen}{RGB}{0,130,0}
\definecolor{myred}{RGB}{150,0,25}
\definecolor{myorange}{RGB}{255,100,0}
\begin{document}

\title{\vspace{-2.4em}
  Climate Change Adaptation under Heterogeneous Beliefs
\date{\today}
\author{
  Marcel Nutz%
  \thanks{
  Columbia University, Departments of Statistics and Mathematics, mnutz@columbia.edu. Research supported by an Alfred P.\ Sloan Fellowship and NSF Grants DMS-1812661, DMS-2106056. MN is grateful to Harrison Hong and Jos{\'e} Scheinkman for helpful comments and encouragement.
  }
  \and
  Florian Stebegg%
  \thanks{
  Columbia University, Department of Statistics, florian.stebegg@columbia.edu.
  } 
  }
}
\maketitle \vspace{-1.3em}

\begin{abstract}
We study strategic interactions between firms with heterogeneous beliefs about future climate impacts. To that end, we propose a Cournot-type equilibrium model where firms choose mitigation efforts and production quantities such as to maximize the expected profits under their subjective beliefs. It is shown that optimal mitigation efforts are increased by the presence of uncertainty and act as substitutes; i.e., one firm's lack of mitigation incentivizes others to act more decidedly, and vice versa.
\end{abstract}

\section{Introduction}

There is broad consensus among scientists that anthropogenic emissions of greenhouse gases are the main driving factor for climate change. Nevertheless, there is considerable uncertainty about the magnitude of future climate change and its impacts.\footnote{See \citep{IPCC.18summary, USGCRP.17,  WEF.16}, among others.}
Firms making long-term investments, such as electric utilities planning power plants, face uncertainty about the future regulatory environment. For instance, a utility anticipating carbon taxes may opt for a sustainable technology even if it is more expensive at the time of planning.\footnote{\cite{Economist0727.19} stresses the risk that plants become uneconomic: ``In April, Indiana's utility commission rejected a proposal for a gas plant by Vectren \!\dots for just that reason. If America one day sets a price on carbon emissions, customers could be left paying for utilities' bad bets on fossil fuels.'' BlackRock CEO \cite{Fink.20} warns clients that ``coal is \dots highly exposed to regulation because of its environmental impacts.''} Emissions-related tax rates are endogenous because firms' decisions impact the magnitude of climate change as well as public scrutiny, which in turn influence regulatory decisions.%
\footnote{A similar argument could be made for reputational risks, consumer demand, etc. In the spirit of \cite{BarnettBrockHansen.20}, we use taxes as the single target variable in our highly stylized model.}
The importance of uncertainty in climate change economics has been emphasized in the recent literature; see for instance \citep{BrockHansen.17,GillinghamEtAl.18} for recent surveys with numerous references. There are also various works on game-theoretic aspects of climate change mitigation---mostly focusing on whether, or under which circumstances, sufficient mitigation can be achieved.\footnote{See, for instance, \citep{BarrettDannenberg.12}.}
This literature generally assumes that %
agents are homogeneous; two exceptions are \citep{Brechet.14}\footnote{%
\cite{Brechet.14} consider a variation of Nordhaus' DICE-2007 model with co-existing agents in the framework of model predictive control. The population consists of two agents of equal size, a business-as-usual agent not taking into account their own impacts and an agent solving the full optimization problem. In a numerical simulation, the authors conclude that there exists a strong incentive to play business-as-usual and that total emissions are close to a model with only business-as-usual agents.
}
and \citep{Kiseleva.16}\footnote{%
\cite{Kiseleva.16} formulates a model with evolutionary dynamics and three types of agents, distinguished by whether they believe in anthropogenic climate change (``weak/strong skeptics'') and the possibility of a climate catastrophe (``science-based''). The impact of adaptation and pollution costs is described as well as the evolutionary type dynamics, with a focus on whether climate catastrophe can be prevented in the absence of science-based types, the latter being answered positively. %
}.
Clearly the presence of uncertainty is an important reason for the co-existence of heterogeneous beliefs.\footnote{\cite{PoortingaEtAl.11} conduct an empirical study on public skepticism about climate change in the British population and find that a majority is uncertain what the effects of climate change will be. The authors argue that another reason is that ``climate change is perceptually a distant issue.'' \cite{DiGiuliKostovetsky.14} show that Democratic-leaning firms spend more on corporate social responsibility, including environmental, than Republican-leaning firms.} In this paper we take a first step towards studying strategic interactions between firms that differ in their beliefs about future climate impacts. %
We formulate a tractable Cournot-type equilibrium model where firms make irreversible decisions about production and emissions with the aim of maximizing expected profits. Products are subject to  taxes which are endogenous and uncertain at the time of planning. While the model is very stylized, it highlights a strategic aspect of climate change mitigation and allows us to analyze how firms' beliefs about future taxes influence their own and their competitors' choices, as well as total emissions. One finding is that environmental choices act as substitutes: one firm's lack of action in climate change mitigation incentivizes others to increase their efforts.%

For concreteness, we summarize the state of the climate by the global mean near-surface temperature and a firm's emissions by its carbon dioxide output. As in classical Cournot competitions, one can think of the model as having two periods.\footnote{See \citep{Tirole.88} for background on Cournot models.} In the first, $n$~firms make an irreversible decision about the quantity of good to be produced, say the number and size of power plants. In addition, firms choose a technology: hydroelectrical, coal, etc. We assume that by combining technologies this choice boils down to a continuous parameter $r$ which represents the amount of carbon emitted per unit of good: $r=0$ stands for a zero-emission technology and $r=1$ stands for the ``business-as-usual'' technology with the largest emissions.\footnote{Technology $r=0$ is the ``backstop technology'' in the terminology of \cite{Nordhaus.18}.} The production cost is determined by the technology choice and saving emissions is costly. Firms maximize the profits that will be realized when markets settle in the second period as detailed below. The products per se, say electrical power, are considered undifferentiated and are perfect substitutes from the consumers' point of view. As a result, the price paid by consumers is determined by inverse demand as in standard Cournot models. 
The only reason for differentiation among firms is that firms have heterogeneous beliefs about the tax rate (per unit of carbon) that will prevail in the second period.%
\footnote{See for instance \citep{Chari.18} for arguments that unregulated markets are not able to mitigate climate change. While our description assumed for simplicity that differentiation is exclusively due to taxes, the latter may also stand in for consumers' potential willingness to pay a premium for green technology. Like the taxes, the magnitude of this premium is endogenous and uncertain.}
Taxes are paid by the firms and distributed to consumers lump-sum. We think of the two periods as substantially separated in time, reflecting the long planning horizon e.g.\ for large hydroelectrical or nuclear power plants. Thus, net profits will be modeled as random variables and firms maximize \emph{expected} profits at the time of planning. We treat the determination of prices, taxes and production as occurring at a single point in time, with these quantities representing averages during the planning period.
As pointed out by a referee, this static modeling is inappropriate for a rapid and large climate event; e.g., if consequences of climate change  increase dramatically at a threshold in temperature. Following such an event, the firms' available set of actions and their profit maximization would need to be adapted dynamically.

The specific form of taxes in our model is motivated by two stylized facts that we describe next. Climate science tries to predict the anomaly (temperature change) over a period of time as a function of an emissions scenario (carbon emissions for each year). While it is a high-confidence statement that additional carbon emissions cause temperature to increase, there is significant uncertainty about the precise magnitude: different state-of-the-art models produce significantly different projections even for the same emissions scenario.\footnote{%
See \citep{IPCC.18summary} for a broad survey, Figure 11.25(a) of~\citep{IPCC.13} for temperature predictions made by various climate models for four standardized scenarios, and \citep{PreinEtAl.15} for a survey of climate models. Reasons for the difficulty to forecast temperature include nonlinear dynamics (e.g., laws of convection, saturation of oceans, thawing of ice), size and heterogeneity of the planet, length of the required time horizon, random shocks (e.g., volcano eruptions), and others.}
To obtain a tractable model, we use that the increase $T$ in temperature over a time interval is approximately proportional to the cumulative carbon emissions $K$ over that interval: $T=\alpha K$.\footnote{See~\citep{AllenEtAl.09,MatthewsEtAl.09}. This linearity is the approximate combined result of several nonlinear effects, and valid for regimes of moderate emissions. \cite{MacDougallFriedlingstein.15} explain the phenomenon analytically by the diminishing radiative forcing from CO$_{2}$ per unit mass being compensated for by the diminishing ability of the ocean to take up heat and carbon. 
As pointed out by a referee, these models may  allow for the temperature to revert to a lower level if the world reduced total emissions, as implied by the proportionality of the change of temperature. Because such a scenario seems unlikely and the time horizon would presumably be  greater than the horizon of the model, we do not consider temperature decreases further.} The constant of proportionality $\alpha$ is called transient climate response (TCRE). While the approximate linearity of the carbon--climate response function is robust across a range of models, the value of $\alpha$ is subject to model uncertainty; indeed, Figure~\ref{fi:TCREhistogram} suggests that a broad range of values are reasonable parameters for the models tested.
\begin{figure}[tbh]
\begin{centering}
\includegraphics[width=0.50 \textwidth, trim={11cm 20.3cm 2.1cm 2.5cm},clip]{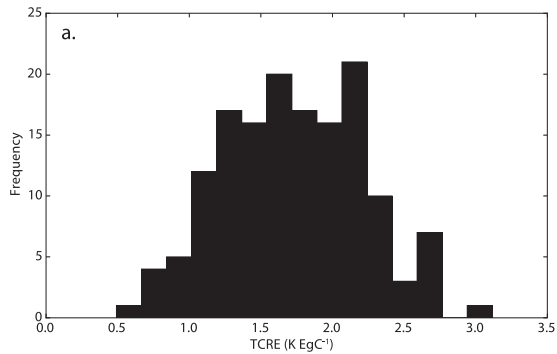}
\caption{Histogram of numerical values of TCRE found from 150 model variants, in \degsymb C  per trillion tonnes of carbon. The 5th
to 95th percentile range is $[0.9,2.5]$. Figure~3(a) of~\citep{MacDougallSwartKnutti.17}, reproduced with permission.}
\label{fi:TCREhistogram}
\end{centering}
\end{figure}
This motivates that we incorporate heterogeneous beliefs about climate change in our model through $\alpha$: firms agree-to-disagree about the TCRE. 
Climate change ``believers'' assume that the TCRE has a relatively higher value whereas climate change ``skeptics'' assume that the value is lower, with a value of zero representing the view that carbon has no impact on temperature. More precisely, firms may acknowledge uncertainty about the correct value of the TCRE and use a probability distribution for $\alpha$. Level and uncertainty are then represented by the mean and the variance of that subjective distribution.\footnote{In particular, the main influencing factor for firms' decisions is the subjective belief about what will happen when markets settle and only incorporates temperature change through carbon emissions. This is highly stylized but allows us to mathematically solve the model. As pointed out by a referee, more realistic models of climate change would also include random shocks. We omit this feature as adding a second family of distributions would complicate the analysis  and we would expect the economic conclusions to be largely equivalent.}

\begin{figure}[tbh]
\begin{center}
\includegraphics[width=.9\textwidth, trim={2.5cm 9.3cm 2.7cm 13.9cm},clip]{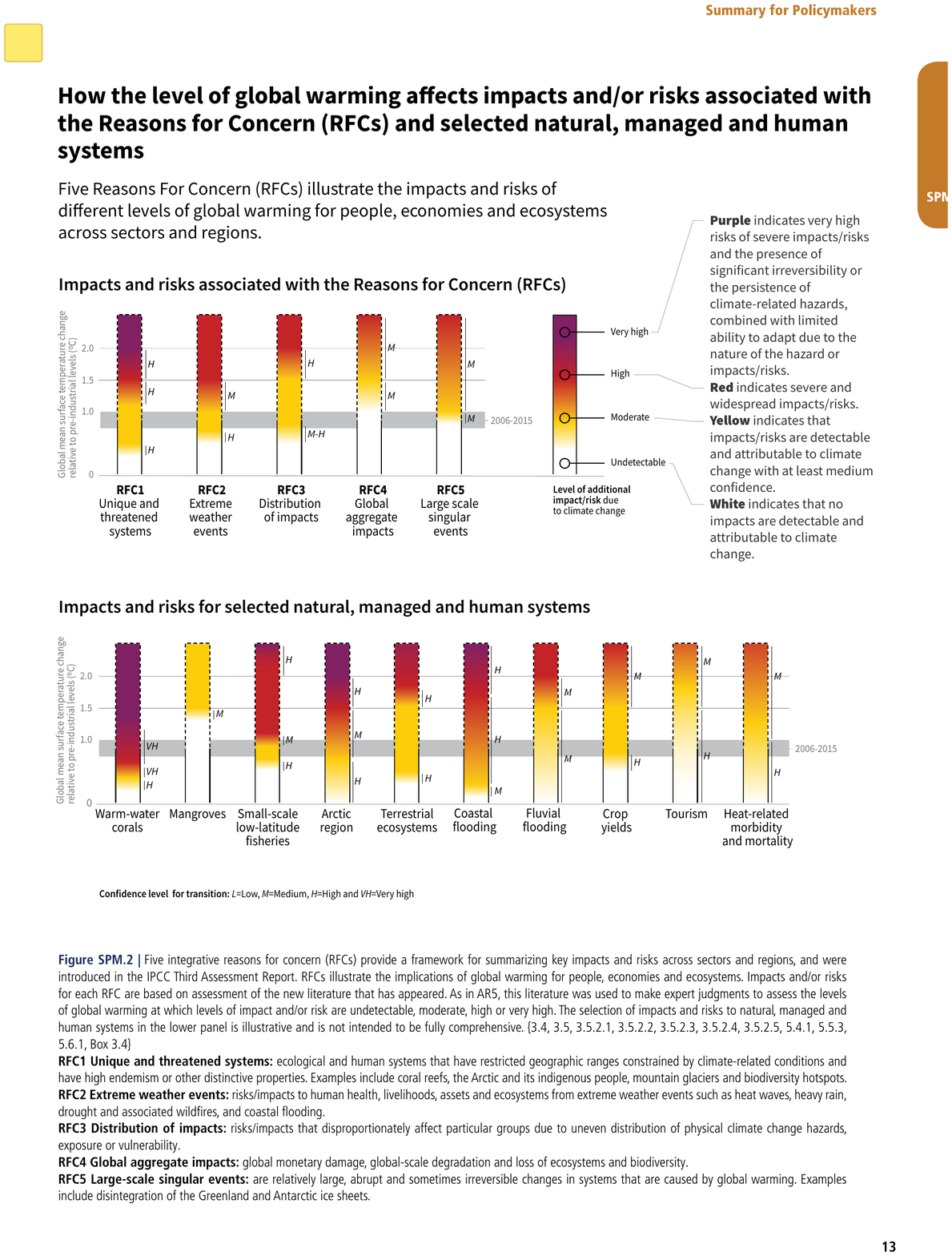}
\caption{Impacts and risks for selected natural, managed and human systems, ranging from ``undetectable'' to ``very high'' (coloring) and confidence levels for the indicated transitions (Low, Medium, High, Very High). Figure SPM.2 of \citep{IPCC.18summary}.}
\label{fi:impacts}
\end{center}
\end{figure}

Next, we describe how warming affects profits. Impacts of global warming are manifold
and admittedly difficult to quantify. Figure~\ref{fi:impacts} illustrates past and expected future impacts on a variety of systems. While the global warming of approximately 1\degsymb C between 1750 and the present has only had limited effect on most of these systems, the next 0.5\degsymb C are expected to have a much more significant impact, and the following 0.5\degsymb C even more so---the marginal impact is increasing.\footnote{See \citep{IPCC.18summary} for a survey on climate impacts. Coral reefs, for example, are expected to decline by >99\% at 2\degsymb C warming. 
The nonlinearity of climate impacts is reinforced by tipping events; i.e., relatively abrupt macroscopic changes in the climate system that are expected at increased temperatures, such as melting of the West Antarctic ice sheet or methane release from thawing permafrost. See \citep{LentonCiscar.13} for a list of nine different tipping events.
\MN{As pointed out by a referee, our focus is on temperature increases even though a large negative change in temperature would also have substantial negative impacts. The general idea that pre-industrial temperature is ``optimal'' is likely less a consequence of the specific temperature than the fact that nature and humans have adapted to it over a long period of time.}
}
This is consistent with the quantitative estimates of climate impact on welfare in the literature. Figure~1 in \citep{Tol.18} compares 27 such estimates and suggests that welfare-equivalent income is a concave function of temperature.\footnote{\cite{Tol.18} emphasizes the uncertainty in these estimates while highlighting that ``impacts of climate change are typically found to be more than linear'' and that the  uncertainty is skewed towards negative surprises.}

Our model postulates an emissions tax introduced by a regulator. This stylized mechanism more generally stands in for potential adverse impacts on firms with larger emissions, such as costs due to additional environmental rules or reputational and legal risks that affect firms in the long run.\footnote{\MN{We phrase the cost of carbon as a tax since this leads to straightforward mechanics in the model. A carbon certificate market where the supply of certificates is controlled by the regulator would lead to similar results: if the firms start with no certificates and need to purchase certificates proportional to the carbon output they} \MN{committed to in the first period, they are price takers in the certificate market. The clearing price is an inverse function of certificate supply and can thus be set indirectly by the regulator.}}
 Specifically, the tax per unit of good produced with technology $r$ is $bT\alpha r$; here $\alpha r$ is the marginal increase in temperature caused by the good (after choosing units appropriately) and the tax rate $bT$ is proportional to the total temperature anomaly.\footnote{\MN{The linear schedule in our setup does not necessarily represent an ``optimal'' choice from the regulator's point of view, but rather a concrete formulation for an expectation of the firms' disutility that makes our model mathematically analyzable. A real regulator would also face a complex decision problem under uncertainty with various trade-offs. In particular, the regulator will want to reduce carbon emissions effectively but also retain flexibility to react to unforeseen developments, as emphasized in \citep{jakob2014optimal}.}}

 The taxes on the total production then internalize  society's disutility from warming if we postulate that the latter is quadratic and separable---the simplest form consistent with increasing marginal disutility.\footnote{The constant $b$ could, in principle, be calibrated to the social cost of warming. However, our stylized model is built to highlight the mechanics of heterogeneous beliefs and not expected to yield good quantitative predictions.}
The tax rate is \emph{endogenous} as it is proportional to the temperature increase~$T$, and uncertainty about the TCRE $\alpha$ implies uncertainty about taxes. Alternately, as warming affects firms only via taxes, we may think of firms as having uncertainty about future regulation per se rather than climate. %
Our modeling of uncertainty and impacts is inspired by~\citep{BarnettBrockHansen.20}, but is even more simplified in order to yield a tractable framework for an equilibrium model.

While closed-form solutions for the equilibrium are available in special cases (see Section~\ref{se:examples}), this model is sufficiently tractable to allow for a detailed analysis of the equilibrium properties in all cases. %
We are particularly interested in comparative statics for firms' choices and aggregate emissions.
As production quantity and carbon interact with the constraints of the model, we find it useful to form buckets of firms with comparable equilibrium technology choices: green firms emit no carbon ($r=0$), red firms make no effort to reduce emissions ($r=1$), and orange firms are intermediate. This color is endogenous, but conditionally on the color, closed-form feedback expressions for the optimal choices are readily available. The optimal production quantity and emissions are substitutes; that is, any firms' optimal choices are decreasing in the total quantity and carbon produced by the other firms. However, if the  total quantity and carbon are varied in opposite directions, the reaction of the firm is ambiguous, with the direction depending on its color. Remarkably, the equilibrium is nevertheless unique; this is shown by an analysis of the interactions between the buckets. Key features of the equilibrium are:

\begin{enumerate}
\item[1.] \emph{Uncertainty is equivalent to higher expected impact.} As far as equilibrium outcomes are concerned, the second moment $\alpha_{i}^{2}:=E_{i}[\alpha^{2}]$ is a sufficient statistic for firm $i$'s belief about climate impacts (or about taxes). A firm acknowledging variance of carbon impact takes the same optimal decisions as one that assumes a known but increased impact.\footnote{This behavior is consistent with the finding that the presence of uncertainty warrants a higher level of climate change mitigation in various contexts; see, e.g., \citep{BergerEmmerlingTavoni.17, BrockHansen.17, Nordhaus.18}.}\footnote{\MN{We could replace  the fixed parameter $b$ with a firm-dependent risk-aversion parameter $b_i$ in our model. This would be redundant in terms of outcomes as $\alpha_i^2$ is already firm-dependent and only enters the equilibrium in the form $\beta_i=b\alpha_i^2$ (which would be modified to $\beta_i=b_i\alpha_i^2$). We therefore choose to use the fixed parameter $b$ to make the interpretation of taxes as a tool to internalize society's disutility more straightforward.}}

\item[2.]  \emph{Higher expected impact implies higher mitigation effort.} For a given firm $i$, the equilibrium technology choice $r_{i}$ is monotone decreasing in $\alpha_{i}^{2}$, if other firms' beliefs are fixed.

\item[3.]  \emph{Mitigation efforts act as strategic substitutes.} For a given firm $i$ with fixed $\alpha_{i}^{2}$, the equilibrium technology choice $r_{i}$ is monotone increasing in $\alpha_{j}^{2}$ for all $j\neq i$. That is, firm~$i$'s mitigation efforts increase if other firms' efforts decrease, and vice versa.
\end{enumerate}

The aggregate carbon emissions are decreasing with respect to the climate beliefs $\alpha_{i}^{2}$ of all firms, as one would expect, but other comparative statics reveal the richer interactions between emissions, production and constraints, both for individual firms and in the aggregate. Strategic substitutes are consistent with the mechanics of a standard Cournot model without technology choice, where one firm's decrease of emissions and production would entail an increase of emissions and production for the other firms. In the present model, technology choice gives an alternate way to control emissions. For instance, Example~\ref{ex:allInt} shows that if firms all have moderate beliefs, firms adapt to changes in beliefs by changing technology while holding production quantities constant. In general, the equilibrium production quantity of a given company depends ambiguously on the beliefs in the economy: If a red firm $j$ grows more concerned about climate impacts, $j$ reduces its emissions and hence, a fortiori, its production quantity. Other firms then face less competition and increase their quantity. If $j$ is orange, however, it reduces emissions by changing technology rather than quantity. Red firms then increase their production since the marginal cost of carbon has decreased, but green and orange firms decrease their production as a reaction to the competition from red firms. A detailed discussion of all comparative statics can be found in the main text.

We also study an iterated version of the game where the total carbon in the environment accumulates and firms can update their beliefs about the TCRE. In an example where firms asymptotically learn (and agree on) the true value of the TCRE, the total temperature change  converges to the quotient of the extra cost $d$ for the green technology and the tax rate $b\alpha$. 

The remainder of this paper is organized as follows. Section~\ref{se:equilibrium} develops the model and its equilibrium. In Section~\ref{se:examples} we discuss special cases with closed-form solutions, giving first insights.  Section~\ref{se:compStat} presents the qualitative comparative statics.  The repeated game is discussed in Section~\ref{se:dynamics}, and Section~\ref{se:conclusion} concludes. Appendix~\ref{se:detailsEquilibrium} contains the proofs for Section~\ref{se:equilibrium} and a more detailed mathematical description of the equilibrium. Appendix~\ref{se:appendixExamples} elaborates on the examples of Section~\ref{se:examples}. In Appendix~\ref{se:compStatDetails} we derive quantitative comparative statics which imply, in particular, the qualitative comparative statics summarized in Section~\ref{se:compStat}. Appendix~\ref{se:appendixDynamics} contains the proofs for Section~\ref{se:dynamics}. Finally, Appendix~\ref{se:otherUtility} discusses more general utility functions for consumers.

\section{Equilibrium}\label{se:equilibrium}

Let $r_{i}\in [0,1]$ be the technology and $q_{i}\in \R_{+}$ the production quantity chosen by firm~$i$. The corresponding carbon emission is $k_{i}=r_{i}q_{i}$ where we choose units so that the business-as-usual technology $r=1$ corresponds to one unit of carbon per unit of good. Note that given $q_{i}$, we may equivalently specify $r_{i}$ or $k_{i}$.\footnote{The convention that $r_{i}=0$ when $q_{i}=0$ is used for the boundary case.} In addition to the emissions of the firms, we also include an exogenous amount $K_{ex}$ of carbon which may account for emitters outside the economy or pre-existing emissions. Thus, the total carbon is $K=K_{ex}+\sum_{j=1}^{n} r_{j}q_{j}$ and the total supply is $Q=\sum_{j=1}^{n} q_{j}$. 

To analyze the equilibrium, we first derive the optimality conditions for a fixed firm~$i$ given the quantity and carbon from sources other than firm~$i$, denoted $Q_{-i}=Q-q_{i}$ and $K_{-i}=K-k_{i}$. As mentioned in the Introduction, the net unit price (i.e., the revenue per unit of good net of taxes) for firm $i$ is
$$
  p_{i} = u'(Q) - bT\alpha r_{i}
$$
were $u: \R_{+}\to\R$ is a utility function, $\alpha$ is the TCRE and $T=\alpha K$ is the temperature increase. We assume that the choice $r=1$ with the highest emission has a unit cost of $c>0$ whereas the zero-emission technology bears a premium of $d>0$. The total production cost for a quantity $q$ at technology $r\in[0,1]$ is
$
  C(r,q)=[c + (1-r)d]q. %
$
In summary, the profit for the choices $(r_{i},q_{i})$ is
$$
  \pi_{i}(r_{i},q_{i}) = p_{i}q_{i} - C(q_{i},r_{i}) = u'(Q)q_{i} - b\alpha T r_{i}q_{i} - [c+(1-r_{i})d] q_{i}.
$$
Each firm $i$ has a belief about the distribution of $\alpha$. We denote by
$$
  \alpha_{i}^{2} = E_{i}[\alpha^{2}]
$$
the second moment of $\alpha$ under firm $i$'s belief. In most of the paper we endow consumers with the quadratic utility 
$u(x)=-\frac12 (A-x)^{2}$ for $x\in [0,A]$, where $A>0$ (and $u(x)=0$ for $x>A$);
see Appendix~\ref{se:otherUtility} for more general utility functions. In other words, the inverse demand $u'(x)=(A-x)^{+}$ is affine and the expected profit of firm $i$ under its subjective belief takes the form
\begin{equation}\label{eq:expectedProfitQuadratic}
E_{i}[\pi_{i}(r_{i},q_{i})] 
  =(A-q_{i}-Q_{-i})q_{i} - b\alpha_{i}^{2} (r_{i} q_{i} + K_{-i})r_{i} q_{i} - (c+d-dr_{i})q_{i}
\end{equation}
as long as $q_{i}+Q_{-i}\in[0,A]$. 
A (Nash) equilibrium is defined as a profile $(r_{j},q_{j})_{1\leq j\leq n}$ such that $(r_{i},q_{i})$ maximizes firm $i$'s expected profit~\eqref{eq:expectedProfitQuadratic} given $Q_{-i}=\sum_{j\neq i} q_{j}$ and $K_{-i}=K_{ex}+\sum_{j\neq i} r_{j}q_{j}$, for every $1\leq i\leq n$.

\begin{remark}\label{rk:onlySecondMoment}
  As the beliefs only affect the equilibrium through the expected profits~\eqref{eq:expectedProfitQuadratic}, the second moment $\alpha_{i}^{2}=E_{i}[\alpha^{2}]$  is a sufficient statistic for firm $i$'s views about~$\alpha$. The relation
  $
    E_{i}[\alpha^{2}]=\Var_{i}(\alpha)+E_{i}[\alpha]^{2}
  $
  shows that an increase in variance affects the equilibrium in the same manner as if the firm had a larger expected value: acknowledging uncertainty about $\alpha$ is equivalent to expecting a larger TCRE.
  
  In this spirit, one may replace~$b$ by a firm-dependent constant~$b_{i}$ which can be interpreted as a risk-aversion parameter, similarly as in Markowitz' problem. Setting~$\beta_{i}=b_{i}\alpha_{i}^{2}$ instead of $\beta_{i}=b\alpha_{i}^{2}$ in~\eqref{eq:abbreviations} below, the formulas in our results then continue to hold as stated.
\end{remark}

Given exogenous quantity $Q_{-i}$ and carbon $K_{-i}$, firm $i$ has a unique optimal choice $(r_{i},q_{i})$ which, however, is somewhat complicated to state because the choice is two-dimensional and subject to several constraints. We provide a detailed description in Appendix~\ref{se:detailsEquilibrium} and confine ourselves to an informal version in the main text, highlighting some of the key features. 
To facilitate the exposition we introduce the following color-coding. Firm $i$ is called \emph{white} if it does not produce ($q_{i}=0$). For the case of a positive production, we distinguish three cases: firm $i$ is \emph{green} if it produces exclusively with the emission-free technology ($r_{i}=0$ and $q_{i}>0$), \emph{red} if it produces exclusively with the business-as-usual technology ($r_{i}=1$), and \emph{orange} if uses an intermediate technology ($0<r_{i}<1$). 
The color captures which of the constraints (nonnegative production, technology between~$0$~and~$1$) are binding. The color itself depends on the belief, $Q_{-i}$ and $K_{-i}$, but once the color is determined, the optimal choice has a simple expression as stated below. The following definitions will be useful to obtain concise expressions, here and in the rest of the paper:
\begin{equation}\label{eq:abbreviations}
  \beta_{i}=b\alpha_{i}^{2}, \qquad a_{i}=\frac{d}{\beta_{i}}, \qquad z=A-c-d.
\end{equation}
Indeed, the second moment $\alpha_{i}^{2}$ of firm $i$'s belief on the TCRE can only occur via its product $\beta_{i}$ with the constant $b$ in the tax rate; cf.~\eqref{eq:expectedProfitQuadratic}. For green and orange firms, choices are tradeoffs between the extra cost $d$ of the green technology and $\beta_{i}$, which suggests the definition of $a_{i}$. A large value of $a_{i}$ corresponds to the view that taxes will be low or that mitigation is costly, so that firms with higher $a_{i}$ will make smaller mitigation efforts. Finally, the difference $z$ between the maximal demand $A$ and the unit cost $c+d$ of the green technology is clearly an important quantity for the mitigation efforts.

\begin{proposition}\label{pr:FOCbody}
  Given exogenous quantity $Q_{-i}$ and carbon $K_{-i}$, the color of firm $i$ is uniquely determined and the optimal choices are as follows. If firm $i$ is
\begin{enumerate}
  \item white, then $q_{i}=0$ and $k_{i}=0$.
  
  \item green, then $q_{i}=\frac12 (z - Q_{-i})$ and $k_{i}=0$.
  
  \item orange, then $q_{i}=\frac12(z - Q_{-i})$ and $k_{i}=\frac12(a_{i}-K_{-i})$.
  
  \item red, then
    $q_{i}=k_{i}=\frac12\frac{1}{1+\beta_{i}} [ A-c - Q_{-i} - \beta_{i} K_{-i}]$.
\end{enumerate}
\end{proposition}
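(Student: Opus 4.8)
The plan is to pass to the variables $(q_i,k_i)$ with $k_i=r_iq_i$, in which the objective is separable and strictly concave, and to read off the four colors from where the (unique) maximizer sits relative to the constraints. With $z=A-c-d$, substituting $r_i=k_i/q_i$ into \eqref{eq:expectedProfitQuadratic} gives
\begin{equation*}
  \Phi(q_i,k_i)\;=\;\underbrace{(z-Q_{-i})q_i-q_i^2}_{=:f(q_i)}\;+\;\underbrace{-\beta_ik_i^2+(d-\beta_iK_{-i})k_i}_{=:g(k_i)},
\end{equation*}
while the constraints $q_i\ge 0$, $r_i\in[0,1]$ become the convex cone $D=\{(q,k):0\le k\le q\}$. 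If $Q_{-i}\ge A$, then for any $q_i>0$ demand is saturated and the expected profit equals $-\beta_i(r_iq_i+K_{-i})r_iq_i-[c+(1-r_i)d]q_i<0$, whereas $q_i=0$ yields $0$; hence firm $i$ is white. For $Q_{-i}<A$ formula \eqref{eq:expectedProfitQuadratic} is valid on $\{q_i+Q_{-i}\le A\}$, and I would maximize $\Phi$ over all of $D$, checking afterwards from the explicit formulas that the maximizer satisfies $q_i^\circ<A-Q_{-i}$ (one uses $\tfrac12(A-c-d-Q_{-i})<A-Q_{-i}$, the red value being smaller still), so that no generality is lost.

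Next I would establish existence and uniqueness. Since $f''=-2<0$ and $g''=-2\beta_i\le 0$, the map $\Phi$ is concave, strictly concave in $q_i$, and coercive on $D$ (it tends to $-\infty$ as $\|(q,k)\|\to\infty$ within $D$); with $D$ convex this gives a unique maximizer $(q_i^\circ,k_i^\circ)$ (for $\beta_i>0$ the Hessian is negative definite; the case $\beta_i=0$ is handled directly as noted below). I would then \emph{define} the color of firm $i$ from this maximizer: white if $q_i^\circ=0$, and otherwise green, orange, or red according as $k_i^\circ=0$, $0<k_i^\circ<q_i^\circ$, or $k_i^\circ=q_i^\circ$. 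These four alternatives are mutually exclusive and exhaustive for a point of $D$, which is exactly the assertion that the color is uniquely determined.

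It remains to evaluate $(q_i^\circ,k_i^\circ)$ in each case, which is immediate once the color fixes the active constraints. For white there is nothing to show. If firm $i$ is green, the maximizer lies on the edge $\{k=0,\ q>0\}$, so $q_i^\circ=\argmax_{q>0}f(q)=\tfrac12(z-Q_{-i})$. If it is orange, the maximizer is in the interior of $D$, hence $\nabla\Phi=0$; solving $f'=0$ and $g'=0$ gives $q_i^\circ=\tfrac12(z-Q_{-i})$ and $k_i^\circ=\tfrac12(a_i-K_{-i})$ with $a_i=d/\beta_i$. If it is red, the maximizer lies on $\{k=q>0\}$; substituting $k=q$ and using $z+d=A-c$, $\Phi$ reduces to the strictly concave quadratic $[A-c-Q_{-i}-\beta_iK_{-i}]\,q-(1+\beta_i)q^2$, whose maximizer over $q>0$ is $\tfrac12\tfrac{1}{1+\beta_i}[A-c-Q_{-i}-\beta_iK_{-i}]$. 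Finally $r_i^\circ=k_i^\circ/q_i^\circ$ recovers the technology.

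I expect the main difficulty to be bookkeeping rather than any single estimate: carefully treating the faces and corners of $D$ in the first-order conditions, handling the domain restriction $q_i+Q_{-i}\le A$ cleanly, and---for the sharper appendix version, not needed for the statement above---checking that the $(Q_{-i},K_{-i})$-regions producing each color are exactly those cut out by the natural inequalities (e.g. orange iff $0<a_i-K_{-i}<z-Q_{-i}$), together with the degenerate case $\beta_i=0$ (where $g$ is linear increasing in $k_i$, so firm $i$ is never green or orange and the red formula still applies with $\beta_i=0$). For the proposition as stated, uniqueness of the maximizer already makes the color well defined, and the case-by-case first-order conditions above yield the four formulas.
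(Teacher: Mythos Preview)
Your approach is essentially the paper's: both pass to $(q_i,k_i)$, use strict concavity on the simplex $\{0\le k\le q\}$ to get a unique maximizer, and then read off the four cases from which constraint is active. Your presentation is slightly more streamlined in that you exploit the separability $\Phi(q,k)=f(q)+g(k)$ throughout, whereas the paper, after noting the same separable form, reverts to $(r,q)$ coordinates and computes $q_i(r)$ before differentiating in~$r$; the trade-off is that the paper's parametrization makes the explicit $(Q_{-i},K_{-i})$-regions for each color (needed in the appendix version) fall out more directly.

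One small slip: your parenthetical ``the red value being smaller still'' is false in general---for small $\beta_i$ and $K_{-i}$ the red quantity $\tfrac{1}{2(1+\beta_i)}(A-c-Q_{-i}-\beta_iK_{-i})$ can exceed $\tfrac12(z-Q_{-i})$. This does not damage the argument, since what you actually need is $q_i^\circ<A-Q_{-i}$, and that follows directly from $q_i^\circ\le\tfrac12(A-c-Q_{-i})<A-Q_{-i}$ in the red case (cf.\ the paper's Remark~\ref{rk:aPrioriBound}(b)); just replace the comparison with this direct bound.
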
 

For all colors, $q_{i}$ and $k_{i}$ are weakly decreasing functions of $Q_{-i}$ and $K_{-i}$; that is, quantity and emissions act as substitutes. For white, green and orange firms, $q_{i}$ depends only on $Q_{-i}$ and $k_{i}$ depends only on $K_{-i}$. Whereas for red firms, $q_{i}$ and $k_{i}$ depend jointly on $Q_{-i}$ and $K_{-i}$, and moreover the precise coupling between the two depends on the specific belief of the firm in question. (In addition, the color of a firm depends on both $Q_{-i}$ and $K_{-i}$ and the firm's belief; cf.\ Appendix~\ref{se:detailsEquilibrium}.) 

\begin{theorem}\label{th:mainBody}
  There exists a unique equilibrium.
\end{theorem}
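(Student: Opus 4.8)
The plan is to get existence from a standard fixed‑point argument and then obtain uniqueness by showing that the whole equilibrium profile is a function of the two aggregates $Q$ and $K$, and that this pair is uniquely determined.

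\emph{Existence.} It is cleaner to let firm $i$ choose $(q_i,k_i)$ subject to $0\le k_i\le q_i$ (so $r_i=k_i/q_i\in[0,1]$). In these coordinates the expected profit \eqref{eq:expectedProfitQuadratic} becomes $(A-q_i-Q_{-i})q_i-\beta_i(k_i+K_{-i})k_i-(c+d)q_i+dk_i$ on $\{q_i+Q_{-i}\le A\}$, which is jointly concave in $(q_i,k_i)$; since a firm never produces when $Q_{-i}\ge A$ (revenue is then zero while costs are positive), one may restrict to the compact convex box $\{0\le k_i\le q_i\le\tfrac12(A-c)\}$ without changing best responses. Strict concavity makes the best response unique, hence continuous in $(Q_{-i},K_{-i})$ by Berge's maximum theorem, so the joint best‑response map is a continuous self‑map of a compact convex set and Brouwer's theorem yields an equilibrium.

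\emph{Reduction to $(Q,K)$ and uniqueness for a fixed colour configuration.} Fix an equilibrium with aggregates $Q$ and $K$. Substituting $Q_{-i}=Q-q_i$, $K_{-i}=K-k_i$ into Proposition~\ref{pr:FOCbody} and solving shows: every green or orange firm produces $q_i=z-Q$; green firms have $k_i=0$, orange firms $k_i=a_i-K$; every red firm has $q_i=k_i=(A-c-Q-\beta_iK)/(1+\beta_i)$; white firms have $q_i=k_i=0$. Moreover the colour of each firm is itself determined by $(Q,K)$: when $z-Q>0$, firm $i$ is green iff $a_i\le K$, orange iff $K<a_i<K+(z-Q)$, and otherwise red or white according to the sign of $(A-c-Q-\beta_iK)/(1+\beta_i)$; the case $z-Q\le0$ is analogous with no green or orange firms. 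Thus the equilibrium is a function of $(Q,K)$, so it suffices to show $(Q,K)$ is unique. If one first \emph{fixes} the colour configuration, with $\cR$ the set of red firms and $m$ the number of green‑and‑orange firms, and sets $Q_R=\sum_{i\in\cR}q_i$, then the above formulas give $Q=(mz+Q_R)/(m+1)$ and $K=(K_{ex}+\sum_{i\ \text{orange}}a_i+Q_R)/(|\text{orange}|+1)$, both strictly increasing affine in $Q_R$; inserting these into $Q_R=\sum_{i\in\cR}(A-c-Q-\beta_iK)/(1+\beta_i)$ yields an equation $Q_R=\Theta(Q_R)$ with $\Theta$ strictly decreasing, which has at most one solution. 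Hence at most one equilibrium is compatible with any given colour configuration.

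\emph{The aggregate best response is antitone.} Define $\Psi(Q,K)=\bigl(\sum_i q_i,\ K_{ex}+\sum_i k_i\bigr)$, where $(q_i,k_i)$ is the unique self‑consistent solution of ``firm $i$ best responds to $(Q-q_i,K-k_i)$'' — unique because that map is, on each colour cell, an affine contraction (operator norm $\tfrac12$ in the sup norm) and is globally continuous. From Proposition~\ref{pr:FOCbody} one verifies that each such $(q_i,k_i)$, hence $\Psi$, is continuous and componentwise non‑increasing in $(Q,K)$. Equilibrium aggregates are exactly the fixed points of $\Psi$, so if $(Q,K)\le(Q',K')$ were two of them, then $(Q,K)=\Psi(Q,K)\ge\Psi(Q',K')=(Q',K')$, forcing equality; therefore any two distinct equilibrium aggregates must be \emph{incomparable}, say $Q<Q'$ and $K>K'$.

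\emph{The main obstacle: ruling out incomparable equilibria.} This is the heart of the proof and is where the ``bucket'' structure is genuinely used. The relevant inputs are that the green/orange quantity $z-Q$ and the orange emission $a_i-K$ each react to only one of the two aggregates, that red firms satisfy $q_i=k_i$, and that the colour thresholds $a_i\le K$, $K<a_i<K+(z-Q)$, etc., move monotonically with $(Q,K)$; so as one passes from the hypothetical pair $(Q,K)$ to $(Q',K')$ each firm's colour shifts in a controlled way along the sorted list of the $a_i$'s. Tracking these colour changes bucket by bucket and summing the resulting changes in $q_i$ and $k_i$, one contradicts the fixed‑point identities $Q'=\sum_i q_i'$ and $K'=K_{ex}+\sum_i k_i'$. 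Carrying out this finite but delicate case analysis — together with the precise description of the colour cells it relies on — is what is deferred to Appendix~\ref{se:detailsEquilibrium}, and I expect it to be the most technical step; a Rosen‑type diagonal‑strict‑concavity shortcut is not available here, since the heterogeneous coefficients $\beta_i$ prevent the pseudo‑gradient from having a negative‑definite symmetrisation under any single per‑player weighting.
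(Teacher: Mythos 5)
Your existence argument and your ``antitone aggregate'' step are essentially the paper's: Brouwer on the box $[0,(A-c)/2]^{2n}$ for existence, and the observation that best responses are nonincreasing in $(Q,K)$, which forces any two \emph{comparable} equilibrium aggregates to coincide (the paper's Lemma~\ref{le:antimonotone}). The per-colour-configuration uniqueness via the decreasing map $\Theta$ is correct but, as you note yourself, does not settle the question, since different configurations could a priori host different equilibria.

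The genuine gap is your last paragraph: ruling out \emph{incomparable} equilibria ($K'\geq K$ but $Q'<Q$) is exactly the hard part of the theorem, and you do not prove it --- you describe it as ``a finite but delicate case analysis'' and defer it. The ingredients you list (red/white firms have $q_i=k_i$, orange emissions $a_i-K$ depend only on $K$, colour thresholds move monotonically) are indeed the right ones, but they must be assembled into a specific chain, and that chain is the content of the paper's Lemmas~\ref{le:groupFeedback}--\ref{le:upcrossingUniqueness}: labelling the equilibria so that $K'\geq K$, one first shows no firm can move \emph{into} the bucket $I^{q}_0\cup I_1$ (each potential violation contradicts either the defining inequalities or Lemma~\ref{le:antimonotone}); then every green/orange firm stays green/orange and its carbon weakly decreases, so the entire carbon increase must originate from the group $H=I^q_0\cup I_1$, where $K_H=Q_H$ and $Q'_H\geq K'_H$, giving $Q'_H\geq Q_H$; finally the green/orange group compensates a change in $Q_H$ only by the factor $\tfrac{m}{m+1}<1$ (Lemma~\ref{le:groupFeedback}), so $Q'-Q=\tfrac{1}{m+1}(Q'_H-Q_H)\geq 0$, which puts you back in the comparable case. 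Without this (or an equivalent) argument, ``tracking colour changes bucket by bucket'' is only a plan, not a proof: it is not clear a priori that the case analysis closes, and indeed the possibility of an equilibrium with smaller quantity but larger carbon is precisely what the paper flags as the non-obvious obstruction. So the proposal is incomplete at its central step, even though the intended route is compatible with the paper's.
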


The proof of existence in Appendix~\ref{se:detailsEquilibrium} applies Brouwer's fixed point theorem in a fairly direct manner. Uniqueness is less obvious and the proof may be of interest on its own. While strategic substitutes generally imply uniqueness in the case of a one-dimensional control variable, this is not necessarily the case in a problem with two interacting controls---a priori, it may be possible to have an alternative equilibrium with smaller quantity but larger emissions. One key step in our proof is to exhibit a transversality relation between the color buckets (Lemma~\ref{le:antimonotone}): if green and red firms increase their production quantity, the orange firms would react by partially, but not fully, compensating that increase. Conversely, a change caused by orange firms would be over-compensated by the other firms.

Not all color combinations can arise in equilibrium: green and orange firms cannot co-exist with white ones; i.e., an equilibrium consist either of green, orange and red firms; or of white and red firms. (Some of these buckets may be empty; for instance, all firms can be orange.) As is intuitive, these colors are ordered in terms of climate beliefs: In the green-orange-red case, the green firms are the ones expecting the highest climate impacts (the highest taxes) and the red ones expect to lowest. In the white-red case, the white firms expect the higher impacts. See Appendix~\ref{se:detailsEquilibrium} for more details.

\section{Examples}\label{se:examples}

In this section we exhibit special cases with closed-form solutions that give more insight into the mechanics of the equilibrium.
The proofs boil down to verifying the optimality conditions for all firms; this is straightforward (and omitted) for Section~\ref{se:twoFirms}, whereas for Section~\ref{se:moderateDisagreement} we report proofs in Appendix~\ref{se:appendixExamples}.

\subsection{Two Firms}\label{se:twoFirms}

The case of two firms ($n=2$) is particularly simple as only two of the color buckets can be populated. For simplicity, we also assume that $K_{ex}=0$ and $z>d$---the latter eliminates the possibility of white firms; see Appendix~\ref{se:appendixExamples} for a more complete analysis.\footnote{The case $z\leq d$ gives rise to an additional regime (white-red) when one firm believes in high climate impacts and the other is very skeptical, and some additional restrictions in the other regimes.}   Without loss of generality, we label the firms such that $0\leq a_{1}\leq a_{2}$. 
Depending on the parameters $a_{1}$ and $a_{2}$, the equilibrium is in one of the six regimes listed below. These regimes are shown in Figure~\ref{fi:twoFirmsMain} above the diagonal ($a_{1}=a_{2}$), whereas the symmetric cases below the diagonal correspond to $a_{1}\geq a_{2}$. For instance, starting at the center of the diamond and moving north corresponds to fixing firm 1 and increasing $a_{2}$, meaning that firm~2 becomes more skeptical about climate impacts. As the heterogeneity increases, firm~2 reduces mitigation efforts and eventually abandons them (becomes red), but continues to increase emissions by increasing the production quantity. As a reaction, firm~1 increases mitigation efforts and eventually becomes green.

\begin{figure}[thb]
\begin{center}
\begin{tikzpicture}[scale=.82]%
\fill [color = myorange, opacity=0.9] (0,0) -- (2,4) -- (6,6) -- (4,2) -- cycle;
\fill [pattern = flexcheckerboard_greenorange, opacity=0.9] (0,0) -- (2,4) -- (0,4) -- cycle;
\fill [pattern = flexcheckerboard_greenorange, opacity=0.9] (0,0) -- (4,2) -- (4,0) -- cycle;
\fill [color = myred, opacity=0.9] (6,6) -- (6,8) -- (8,8) -- (8,6) -- cycle;
\fill [pattern = flexcheckerboard_greenred, opacity=0.9] (4,0) -- (4,2) --  plot [smooth,domain=2:2.91] ({20*\x/(16-3*\x)},\x) -- (8,0) -- cycle;
\fill [pattern = flexcheckerboard_orangered, opacity=0.9] (4,2) --  plot [smooth,domain=2:2.91] ({20*\x/(16-3*\x)},\x) -- (8,6) -- (6,6) -- cycle;

\fill [pattern = flexcheckerboard_greenred, opacity=0.9] (0,4) -- (2,4) --  plot [smooth,domain=2:2.91] (\x,{20*\x/(16-3*\x)}) -- (0,8) -- cycle;
\fill [pattern = flexcheckerboard_orangered, opacity=0.9] (2,4) --  plot [smooth,domain=2:2.91] (\x,{20*\x/(16-3*\x)}) -- (6,8) -- (6,6) -- cycle;

\draw[->] (0,0) -- (8.5,0) node[right] {$a_1$};
\draw[->] (0,0) -- (0,8.5) node[above] {$a_2$};

\draw (0,0) -- (4,2) -- (6,6) {};
\draw (0,0) -- (2,4) -- (6,6) {};

\draw (4,2) -- (4,-0.1) node[below] {$\tfrac{2}{3}z$};
\draw (2,4) -- (-0.1,4) node[left] {$\tfrac{2}{3}z$};

\draw (6,6) -- (6,8) {};
\draw (6,6) -- (8,6) {};

\draw [smooth,domain=2:2.9] plot (\x,{20*\x/(16-3*\x)});
\draw [smooth,domain=2:2.9] plot ({20*\x/(16-3*\x)},\x);

\draw (6,0) -- (6,-0.1) node[below] {$z$};
\draw (0,6) -- (-0.1,6) node[left] {$z$};

\draw (2,0) -- (2,-0.1) node[below] {$\tfrac{z}{3}$};
\draw (0,2) -- (-0.1,2) node[left] {$\tfrac{z}{3}$};

\end{tikzpicture}
\end{center}
\caption{Regimes for the equilibrium with $n=2$ firms. Coloring represents the colors of the two firms in the respective regime.}
\label{fi:twoFirmsMain}
\end{figure}
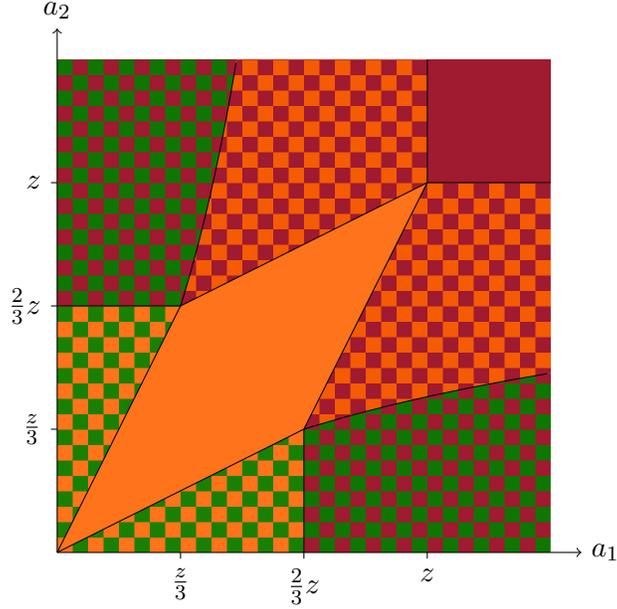

\begin{enumerate}[(a)]
\item \emph{Orange-orange.} Suppose that $a_{1}>a_{2}/2$ and $a_{2}<(z+a_{1})/2$. Then both firms are orange, 
$$
  Q=\frac{2z}{3},\quad K = \frac{a_{1}+a_{2}}{3},\quad q_{1}=q_{2}=\frac{z}{3}, \quad k_{1}=\frac{2a_{1}-a_{2}}{3},\quad k_{2}=\frac{2a_{2}-a_{1}}{3}.
$$
This regime is ``interior'' in that no constraint is binding. It arises when the coefficients $a_{1}$ and $a_{2}$ are neither too small nor too large and moreover the heterogeneity (i.e., the fraction $a_{2}/a_{1}$) is not too large.

\item \emph{Green-orange.} Suppose that $a_{1}\leq a_{2}/2$ and $a_{2}<2z/3$. Then firm 1 is green and firm 2 is orange. We have
$$
  Q=\frac{2z}{3},\quad K = \frac{a_{2}}{2},\quad q_{1}=q_{2}=\frac{z}{3}, \quad k_{1}=0,\quad k_{2}= \frac{a_{2}}{2}.
$$
This equilibrium is similar to the previous one but firm 1 expects climate impacts so large that it only uses emission-free technology. The quantities $q_{j}$ remain identical and all expressions remain affine.

\item \emph{Red-red.} Suppose that $a_{1}\geq z$. Then both firms are red and
$$
  Q=K=\frac{A-c}{3}\Big(\frac{1}{1+\beta_{1}}+\frac{1}{1+\beta_{2}}\Big), \quad q_{1}=k_{1}=\frac{A-c}{3}\Big(\frac{2}{1+\beta_{1}}-\frac{1}{1+\beta_{2}}\Big)
$$
and symmetrically $q_{2}=k_{2}=\frac{A-c}{3}\big(\frac{2}{1+\beta_{2}}-\frac{1}{1+\beta_{1}}\big)$. In this regime neither firm is sufficiently incentivized to use mitigate emissions. Individual as well as aggregate quantities depend explicitly on the beliefs of both firms.

\item \emph{Green-red.} Suppose that $a_{1}\leq \frac{(z+2d)a_{2}}{3a_{2}+4d}$ and $a_{2}\geq 2z/3$. Then firm 1 is green and firm 2 is red. We have $k_{1}=0$ and 
$$
  Q=\frac{2(1+\beta_{2})z+d}{3+4\beta_{2}}, \quad K=\frac{z + 2d}{3 + 4\beta_2}=q_{2}=k_{2}, \quad q_{1}=\frac{(1+2\beta_2)z - d}{3 + 4\beta_2}.
$$
This is the regime of extreme disagreement, firm 1 is emission-free whereas firm 2 makes no effort to reduce carbon. The formulas depend on the belief of the red firm (firm 2). For any given value of $a_{1}$, Firm~1 will be green if Firm~2 is sufficiently skeptical.

\item \emph{Orange-red.} Suppose that $\frac{(z+2d)a_{2}}{3a_{2}+4d}<a_{1}<z$ and $a_{2}\geq (z+a_{1})/2$. Then firm 1 is orange and firm 2 is red. We have
$$
Q = \frac{A-c - d\frac{a_1}{2a_2}-z/2}{3(1+\beta_2)}+\frac{z}{2},\quad K=Q+\frac{a_{1}-z}{2}, \quad q_{1}=z-Q,\quad q_{2}=k_{2}=2Q-z
$$
and $k_{1}=\frac{a_{1}+z}{2}-Q$.
This a regime of intermediate disagreement where firm~1 makes some effort but firm~2 makes no effort to reduce carbon. The formulas depend on the beliefs of both firms.

\item \emph{Green-green.} The corner equilibrium $a_{1}=a_{2}=0$ corresponds to the limiting case where both firms fear infinite climate impacts. Both firms emit zero carbon while producing the common quantity $q_{1}=q_{2}=z/3$. This regime would occur for a larger range of coefficients if we had allowed for exogenous carbon, $K_{ex}>0$.
\end{enumerate}

\begin{remark}\label{rk:boundaries}
As visualized in Figure~\ref{fi:twoFirmsMain}, equilibria exist in different regimes depending on the parameter values. Each regime is a subsets of $\R^{n}$ and its boundary is a piecewise smooth hypersurface of codimension 1. For our results on equilibria, it does not matter if the boundary between two regimes is seen as part of one or the other regime. For instance, if the given parameters $(a_{1},a_{2})$ are on the boundary between the orange-orange and the green-orange regime, we may see the equilibrium as part of either regime---the formulas stated in those regimes give the same result for such $(a_{1},a_{2})$.
Mathematically, the statements about the regimes are continuous and hence remain valid on the closure. This holds true for general equilibria with any number of firms.
\end{remark}

\subsection{Moderate Disagreement}\label{se:moderateDisagreement}

The following examples discuss $n$-player equilibria which are particularly tractable because either all firms make some effort to reduce carbon or no firm does. These cases exhibit at most moderate heterogeneity between the firms. %
The complexity of the equilibrium increases substantially if the constraint $r\leq1$ is binding for some (but not all) firms, as can already be seen in the above case of two firms---cf.\ the green-red and orange-red regimes.

The simplest $n$-player equilibrium arises when none of the constraints is binding; i.e., all firms are orange. Then, firms adjust for their belief through the technology choice but produce a common quantity independent of the belief. This situation occurs when the heterogeneity is sufficiently small and the coefficients are neither too small nor large. We also assume $K_{ex}=0$ to further simplify the expressions; this is not crucial.

\begin{example}[Orange]\label{ex:allInt}
  Let $z>0$ and $a_{1}\leq \cdots\leq a_{n}$ and $K_{ex}=0$. Suppose that
  \begin{equation}\label{eq:condForAllInt}
    a_{1} \geq \avg\{0,a_{2},\dots,a_{n}\}\quad \mbox{and}\quad  a_{n} \leq \avg\{z,a_{1},\dots,a_{n-1}\}.
  \end{equation}
  Then the equilibrium satisfies
  $$
    Q = \frac{nz}{n+1},\quad K= \frac{1}{n+1} \sum_{j=1}^{n} a_{j}, \quad q_{i} = \frac{z}{n+1},\quad r_{i}= \frac{1}{z}\Bigg(na_{i}-\sum_{j\neq i}a_{j}\Bigg)
  $$
  for all $1\leq i\leq n$. A sufficient condition for~\eqref{eq:condForAllInt} is that $a_{1}\geq \frac{n-1}{n}a_{n}$ and $a_{n}\leq \frac{z}{n}+\frac{n-1}{n}a_{1}$.
\end{example}

Example~\ref{ex:allInt} is a special case of the following situation where the constraint $r\geq0$ may be binding but $r\leq1$ is not. This arises when none of the firms is much more skeptical about climate change than the others, and preserves the crucial feature of Example~\ref{ex:allInt}; namely, that firms adjust their technology $r_{i}$ to account for carbon whereas the quantities $q_{i}$ are unaffected by carbon emissions and beliefs. In the subsequent example, there are $n_{0}$ green firms and $m=n-n_{0}$ orange firms, and the number $n_{0}$ is determined analytically from the beliefs.

\begin{example}[Green-orange]\label{ex:noneAtOne}
  Let $z>0$ and $a_{1}\leq \cdots\leq a_{n}$ and $K_{ex}\geq0$. Define\footnote{\label{fo:max}For the definition of $n_{0}$ we use the convention $\max\emptyset=0$.}
  $$
    n_{0}= \max\Bigg\{i:\, a_{i}< K_{ex} + \sum_{j=i+1}^{n} (a_{j}-a_{i})\Bigg\}, \quad m=n-n_{0}, \quad A_{m}=\sum_{j=n_{0}+1}^{n} a_{j}.
  $$
  Moreover, suppose that
  \begin{equation}\label{eq:condNoneAtOne}
    a_{n} \leq \frac{z}{n+1} + \frac{A_{m}+K_{ex}}{m+1}.
\end{equation}
  Then the equilibrium satisfies\footnote{In fact, \eqref{eq:condNoneAtOne} is not only sufficient but also necessary for absence of red firms.}  
  $$
    Q = \frac{nz}{n+1}, \quad K=\frac{A_{m}+K_{ex}}{m+1}, \quad q_{i} = \frac{z}{n+1}, \quad 1\leq i\leq n
  $$
  as well as $r_{i}=0$ for $1\leq i\leq n_{0}$ and 
  $r_{i}= \frac{n+1}{z} \big( a_{i} - \frac{A_{m}+K_{ex}}{m+1} \big)$ for $n_{0}<i\leq n$.
  A sufficient condition for~\eqref{eq:condNoneAtOne} is that $a_{n}\leq \frac{2z}{n+1}+K_{ex}$. 
\end{example}

Equilibria are more complicated when the constraint $r\leq 1$ is binding for at least one firm: while the green and orange firms continue to produce a common quantity, that quantity is now influenced by the views of the red firms, and each red firm may have a different quantity. This  precludes simple closed-form solutions in most cases. An exception arises when all firms are sufficiently skeptical: in the following example, the number $n_{0}$ of white firms (which cease production completely) is determined analytically from the beliefs.

\begin{example}[White-red]\label{ex:noGreenTech}
  Let $A>c$ and $a_{1}\leq \cdots\leq a_{n}$ and $K_{ex}\geq0$. Define\footnote{Footnote~\ref{fo:max} applies.} 
  $$
    \xi_{j}= \frac{A-c -b\alpha_{j}^{2}K_{ex}}{1+b\alpha_{j}^{2}}, \quad n_{0}= \max\Bigg\{i:\, \xi_{i} < \sum_{j=i+1}^{n} (\xi_{j}-\xi_{i})\Bigg\}, \quad n_{1}=n-n_{0}.
  $$  
  Suppose that $a_{1}\geq z + K_{ex}$. Then the equilibrium satisfies 
  $Q = \frac{1}{n_{1}+1}\sum_{j > n_{0}} \xi_{j}$ and $K=Q+K_{ex}$,
  as well as $q_{i}=k_{i}=0$ for $i\leq n_{0}$ and 
  $q_{i} = k_{i}=\frac{n_{1}}{n_{1}+1}\xi_{i}
     - \frac{1}{n_{1}+1} \sum_{i\neq j > n_{0}} \xi_{j}$ for $i > n_{0}$.
\end{example}

\section{Comparative Statics}\label{se:compStat}

In this section we analyze how a change in a firm's view impacts the firm's decisions, its competitors and the overall economy. 
The subjective second moment $\alpha_{j}^{2}=E_{j}[\alpha^{2}]$ of the TCRE is called the \emph{climate belief} (or simply belief) of firm~$j$; cf.\ Remark~\ref{rk:onlySecondMoment}. An increase in belief corresponds to higher expected climate impacts/taxes  whereas a decrease corresponds to the firm becoming more skeptical. %
For simplicity of exposition we assume that there are no firms with zero production quantity (white firms)---in any event, such firms do not directly affect the rest of the economy. Moreover, no firm is infinitely skeptical (i.e., $\alpha_{j}^{2}>0$ for all~$j$). Thus, firms are green (produce exclusively with zero-emission technology), orange (emit carbon with some effort to reduce emissions) or red (business-as-usual). The statements below are valid for perturbations of the climate belief that keep the equilibrium in the same regime; that is, firms do not change color during the perturbation. This is always true if the perturbation is sufficiently small.\footnote{If the initial equilibrium is on the boundary between two regimes, we use the flexibility mentioned in Remark~\ref{rk:boundaries} and define the boundary as part of a regime which is preserved by the perturbation. Due to the differentiability of the boundaries, this is always possible.} The three theorems below summarize most of the qualitative insights; they are corollaries of the more detailed, quantitative results reported in Appendix~\ref{se:compStatDetails}.
We start with the comparative statics for the overall economy.

\begin{theorem}\label{th:compStatsTotal}
  The total carbon emission $K$ is weakly decreasing in the climate beliefs of all firms:
  \begin{enumerate}
  \item $K$ is unaffected by the beliefs of green firms.
  \item $K$ is strictly decreasing in the beliefs of all other firms.
  \end{enumerate}
  The dependence of the total production quantity $Q$ is ambiguous:
  \begin{enumerate}
  \item $Q$ is unaffected by the beliefs of green firms.
  \item $Q$ is strictly decreasing in the beliefs of red firms.
  \item $Q$ is strictly increasing in the beliefs of orange firms---except if there are no red firms in the economy, 
   in which case $Q$ is unaffected by the beliefs of orange firms.
  \end{enumerate}  
\end{theorem}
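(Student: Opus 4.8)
The plan is to collapse the equilibrium, inside a fixed regime, into a small linear system for the three aggregates $Q$, $K$, and $S:=\sum_{i\in R}q_i$ (carbon/quantity of the red firms), and then read off every comparative static by implicit differentiation. Since the statements only concern perturbations that leave all colors unchanged, the partition of firms into green, orange, red buckets $G,O,R$ is fixed; I write $n_g=|G|$, $n_o=|O|$, $n_1=|R|$, $m=n_g+n_o$, $A_O=\sum_{i\in O}a_i$, and $P=\sum_{i\in R}\frac1{1+\beta_i}$, so that $\sum_{i\in R}\frac{\beta_i}{1+\beta_i}=n_1-P$ and, because $\beta_i=b\alpha_i^2>0$, one has $n_1-P>0$ iff $R\neq\emptyset$.

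First I would aggregate the first-order conditions. Using Proposition~\ref{pr:FOCbody} with $Q_{-i}=Q-q_i$ and $K_{-i}=K-k_i$, one checks that every green and every orange firm produces the \emph{same} quantity $q_i=z-Q$, that every orange firm emits $k_i=a_i-K$ while green firms emit $0$, and that every red firm satisfies $q_i=k_i=\frac{A-c-Q-\beta_i K}{1+\beta_i}$. Summing over the three buckets gives
\begin{align*}
  (1+m)\,Q&=mz+S,\\
  (1+n_o)\,K&=K_{ex}+A_O+S,\\
  S&=(A-c)P-PQ-(n_1-P)K .
\end{align*}
Eliminating $S$ and $K$ produces $Q=N/D$ with $D=(1+n_o)(1+m)+(1+n_o)P+(n_1-P)(1+m)\ge(1+n_o)(1+m)>0$, where $N$ is affine in $A_O$ (for fixed $P$) with $\partial N/\partial A_O=-(n_1-P)$, and $K=\frac{(1+m)Q+K_{ex}+A_O-mz}{1+n_o}$ is affine increasing in $Q$. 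By Theorem~\ref{th:mainBody} these formulas describe the unique equilibrium, smoothly in the beliefs, throughout the regime.

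Then I would differentiate, using that a firm's belief $\alpha_j^2$ enters this system in only one place. A \emph{green} firm's belief appears nowhere ($j\notin O$, $j\notin R$, and $m,n_o,n_1$ are fixed), so $Q$ and $K$ are unaffected. An \emph{orange} firm's belief enters only through $A_O$ via $a_j=d/(b\alpha_j^2)$, which is strictly decreasing; implicit differentiation yields $\partial Q/\partial A_O=-(n_1-P)/D$ and, using $D-(1+m)(n_1-P)=(1+n_o)(1+m+P)$, $\partial K/\partial A_O=(1+m+P)/D>0$, whence $K$ is strictly decreasing in $\alpha_j^2$ while $Q$ is weakly increasing, strictly so precisely when $R\neq\emptyset$ and constant when $R=\emptyset$. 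A \emph{red} firm's belief enters only through $P$ via $\frac1{1+\beta_j}$, strictly decreasing in $\alpha_j^2$; differentiating the third equation and using $\frac{d}{d\beta_j}(n_1-P)=-\frac{dP}{d\beta_j}$ together with $\dot S=(1+m)\dot Q=(1+n_o)\dot K$ collapses the right-hand side to $\dot P\,(A-c-Q+K)$, giving $\dot Q=\frac{(1+n_o)(A-c-Q+K)}{D}\,\dot P$ and $\dot K=\frac{1+m}{1+n_o}\dot Q$. Since a producing red firm satisfies $q_j(1+\beta_j)=A-c-Q-\beta_j K>0$, one gets $A-c-Q>0$, hence $A-c-Q+K>0$, so both derivatives carry the sign of $\dot P<0$ and $Q$, $K$ are strictly decreasing. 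Assembling the three cases gives exactly the six assertions.

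The main obstacle is the red-firm comparative statics: because both $P$ and $n_1-P$ appear in the third aggregate equation, the relevant derivative has no evident sign until one notices the cancellation that reduces the right-hand side to $\dot P\,(A-c-Q+K)$, and one then has to verify $A-c-Q+K>0$ from the fact that red firms produce. Minor extra care is needed for the degenerate subcases $m=n_o=0$ (all firms red) and $R=\emptyset$ (no red firms, where $P=0$ and $N$ ceases to depend on $A_O$). Everything else is routine once the identity ``green and orange firms all produce $z-Q$'' decouples those two buckets.
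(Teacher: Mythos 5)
Your proposal is correct and takes essentially the same route as the paper: your three aggregate equations are precisely the relations \eqref{eq:lin2}--\eqref{eq:lin4} of Proposition~\ref{pr:KQexplicit}, your $A_O$, $P$, $D$ coincide with the paper's $A_{int}$, $B_1$ and the denominator $N$, and the theorem is read off from the signs of $\partial K/\partial A_{int}$, $\partial Q/\partial A_{int}$ and the derivatives with respect to $B_1$, exactly as in Appendix~\ref{se:compStatDetails}. The only (cosmetic) difference is that the paper solves the linear system explicitly as \eqref{eq:linK}--\eqref{eq:linQ} and differentiates the closed forms, while you differentiate the system implicitly and obtain the red-firm signs from the cancellation leaving the factor $A-c-Q+K>0$ (guaranteed since producing red firms have $A-c-Q-\beta_j K>0$), which is an equivalent and slightly cleaner way to see the positivity the paper asserts from its explicit numerators.
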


The sensitivities for the total carbon are intuitive: if some firm emits carbon and becomes more concerned about climate impacts, it will reduce its emissions. As we will see below, other firms may increase their emissions in response, but the overall effect is still a reduction.

For the production quantity the situation is more complex. If a red firm becomes more concerned about climate impacts, it will reduce its emissions---and hence its quantity, as these are equal for red firms. Other firms may react with an increased production (see below), but again the overall effect is a reduction. If an orange firm becomes more concerned about climate impacts, then initially (more precisely, neglecting feedback effects from equilibrium) it would reduce its emissions by changing to a greener technology but keep its quantity constant. While other orange firms react by slightly increasing their emissions, the collection of all orange firms would still emit less in total, and leave the quantity unchanged. Red firms, however, now face an environment with lowered carbon and similar quantities from their competitors, thus increase their production, and consequently emissions. This increase is large enough to over-compensate the reduction in quantity from the orange firms (whereas the overall carbon is still reduced, as seen above). The exception %
arises when there are no red firms present to carry out this mechanism.\footnote{The same happens if all red firms are infinitely skeptical and thus completely unaffected by emissions, a situation that was excluded in this section.}

Next, we turn to the dependence of a firm's choice on its own belief and the beliefs of other firms. A clear-cut result holds for the technology choices which behave like strategic substitutes, with a strict monotonicity unless the firm is subject to binding constraints.

\begin{theorem}\label{th:compStatsTech}
  Consider the equilibrium technology choice $r_{i}$ of any firm $i$.
  \begin{enumerate}
  \item $r_{i}$ is weakly decreasing in $i$'s own belief. The decrease is strict iff~$i$ is orange.\footnote{Here ``iff'' stands for ``if and only if.''} 
  \item $r_{i}$ is weakly increasing in the belief of any other firm $j\neq i$. The increase is strict iff~$i$ is orange and~$j$ emits carbon (i.e., $j$ is not green).
  \end{enumerate}
\end{theorem}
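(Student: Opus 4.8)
The plan is to reduce Theorem~\ref{th:compStatsTech} to the quantitative feedback formulas for $r_i$ in terms of $Q$, $K$, and the firm's belief, and then combine these with the aggregate sensitivities from Theorem~\ref{th:compStatsTotal}. First I would split into cases according to the color of firm $i$. If $i$ is \emph{green}, then $r_i=0$ identically, so $r_i$ is trivially (weakly) constant in every belief and both ``strict'' clauses correctly fail. If $i$ is \emph{red}, then $r_i=1$ identically (as long as the regime is preserved), so again $r_i$ is constant and the strictness clauses fail as claimed. Thus the only substantive case is $i$ \emph{orange}, where by Proposition~\ref{pr:FOCbody} we have $k_i=\tfrac12(a_i-K_{-i})$ and $q_i=\tfrac12(z-Q_{-i})$, hence
\[
  r_i=\frac{k_i}{q_i}=\frac{a_i-K_{-i}}{z-Q_{-i}}=\frac{a_i-(K-k_i)}{z-(Q-q_i)}.
\]
Solving this for $r_i$ in terms of the aggregates (using $k_i=r_iq_i$, $q_i=\tfrac12(z-Q_{-i})$), or more simply using $2k_i=a_i-K+k_i$ so $k_i=a_i-K$ and $2q_i=z-Q+q_i$ so $q_i=z-Q$, gives the clean feedback expression
\[
  r_i=\frac{a_i-K}{z-Q},\qquad a_i=\frac{d}{\beta_i}=\frac{d}{b\alpha_i^2}.
\]
So everything comes down to signing the derivative of this quantity as a function of the beliefs, where $K=K(\alpha_1^2,\dots,\alpha_n^2)$ and $Q=Q(\alpha_1^2,\dots,\alpha_n^2)$ carry the equilibrium feedback.

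For part (i) (own belief $\alpha_i^2$), note $a_i$ is strictly decreasing in $\alpha_i^2$. Since $i$ is orange, $Q$ is unaffected by $i$'s belief \emph{unless} there are red firms, and $K$ is strictly decreasing in $\alpha_i^2$ (Theorem~\ref{th:compStatsTotal}); moreover the denominator $z-Q>0$ because $q_i=z-Q>0$ for an orange firm. The numerator $a_i-K$ equals $k_i>0$. Differentiating, $\partial_{\alpha_i^2} r_i$ has the sign of $(z-Q)\,\partial_{\alpha_i^2}(a_i-K) + (a_i-K)\,\partial_{\alpha_i^2}Q$; here $\partial_{\alpha_i^2}a_i<0$, $\partial_{\alpha_i^2}K\le 0$ (so $\partial_{\alpha_i^2}(a_i-K)<0$ strictly), and $(a_i-K)>0$ with $\partial_{\alpha_i^2}Q\ge 0$. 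The sign is therefore not immediately forced, so the honest way is to substitute the explicit quantitative derivatives of $Q$ and $K$ from Appendix~\ref{se:compStatDetails} and check the combination is strictly negative; I expect a cancellation leaving something manifestly negative. For part (ii) ($j\ne i$), $a_i$ does not depend on $\alpha_j^2$, so $\partial_{\alpha_j^2}r_i$ has the sign of $-(z-Q)\,\partial_{\alpha_j^2}K + (a_i-K)\,\partial_{\alpha_j^2}Q$. If $j$ is green, then by Theorem~\ref{th:compStatsTotal} both $\partial_{\alpha_j^2}K=0$ and $\partial_{\alpha_j^2}Q=0$, giving $\partial_{\alpha_j^2}r_i=0$ — matching the claim. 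If $j$ emits carbon, then $\partial_{\alpha_j^2}K<0$ strictly, which makes the first term strictly positive; for the second term, $\partial_{\alpha_j^2}Q\ge 0$ if $j$ is orange (or $j$ red with the ``no red firms'' exception void since $j$ itself is red) and $\partial_{\alpha_j^2}Q<0$ if $j$ is red, so again I would plug in the explicit Appendix formulas to confirm the positive first term dominates and $\partial_{\alpha_j^2}r_i>0$ strictly.

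The main obstacle is exactly the ``mixed sign'' in each derivative: the numerator and denominator of $r_i=(a_i-K)/(z-Q)$ move in directions that do not obviously combine, because of the feedback through $Q$ (which can move opposite to $K$, per the discussion after Theorem~\ref{th:compStatsTotal}). So the crux of the proof is not conceptual but the bookkeeping: one must import the precise quantitative comparative statics of $Q$ and $K$ derived in Appendix~\ref{se:compStatDetails}, substitute, and verify the resulting rational expression has the asserted sign. I would organize this by the three possible colors of firm $j$ (green / orange / red) and, within the orange case for $i$, handle separately the subcase ``no red firms in the economy'' (where $Q$ is locally constant in orange beliefs, simplifying everything) versus ``at least one red firm.'' Finally, for the weak-monotonicity statements at regime boundaries, I would invoke Remark~\ref{rk:boundaries}: the feedback formulas extend continuously, so the inequalities pass to the closure, and the ``iff'' for strictness is read off from which color class $i$ (and $j$) lie in on the relevant side of the boundary.
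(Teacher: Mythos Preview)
Your approach matches the paper's: trivial for green and red~$i$, and for orange~$i$ use $r_i=(a_i-K)/(z-Q)$ together with the explicit derivatives of $K$ and $Q$ from Appendix~\ref{se:compStatDetails} (the paper parameterizes orange beliefs through $A_{int}=\sum_{I_{int}}a_j$ and red beliefs through $B_1=\sum_{I_1}(1+\beta_j)^{-1}$, then signs $\partial r_i/\partial a_i$, $\partial r_i/\partial a_j$, $\partial r_i/\partial B_1$ directly).

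One correction: from $\partial_{\alpha_i^2}a_i<0$ and $\partial_{\alpha_i^2}K\le 0$ you \emph{cannot} infer $\partial_{\alpha_i^2}(a_i-K)<0$, since these two contributions have opposite signs; the statement happens to be true because $a_i-K=k_i$ and $k_i$ is strictly decreasing in own belief, but your plan is unaffected since you correctly defer to the quantitative formulas anyway. Also, when $j$ is orange both terms in your expression for $\partial_{\alpha_j^2}r_i$ are nonnegative (the first strictly), so the qualitative signs already suffice there; the explicit computation is genuinely needed only for part~(i) and for $j$ red in part~(ii), which is exactly where the paper carries it out.
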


In the preceding result, binding constraints cause little complication because the technology choice remains constant at those boundaries. This is not the case for the production quantities and carbon emissions, whose comparative statics depend on the type of firm. %

\begin{theorem}\label{th:compStatsRest}
  (a) Consider the equilibrium carbon emission $k_{i}$ and production quantity $q_{i}$ of any firm $i$.
  \begin{enumerate}
  \item $k_{i}$ is weakly decreasing in firm $i$'s own belief. The decrease is strict unless $i$ is green.
  \item $q_{i}$ is weakly decreasing in firm $i$'s own belief. The decrease is strict as long as $i$ is not green and red firms exist in the economy.
  \end{enumerate}
  (b) Consider a second firm $j\neq i$.
  \begin{enumerate}
  \item If firm $j$ is green, its belief does not affect other firms.
  \item If firm $j$ is orange and firm $i$ is red, $k_{i}$ and $q_{i}$ depend ambiguously on $j$'s belief. The direction depends on the other firms (see Remark~\ref{rk:ambiguousQuantity}).
  \end{enumerate}
  In the remaining cases,
    \begin{enumerate}
    \item[(iii)] $k_{i}$ is weakly increasing in $j$'s belief, and strictly increasing unless $i$ is green,
  \item[(iv)]  $q_{i}$ is strictly increasing in $j$'s belief if $j$ is red, but weakly decreasing if $j$ is orange. The decrease is strict unless there are no red firms.
  \end{enumerate}
\end{theorem}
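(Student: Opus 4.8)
The plan is to derive the theorem as a corollary of explicit feedback formulas obtained by implicitly differentiating the equilibrium system within a fixed regime. Throughout, fix the color buckets $G,O,R$ (green, orange, red) and their cardinalities $g,o,\rho$; by the discussion preceding the theorem this is legitimate for sufficiently small perturbations. By Proposition~\ref{pr:FOCbody}, every green or orange firm produces the common quantity $q_i=z-Q$, every orange firm emits $k_i=a_i-K$, green firms emit $k_i=0$, and every red firm has $q_i=k_i=(A-c-Q-\beta_iK)/(1+\beta_i)$. Summing these identities over the three buckets yields a linear $2\times2$ system $M(Q,K)^\top=v$ whose coefficients depend on the beliefs only through $\ell:=g+o$, $o$, $\mu_0:=\sum_{i\in R}(1+\beta_i)^{-1}$, $\mu_1:=\sum_{i\in R}\beta_i(1+\beta_i)^{-1}=\rho-\mu_0$ and $\sum_{i\in O}a_i$; concretely $M$ has rows $(1+\ell+\mu_0,\,\mu_1)$ and $(\mu_0,\,1+o+\mu_1)$, with determinant $\Delta=(1+\ell)(1+o)+(1+\ell)\mu_1+(1+o)\mu_0>0$, so $(Q,K)$ is a smooth function of the beliefs.

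The second step is to differentiate the system with respect to $\beta_j=b\alpha_j^2$ (equivalently $\alpha_j^2$), distinguishing the color of the perturbed firm $j$. If $j$ is green, neither $M$ nor $v$ moves, so $Q$, $K$ and all individual choices are unchanged---this is part (b)(i). If $j$ is orange, only the entry $\sum_{i\in O}a_i$ of $v$ changes, by $-d/\beta_j^2$, and solving gives $\partial Q/\partial\beta_j=(d/\beta_j^2)\mu_1/\Delta\ge0$ (vanishing precisely when there are no red firms) together with $\partial K/\partial\beta_j=-(d/\beta_j^2)(1+\ell+\mu_0)/\Delta<0$. If $j$ is red, both $M$ and $v$ move; using the identity $M^{-1}(1,1)^\top=\Delta^{-1}(1+o,\,1+\ell)^\top$ and the elementary identity $A-c-Q+K=(1+\beta_j)(q_j+K)$ (which rests on the nonnegativity $A-c-Q\ge\beta_jK$ of red output), one gets $\partial Q/\partial\beta_j=-\lambda(1+o)<0$ and $\partial K/\partial\beta_j=-\lambda(1+\ell)<0$ with $\lambda:=(q_j+K)/((1+\beta_j)\Delta)>0$. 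This already reproves Theorem~\ref{th:compStatsTotal} and is the quantitative content behind the transversality Lemma~\ref{le:antimonotone}.

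The third step propagates these aggregate sensitivities to the individual $q_i$ and $k_i$ through the feedback formulas, treating $i=j$ separately because of the extra explicit dependence on $\beta_i$ (red $i$) or $a_i$ (orange $i$). For green or orange $i$ one has $q_i=z-Q$, so $\partial q_i/\partial\beta_j=-\partial Q/\partial\beta_j$, which immediately yields item (a)(ii) and the $q_i$-parts of (iii)--(iv); for orange $i$, $k_i=a_i-K$ gives $\partial k_i/\partial\beta_i=(d/\beta_i^2)\bigl((1+\ell+\mu_0)/\Delta-1\bigr)<0$ since $\Delta>1+\ell+\mu_0$ whenever $o\ge1$, and $\partial k_i/\partial\beta_j=-\partial K/\partial\beta_j$ for $j\neq i$, which yields the $k_i$-parts. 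For red $i$, $q_i=k_i=(A-c-Q-\beta_iK)/(1+\beta_i)$; for $j\neq i$ one gets $\partial q_i/\partial\beta_j=(-\partial Q/\partial\beta_j-\beta_i\,\partial K/\partial\beta_j)/(1+\beta_i)$, strictly positive when $j$ is red but with the sign of $\beta_i(1+\ell+\mu_0)-\mu_1$ when $j$ is orange---hence the ambiguity recorded in (b)(ii) and Remark~\ref{rk:ambiguousQuantity}. For $i=j$ red, a short computation reduces $\partial q_i/\partial\beta_i$ to $(q_i+K)\bigl(\tfrac{(1+o)+\beta_i(1+\ell)}{(1+\beta_i)\Delta}-1\bigr)/(1+\beta_i)$, which is negative by the bound $(1+\beta_i)\Delta\ge(1+\beta_i)(1+\ell)(1+o)>(1+o)+\beta_i(1+\ell)$. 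Collecting signs---and in each ``strict'' clause identifying the single positivity that can fail (typically $\mu_1>0$, i.e.\ the presence of a red firm with positive belief, or strictness of $\Delta>1+\ell+\mu_0$, i.e.\ $o\ge1$)---matches every item of the statement.

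The conceptual work---reducing everything to the aggregate $2\times2$ system and the identity $M^{-1}(1,1)^\top=\Delta^{-1}(1+o,\,1+\ell)^\top$---is short. The main obstacle is bookkeeping: there are many $(i,j,\text{color})$ sub-cases, and each ``weakly/strictly'' qualifier must be traced to a specific inequality among nonnegative aggregates. The one genuinely non-obvious sign is that a red firm's own quantity falls in its own belief even though both $Q$ and $K$ fall; the trick is the substitution $A-c-Q+K=(1+\beta_i)(q_i+K)$, which turns that derivative into something manifestly negative.
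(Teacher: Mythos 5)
Your proposal is correct and takes essentially the same route as the paper's Appendix~\ref{se:compStatDetails}: fix the regime, aggregate the feedback formulas of Proposition~\ref{pr:FOCbody} into a linear system for $(Q,K)$ (your $\Delta$ coincides with the paper's denominator $N$ from Proposition~\ref{pr:KQexplicit}), differentiate in the beliefs, and propagate back to the individual $q_i,k_i$ --- the paper merely solves the system explicitly and differentiates with respect to the aggregates $A_{int}$ and $B_{1}$ rather than $\beta_j$ directly. The only blemish is the chain $(1+\beta_i)\Delta\ge(1+\beta_i)(1+\ell)(1+o)>(1+o)+\beta_i(1+\ell)$, whose second inequality degenerates to an equality when $\ell=o=0$ (all firms red); since firm $i$ is red, $\mu_0,\mu_1>0$ makes the first inequality strict in that case, so the sign conclusion is unaffected.
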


The results on $k_{i}$ are mostly intuitive. If a firm $j$ grows more concerned about climate impacts, it reduces its emissions. As a consequence, the marginal cost of carbon decreases and other firms increase their emissions. The case where $i$ is red and $j$ is orange is more complex, in part because the carbon is coupled with the production quantity for red firms. The direction of change then depends on the characteristics of other red firms (if any) and the ranking of the beliefs among the red firms; cf.\ Remark~\ref{rk:ambiguousQuantity} for details.

The observations about $q_{i}$ can be understood as in the discussion after Theorem~\ref{th:compStatsTotal}. If $j$ is red and grows more concerned about climate impacts, it reduces its emissions and hence, a fortiori, its production quantity. Other firms then face less competition and increase their quantity. If $j$ is orange, however, it reduces emissions by changing technology rather than quantity. Red firms then increase their production since the marginal cost of carbon has decreased, but green and orange firms decrease their production as a reaction to the competition from red firms. Again, the case (ii) is discussed in Remark~\ref{rk:ambiguousQuantity}.

\section{Repeated Game and Cumulative Temperature Change}\label{se:dynamics}

In this section we consider a repeated version of the Cournot game. Suppose that after the carbon has been emitted and all goods have been sold,  firms start a new planning phase similar to the first one. The emitted carbon, zero before the first round, accumulates and becomes external carbon for the next round. The goods from the previous rounds are considered consumed, so that the demand is determined by the same utility function in each round. %
We assume that firms are myopic in their planning, but \MN{firms' views on climate change can evolve from one round to the next:} the coefficients in the $m$-th round are denoted $a^{(m)}_{j}$. \MN{Under specific conditions, we will see below} that the long-run limit $m\to\infty$ allows for closed-form expressions for the accumulated carbon and temperature increase.

\MN{The scenario we have in mind is that the ``true'' value of the TCRE is approximately constant over time and firms learn this value as time progresses, so that all $a^{(m)}_{j}$ converge to the same value as $m\to\infty$. Such a scenario is more likely when total emissions remain small. 
If the TCRE increases substantially over time or the observed climate changes dramatically so that firms' optimization problems are altered fundamentally (e.g., a climate tipping event is observed and triggers changes in regulation and firms' available actions), the modeling as a repeated game as well as the technical condition below do not hold.}

The following results show that the total carbon \MN{in the repeated game} stabilizes at a level that is determined by the most skeptical firm in the long run; i.e., the minimal parameter~$\alpha_{i}^{2}$ or equivalently the maximal~$a_{i}$. In our next result, firms increasingly use green technology and the total carbon stabilizes at the level of the largest limit point
$$
  a:=\limsup_{m\to\infty}\max\{a^{(m)}_{1},\dots,a^{(m)}_{n}\} \in[0,\infty].
$$
\MN{The result assumes that $a^{(m)}_{j}\leq a$ for all $m$ and $j$. }
This is clearly satisfied if the sequences $a^{(m)}_{j}$ are increasing in $m$, as would be the case e.g.\ if the variance of the TCRE under the subjective views decreases over time while the mean is constant. \MN{Importantly, the assumption excludes a scenario where some coefficients $a^{(m)}_{j}$ are high at an intermediate time but all coefficients eventually become small. Then, carbon at the intermediate time may exceed $a$ even though it accumulated in a relatively shorter time---after the intermediate period all firms become so concerned about climate impacts that they use the zero-emissions technology. An obvious example is when $a=0$ and $a^{(1)}>0$. For instance, suppose that in the first rounds, some firms believe that a tipping event will happen and others do not. If at some point the tipping event indeed happens and all firms use the zero-emissions technology going forward, the total carbon would remain at the threshold level of the event rather than corresponding to a long run belief.}

\begin{proposition}\label{pr:dynamics}
  Suppose that $c+d<A$ and $a^{(m)}_{j}\leq a$ for all $m$ and $j$. Then the accumulated carbon emissions converge to $a$ as $m\to\infty$.
\end{proposition}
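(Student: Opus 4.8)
The plan is to track the accumulated carbon $K^{(m)}$ at the start of round $m$ (so $K^{(1)}=0$, and $K^{(m+1)}=K^{(m)}+\sum_j k_j^{(m)}$ where the $k_j^{(m)}$ are the equilibrium emissions of round $m$ with external carbon $K_{ex}=K^{(m)}$), and to show $K^{(m)}\to a$. The key structural fact I would extract from Proposition~\ref{pr:FOCbody} and the equilibrium description is a one-step contraction-type estimate: the total carbon emitted in a round, as a function of the incoming external carbon $K$ and the coefficients $(a_j^{(m)})$, should be monotone decreasing in $K$, and moreover the map $K\mapsto K+\sum_j k_j(K)$ should drive $K$ toward a ``target'' level governed by $\max_j a_j^{(m)}$. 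Concretely, I expect that in any equilibrium with external carbon $K$, if $K\ge \max_j a_j$ then every firm is green or white and no carbon is emitted (so $K^{(m+1)}=K^{(m)}$), whereas if $K<\max_j a_j$ then some carbon is emitted but not enough to overshoot: $K+\sum_j k_j(K)\le \max_j a_j$. Both of these I would read off from the feedback formulas — for an orange firm $k_i=\tfrac12(a_i-K_{-i})$, which is nonpositive once $K_{-i}\ge a_i$, forcing the green boundary; this is the crux of why the incoming carbon never exceeds the current $\max_j a_j^{(m)}$ once it has entered the relevant regime.

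Granting that one-step fact, the argument splits into the two inequalities $\limsup_m K^{(m)}\le a$ and $\liminf_m K^{(m)}\ge a$. For the upper bound: using the hypothesis $a_j^{(m)}\le a$ for all $m,j$, an induction on $m$ gives $K^{(m)}\le a$ for every $m$ — if $K^{(m)}\le a$ then either $K^{(m)}\ge\max_j a_j^{(m)}$ and nothing is emitted, or $K^{(m)}<\max_j a_j^{(m)}\le a$ and by the non-overshoot property $K^{(m+1)}\le\max_j a_j^{(m)}\le a$. Since $(K^{(m)})$ is nondecreasing (carbon only accumulates) and bounded above by $a$, it converges to some limit $K^\infty\le a$; in particular $\limsup_m K^{(m)}=K^\infty\le a$, and if $a=\infty$ the only thing to prove is the lower bound.

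For the lower bound I would use that $a=\limsup_m\max_j a_j^{(m)}$, so along a subsequence $m_\ell$ we have $\max_j a_j^{(m_\ell)}\to a$ (or $\to\infty$). Suppose for contradiction $K^\infty<a$; pick $\delta>0$ with $K^\infty+\delta<a$ and $\ell$ large enough that $K^{(m_\ell)}$ is within $\delta/2$ of $K^\infty$ and $\max_j a_j^{(m_\ell)}>K^\infty+\delta$. Then in round $m_\ell$ the most-concerned firm has $a_{j^*}^{(m_\ell)}$ strictly exceeding $K^{(m_\ell)}$ by at least $\delta/2$, so by Proposition~\ref{pr:FOCbody} it is orange (or red) and emits a strictly positive amount bounded below in terms of that gap — e.g.\ an orange firm emits $\tfrac12(a_{j^*}^{(m_\ell)}-K_{-j^*})\ge \tfrac12(a_{j^*}^{(m_\ell)}-K^{(m_\ell)}-\sum_{j\ne j^*}k_j)$, and one must check the competitors' emissions $\sum_{j\ne j^*}k_j$ together with $K^{(m_\ell)}$ stay bounded away from $a_{j^*}^{(m_\ell)}$; this uses the equilibrium ordering (the most-concerned firm is the greenest, so others emit less) and the non-overshoot bound. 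A uniform positive lower bound on $k_{j^*}^{(m_\ell)}$ over infinitely many $\ell$ contradicts $K^{(m)}\to K^\infty<\infty$ (a convergent nondecreasing sequence has increments tending to $0$). Hence $K^\infty\ge a$, completing the proof.

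The main obstacle I anticipate is not the convergence scaffolding but the clean extraction of the ``non-overshoot'' and ``positive-emission-when-below-target'' one-step lemmas from the case-heavy equilibrium description — in particular handling the red firms, whose emissions $k_i=\tfrac12\frac{1}{1+\beta_i}(A-c-Q_{-i}-\beta_i K_{-i})$ are coupled to quantity and do not obviously vanish simply because $K$ is large, so one must invoke the hypothesis $a_1^{(m)}\ge z+K_{ex}$-type thresholds or, more precisely, argue that once $K^{(m)}$ is close to $a$ the relevant regime is green/orange and the red-firm formulas are not active, or else bound the red contribution directly. Making this regime bookkeeping rigorous and uniform in $m$ is where the real work lies; the rest is a monotone-bounded-sequence argument plus a subsequence extraction.
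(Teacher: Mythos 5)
Your overall scaffolding (monotone accumulation of carbon, an induction/non-overshoot argument for the upper bound $K^{(m)}\le a$, and a subsequence contradiction for the lower bound) is the same as the paper's, and the upper-bound half is essentially identical: the paper argues that at the first round with $K_m>a$ one has $K_m>a^{(m)}_j$ for all $j$, so every firm lies in $I^q_0\cup I^r_0$, no carbon is emitted, and $K_m=K_{m-1}$, a contradiction. Your orange-firm estimate for the lower bound is also sound and in fact easier than you fear: since $K_{-j^*}\le K_{m}\le K^\infty$ (monotone convergence from below), the formula $k_{j^*}=\tfrac12(a_{j^*}-K_{-j^*})$ gives $k_{j^*}\ge\tfrac12(a_{j^*}-K^\infty)>\delta/2$ directly, with no need to control competitors via an ordering argument. (Incidentally, the firm with maximal $a_j$ is the \emph{most skeptical}, hence the least green, firm; your ``most-concerned/greenest'' labels are inverted, though this does not affect the formulas you invoke.)

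The genuine gap is precisely the step you defer: the rounds in which the maximal-$a$ firm is not orange. Your proposed fix---that once $K^{(m)}$ is close to $a$ the regime must be green/orange---is not justified: the red conditions $z-Q+\beta_{j^*}(a_{j^*}-K)>0$ and $Q-K\ge z-a_{j^*}$ are compatible with $K$ arbitrarily close to $a_{j^*}$ (they only pin $Q$ into a narrow window near $z$), and when $Q\ge z$ there are \emph{no} green or orange firms at all (Remark~\ref{rk:exclusionPrinciple}) yet carbon is still emitted; the threshold hypothesis $a_1\ge z+K_{ex}$ of Example~\ref{ex:noGreenTech} is not available here. The paper closes this in two steps that your plan lacks. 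First, a dichotomy: if $Q_m\ge z$ for infinitely many $m$, then in each such round all firms lie in $I^q_0\cup I_1$, so the increment equals $Q_m\ge z>0$ (this is exactly where $c+d<A$ enters) and $K_m\to\infty\ge a$. Second, once $Q_m<z$ eventually, Lemma~\ref{le:minimalCarbon} gives a quantitative lower bound valid for red firms as well, $k_i\ge\tfrac12\tfrac{\beta_i}{1+\beta_i}(a_i-K_{-i})$, applied to the firm with $a_i=\max_j a^{(m)}_j$ (with the separate estimate $k_i\ge d/2$ when $\beta_i=0$); this yields $K_m\ge\lambda a^{(m)}+(1-\lambda)K_{m-1}$ and hence $\lim K_m\ge\limsup_m a^{(m)}=a$, which is the uniform positive increment your subsequence argument requires. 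Without these two ingredients the lower bound is not established, so as written the proposal is incomplete.
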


  Suppose that the limit $a$ corresponds to the true (deterministic) TCRE $\alpha$. If $K=a$ denotes the limiting total carbon and~$T=\alpha K =\alpha a$ the corresponding temperature increase, Proposition~\ref{pr:dynamics} shows that 
  $$
    T=\frac{d}{b\alpha}.
  $$
  Recall the tax interpretation of the price function: $b\alpha$ is the tax rate, per unit of carbon emitted and temperature increase. Thus, the limiting temperature change $T$ is succinctly described as the quotient of the extra cost $d$ for the green technology and the tax rate.

The assumption that $c+d<A$ is essential in Proposition~\ref{pr:dynamicsNoGreen} because it allows firms to use green technology to reduce carbon emissions while keeping the quantity produced above a threshold. If $A \leq c+d$, consumers will not pay for the green technology and the limit is different: the total carbon now stabilizes because the production tends to zero and the economy comes to a standstill. We also assume that $c< A$; otherwise no goods are produced and the result is trivial.

\begin{proposition}\label{pr:dynamicsNoGreen}
Suppose that $c< A \leq c+d$  and $\beta^{(m)}_{j}\geq \beta$ for all $m$ and~$j$, where $\beta=\liminf_{m\to\infty} \min\{\beta^{(m)}_{1},\dots,\beta^{(m)}_{n}\} \in [0,\infty]$. 
Then the accumulated carbon emissions converge to $(A-c)/\beta$.
\end{proposition}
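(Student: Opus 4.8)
The plan is to reduce the problem to the white--red regime in every round and then iterate a one-step estimate. Since $A\le c+d$ forces $z=A-c-d\le 0$, Proposition~\ref{pr:FOCbody} shows that no firm can be green or orange (such a firm would have $q_i=\tfrac12(z-Q_{-i})\le 0$), so by Theorem~\ref{th:mainBody} the unique equilibrium in round $m$ consists only of white firms ($q_i=k_i=0$) and red firms ($q_i=k_i>0$). Let $R_m$ be the set of red firms and $K_{ex}$ the external carbon in that round. Inserting $Q_{-i}=Q-q_i$ and $K_{-i}=K_{ex}+Q-q_i$ (valid since $k_j=q_j$ on $R_m$, $k_j=0$ otherwise, and $Q=\sum_{j\in R_m}q_j$) into the first-order condition of Proposition~\ref{pr:FOCbody}(iv) yields, after a short computation, $q_i=\xi_i-Q$ for $i\in R_m$ with $\xi_i:=(A-c-\beta_iK_{ex})/(1+\beta_i)$; summing over $R_m$ gives $Q=\frac{1}{|R_m|+1}\sum_{i\in R_m}\xi_i$ and $K=K_{ex}+Q$ (equivalently, one may quote the white--red formulas of Example~\ref{ex:noGreenTech}). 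In the repeated game, writing $K^{(m)}$ for the accumulated carbon after round $m$ with $K^{(0)}=0$, the external carbon in round $m$ is $K^{(m-1)}$ and $K^{(m)}=K^{(m-1)}+Q^{(m)}\ge K^{(m-1)}$; hence $K^{(m)}$ increases to a limit $K^{(\infty)}\in[0,\infty]$, and it remains to show $K^{(\infty)}=(A-c)/\beta$.

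Assume $0<\beta<\infty$ (if $\beta=\infty$ every firm is white in every round and both sides vanish; if $\beta=0$ the upper-bound step below is vacuous and the lower-bound step gives $K^{(\infty)}=\infty=(A-c)/\beta$). \emph{Upper bound.} I would show $K^{(m)}<(A-c)/\beta$ for all $m$ by induction. If $K^{(m-1)}<(A-c)/\beta$ and $i\in R_m$, then $q_i^{(m)}=\xi_i^{(m)}-Q^{(m)}>0$ gives $Q^{(m)}<\xi_i^{(m)}=(A-c-\beta_i^{(m)}K^{(m-1)})/(1+\beta_i^{(m)})$. The map $t\mapsto(A-c-tK^{(m-1)})/(1+t)$ has derivative $-(K^{(m-1)}+A-c)/(1+t)^2<0$ (as $A>c$), so from $\beta_i^{(m)}\ge\beta$ and $A-c-\beta K^{(m-1)}>0$ we get $\xi_i^{(m)}\le(A-c-\beta K^{(m-1)})/(1+\beta)\le(A-c-\beta K^{(m-1)})/\beta=(A-c)/\beta-K^{(m-1)}$. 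Hence $Q^{(m)}<(A-c)/\beta-K^{(m-1)}$ (trivially if $R_m=\emptyset$), so $K^{(m)}<(A-c)/\beta$; thus $K^{(\infty)}\le(A-c)/\beta$.

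\emph{Lower bound.} Suppose $K^{(\infty)}=L<(A-c)/\beta$. Pick a subsequence $(m_k)$ with $\min_j\beta_j^{(m_k)}\to\beta$ and let $j_k$ attain this minimum. Since the map above is decreasing in $t$, $\xi_{j_k}^{(m_k)}=\max_j\xi_j^{(m_k)}$; and since $K^{(m_k-1)}\le L$, $\xi_{j_k}^{(m_k)}\ge(A-c-\beta_{j_k}^{(m_k)}L)/(1+\beta_{j_k}^{(m_k)})$, whose limit as $k\to\infty$ is $(A-c-\beta L)/(1+\beta)=:\delta>0$. Next, once $\xi_{j_k}^{(m_k)}>0$ the firm $j_k$ must be red: were it white, its best response given the equilibrium play of the other firms would be $\max\{0,\tfrac12(\xi_{j_k}^{(m_k)}-Q^{(m_k)})\}$, which is positive because $Q^{(m_k)}<\xi_\ell^{(m_k)}\le\xi_{j_k}^{(m_k)}$ for any red $\ell$ (and $Q^{(m_k)}=0<\xi_{j_k}^{(m_k)}$ if no firm is red), contradicting $q_{j_k}=0$. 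Being red, $q_{j_k}^{(m_k)}=\xi_{j_k}^{(m_k)}-Q^{(m_k)}\ge\frac{1}{|R_{m_k}|+1}\xi_{j_k}^{(m_k)}\ge\frac{1}{n+1}\xi_{j_k}^{(m_k)}$, using $Q^{(m_k)}=\frac{1}{|R_{m_k}|+1}\sum_{i\in R_{m_k}}\xi_i^{(m_k)}\le\frac{|R_{m_k}|}{|R_{m_k}|+1}\xi_{j_k}^{(m_k)}$. Hence $K^{(m_k)}-K^{(m_k-1)}=Q^{(m_k)}\ge\frac{\delta}{2(n+1)}$ for all large $k$; as $K^{(m)}$ is nondecreasing this forces $K^{(\infty)}=\infty$, contradicting $L<\infty$. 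Therefore $K^{(\infty)}=(A-c)/\beta$.

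The main obstacle is the lower bound. The delicate point is to show that the \emph{most skeptical} firm produces a quantity bounded away from $0$ in \emph{equilibrium}---not just that its stand-alone best response is positive---which needs both the explicit identity $Q=\frac{1}{|R_m|+1}\sum_{i\in R_m}\xi_i$ (to dominate $Q^{(m_k)}$ by $\frac{|R_{m_k}|}{|R_{m_k}|+1}\max_i\xi_i^{(m_k)}$) and the Cournot-deviation argument placing $j_k$ in $R_{m_k}$. Moreover, because the beliefs $\beta_j^{(m)}$ vary with $m$ there is no uniform positive lower bound on per-round production, so one genuinely must pass to the $\liminf$-realizing subsequence; the hypothesis $\beta_j^{(m)}\ge\beta$ is exactly what makes the overshoot estimate in the upper bound valid at every round.
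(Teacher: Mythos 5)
Your proof is correct and follows essentially the same route as the paper: reduce to the white--red regime via $z\le 0$ and the formulas of Example~\ref{ex:noGreenTech}, establish the per-round overshoot bound $K^{(m)}<(A-c)/\beta$ from $\beta^{(m)}_{j}\ge\beta$, and get the matching lower bound by showing that along the $\liminf$-attaining subsequence the most skeptical firm is red and produces a quantity bounded away from zero. The paper packages that last step as an unconditional per-round inequality $K_{m}\ge \lambda\,\tfrac{A-c}{\beta^{(m)}_{i}}+(1-\lambda)K_{m-1}$ (Lemma-style, with $\lambda=\tfrac{\beta}{(n+1)(1+\beta)}$) instead of your contradiction-plus-deviation argument, but the mechanism is the same.
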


\section{Conclusion}\label{se:conclusion}
We formulate a partial equilibrium model where firms make irreversible decisions about production and emissions with the aim of maximizing expected future profits. Profits are reduced by carbon taxes at a rate that depends on future climate change, hence is endogenous and uncertain at the time of planning. Taxes are imposed by an outside regulator and incentivize firms to mitigate emissions.
Firms agree-to-disagree about the climate impact of carbon and therefore about the tax rate. 
The framework of agreeing to disagree seems adequate given that the equilibrium depends only on the second moments of the beliefs and actions (e.g., starting to build a nuclear plant) are mostly observable. This argument does not extend to the regulator, who is not part of the partial equilibrium. It may be interesting to study a model where the role of signaling for the regulator can be investigated.
In the present model, the regulator has already put in place an adjustment policy to a single source of uncertainty, the change of temperature. In reality, the regulator is concerned with the ``cost'' of climate change---which is itself uncertain and whose estimate changes with scientific advances and public opinion, creating an intricate and time-inconsistent decision problem \citep[e.g.,][]{ulph2013optimal,jakob2014optimal}.

Our model allows us to study how a firm would position itself in an economy where competitors differ in their expectations about the future cost of carbon. More generally, this may inform our thinking regarding changes in consumer preferences or other climate-related risks. In this model, mitigation efforts act as substitutes and are increasing in the variance of the subjective belief on the carbon-climate response. That is, for a  given firm, having skeptical competitors and large uncertainty leads to higher mitigation efforts. 
This is consistent with a standard Cournot model without technology choice and taxes, yet a more detailed analysis of the comparative statics reveals that reactions in terms of production and carbon quantity depend on the relative position of the firm in the economy. Indeed, the technology choice decouples production and emissions and hence allows firms to  react differently to the prices of the good and carbon. In several equilibrium regimes, carbon taxes reduce emissions without lowering the production quantity.

\newcommand{\dummy}[1]{}

\appendix

\section{Existence, Uniqueness, Characterization of Equilibrium}\label{se:detailsEquilibrium}

The following result states the optimality conditions for a given firm. In particular, these formulas apply in any equilibrium. The sets $I_{0}^{q}, I_{0}^{r}, I_{int}, I_{1}$ in the proposition correspond to the color coding white, green, orange, red used in the body of the text. We recall the quantities introduced in~\eqref{eq:abbreviations}.

\begin{proposition}\label{pr:FOC}  
  Let $A,c,d,\alpha_{i}^{2}>0$ and $K_{-i}\geq0$ and $0\leq Q_{-i} \leq A$. Set $z=A-c-d$ and $\beta_{i}=b\alpha_{i}^{2}$ and $a_{i}=d/\beta_{j}$.\footnote{In the case $\alpha_{i}^{2}=0$ the statements need to be read with $\beta_{i}=0$, $a_{i}=\infty$ and $\beta_{i}a_{i}=d$.} Define the sets 
  \begin{align*}
      I_{0}^{q}&=\{i:\, z-Q_{-i}+\beta_{i}(a_{i}-K_{-i})\leq0 \mbox{ and } Q_{-i}\geq z\},\\
      I_{0}^{r}&=\{i:\, K_{-i}\geq a_{i} \mbox{ and } Q_{-i}< z\},\\
      I_{int}&=\{i:\, K_{-i}< a_{i} \mbox{ and } Q_{-i}-K_{-i}<z-a_{i}\},\\
      I_{1}&=\{i:\, z-Q_{-i}+\beta_{i}(a_{i}-K_{-i})>0 \mbox{ and } Q_{-i}-K_{-i}\geq z-a_{i} \}.
  \end{align*}   
  These sets form a partition of $\{1,\dots,n\}$.
  Fix a firm $i$ and suppose the quantity $Q_{-i}$ of the good and $K_{-i}$ carbon are supplied exogenously. Then there exists a response $(r_{i},q_{i})\in[0,1]\times [0,A-Q_{-i}]$ which maximizes the expected profit~\eqref{eq:expectedProfitQuadratic} of firm~$i$, and $(r_{i},q_{i})$ is unique with the convention that $r_{i}=0$ when $q_{i}=0$.
  Denoting $Q=Q_{-i}+q_{i}$ and $K=K_{-i}+r_{i}q_{i}$, we also have
  \begin{align*}
      I_{0}^{q}&=\{i:\, z-Q+\beta_{i}(a_{i}-K)\leq0 \mbox{ and } Q\geq z\},\\
      I_{0}^{r}&=\{i:\, K\geq a_{i} \mbox{ and } Q< z\},\\
      I_{int}&=\{i:\, K< a_{i} \mbox{ and } Q-K<z-a_{i}\},\\
      I_{1}&=\{i:\, z-Q+\beta_{i}(a_{i}-K)>0 \mbox{ and } Q-K\geq z-a_{i} \}.
  \end{align*} 
  Moreover, with $k_{i}=r_{i}q_{i}$, the following hold. 
\begin{enumerate}
  \item $i\in I^{q}_{0}$ if and only if $q_{i}=0$. Then, $k_{i}=0$ and $r_{i}=0$.
  
  \item $i\in I^{r}_{0}$  if and only if $r_{i}=0$ and  $q_{i}>0$. Then, $q_{i}=\frac12 (z - Q_{-i})=z-Q$ and $k_{i}=0$.

  \item $i\in I_{int}$  if and only if  $r_{i}\in(0,1)$ and $q_{i}>0$. Then, 
  $$
    q_{i}=\frac12(z - Q_{-i})=z - Q, \quad\quad\! k_{i}=\frac12(a_{i}-K_{-i})=a_{i}-K,
  \qquad \!
    r_{i}=\frac{a_{i}-K_{-i}}{z - Q_{-i}} = \frac{a_{i}-K}{z - Q}.
  $$
  \item $i\in I_{1}$  if and only if  $r_{i}=1 \mbox{ and } q_{i}>0$. Then, $k_{i}=q_{i}$ and
  $$
    q_{i}=\frac12\frac{1}{1+b\alpha_{i}^{2}} [ A-c - Q_{-i} - b\alpha_{i}^{2} K_{-i}]=\frac{1}{1+b\alpha_{i}^{2}} [ A-c - Q - b\alpha_{i}^{2} K].
  $$
\end{enumerate}
\end{proposition}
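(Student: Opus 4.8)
The plan is to pass from the control $(r_i,q_i)$ to $(q_i,k_i)$ with $k_i=r_iq_i$, which turns the firm's problem into the maximization of a strictly concave quadratic over a convex polygon, and then to run a Karush--Kuhn--Tucker / directional‑derivative case analysis according to which face of that polygon the maximizer occupies.

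First I would substitute $k_i=r_iq_i$ into \eqref{eq:expectedProfitQuadratic} and use the abbreviations \eqref{eq:abbreviations} to write the expected profit as
\[
  f(q_i,k_i)=g(q_i)+h(k_i),\qquad g(q):=(z-Q_{-i})q-q^{2},\quad h(k):=(d-\beta_iK_{-i})k-\beta_ik^{2},
\]
so that $f$ in fact \emph{separates}, the two summands being coupled only through the constraints. The admissible set $[0,1]\times[0,A-Q_{-i}]$ for $(r_i,q_i)$ is carried onto the polygon $P=\{(q,k):0\le k\le q\le A-Q_{-i}\}$ bijectively off the fibre $q_i=0$, which collapses to the point $(0,0)$ — precisely where the convention $r_i=0$ is needed. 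Since $\beta_i=b\alpha_i^{2}>0$, the Hessian of $f$ is $\operatorname{diag}(-2,-2\beta_i)$, negative definite, so $f$ is strictly concave and has a unique maximizer on the compact convex set $P$; pulling back yields the asserted existence and uniqueness of $(r_i,q_i)$. A short sub‑step using only $c,d>0$ and $Q_{-i}\le A$ (the slope of $g$ at $q=A-Q_{-i}$ is negative, and any active capacity constraint would force a negative multiplier) shows the constraint $q\le A-Q_{-i}$ is never active at the maximizer, except in the trivial case $Q_{-i}=A$ where $q_i=0$; so one may work on $\{0\le k\le q\}$.

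The heart of the argument is the classification, for which the separability is the key simplification. Let $q^{\star}=\tfrac12(z-Q_{-i})^{+}$ and $k^{\circ}=\tfrac12(a_i-K_{-i})^{+}$ be the clipped unconstrained optima of $g$ and of $h$. If $k^{\circ}\le q^{\star}$, then $(q^{\star},k^{\circ})\in P$ and, since $f(q,k)\le g(q^{\star})+h(k^{\circ})$ on all of $\{0\le k\le q\}$, it is the maximizer; this gives the white case ($q^{\star}=0$), the green case ($k^{\circ}=0<q^{\star}$), and the orange case ($0<k^{\circ}<q^{\star}$), with the stated formulas for $q_i,k_i$ and hence for $r_i$, and the defining inequalities of these three regimes translate verbatim into membership in (part of) $I_0^{q}$, in $I_0^{r}$, and in $I_{int}$. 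If $k^{\circ}>q^{\star}$, the constraint $k\le q$ binds, so the maximizer lies on the diagonal; maximizing $q\mapsto f(q,q)=(z-Q_{-i}+d-\beta_iK_{-i})q-(1+\beta_i)q^{2}$ over $q\ge0$ gives either $q_i=0$ (completing the white regime, which requires exactly $Q_{-i}\ge z$ together with $z-Q_{-i}+\beta_i(a_i-K_{-i})\le0$, i.e.\ the two conditions of $I_0^{q}$) or the red formula $q_i=k_i=\tfrac12(1+\beta_i)^{-1}(A-c-Q_{-i}-\beta_iK_{-i})$, under the defining inequalities of $I_{1}$. For each regime one checks both implications of the ``if and only if'': sufficiency is the construction just described, and necessity follows from the first‑order conditions any maximizer must satisfy. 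Because the four positional alternatives $q_i=0$, $k_i=0<q_i$, $0<k_i<q_i$, $k_i=q_i>0$ are mutually exclusive and exhaustive and each is equivalent to membership in one of $I_0^{q},I_0^{r},I_{int},I_{1}$, these four sets automatically partition $\{1,\dots,n\}$. Finally, the feedback form of the four sets is obtained by substituting the closed‑form expressions for $q_i,k_i$ into the defining inequalities and using $Q=Q_{-i}+q_i$, $K=K_{-i}+k_i$ — a one‑line algebraic check in each of the four cases.

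I expect the main obstacle to be bookkeeping rather than ideas: one must verify carefully that each inequality system $I_0^{q},I_0^{r},I_{int},I_{1}$ is \emph{exactly} the set on which the maximizer occupies the corresponding position (both directions), track the boundary ties correctly (e.g.\ $Q_{-i}-K_{-i}=z-a_i$ belongs to the red regime, and likewise $Q_{-i}=z$, etc.), handle the degenerate cases $Q_{-i}=A$ and $\alpha_i^{2}=0$ (read with $\beta_i=0$, $a_i=\infty$, $\beta_ia_i=d$), and confirm that the two presentations of each set coincide. Everything beyond that is elementary: strict concavity of a separable quadratic together with KKT / directional‑derivative arguments.
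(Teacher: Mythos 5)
Your proposal is correct, and while it shares the paper's first step---passing to the variables $(q_i,k_i)$ with $k_i=r_iq_i$, so that the objective becomes a strictly concave quadratic on the compact simplex $\{0\le k\le q\le A-Q_{-i}\}$ and existence/uniqueness follow (the paper makes exactly this observation)---your classification argument is organized differently. The paper returns to $(r,q)$ coordinates, profiles out $q=q_i(r)$ from $\partial_q E_i[\pi_i]=0$, and signs $\partial_r E_i[\pi_i(r,q_i(r))]$ at $r=0$, $r\in(0,1)$, $r=1$ to read off the four regimes; you instead exploit the separability $f(q,k)=g(q)+h(k)$, take the clipped one-dimensional optima $q^{\star},k^{\circ}$, and split according to whether the coupling constraint $k\le q$ is slack (white/green/orange, where the maximizer is $(q^{\star},k^{\circ})$ itself) or active (then maximize $f(q,q)$, yielding white or red). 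This buys a cleaner derivation and makes transparent why, off the red regime, $q_i$ depends only on $Q_{-i}$ and $k_i$ only on $K_{-i}$, whereas the paper's route produces the defining inequalities of $I_0^q,I_0^r,I_{int},I_1$ directly as boundary first-order conditions. The bookkeeping you flag is exactly what remains and is routine: the tie $k^{\circ}=q^{\star}>0$ (equivalently $Q_{-i}-K_{-i}=z-a_i$) lands in $I_1$ with matching formulas; the ``only if'' directions require either the FOC check you mention or a direct verification that the four sets are pairwise disjoint (the paper does the latter plus Cases 1--4); the capacity constraint $q\le A-Q_{-i}$ is indeed never active since $z<A$; and the degenerate reading $\beta_i=0$, $a_i=\infty$, $\beta_ia_i=d$ forces the diagonal branch and reproduces the paper's conclusion ($i\in I_0^q$ or $i\in I_1$ with $q_i=\tfrac12(A-c-Q_{-i})^{+}$), with uniqueness still holding there because $d>0$ makes $k=q$ optimal for each fixed $q$.
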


\begin{proof}
  We derive the assertions referring to $K_{-i}$ and $Q_{-i}$. Once these are established, the assertions referring to $K$ and $Q$ are a direct consequence. One verifies that $I^{q}_{0}$, $I^{r}_{0}$, $I_{int}$ and $I_{1}$ are disjoint and that their union is $\{1,\dots,n\}$, by using the inequalities in their definitions. We first assume that $\alpha_{i}^{2}>0$.
  
  Recall from~\eqref{eq:expectedProfitQuadratic} that for an arbitrary choice $(r,q)\in [0,1]\times [0, A - Q_{-i}]$, firm $i$'s expected profit is
  $$
E_{i}[\pi_{i}(r,q)] 
  =(A-q-Q_{-i}) q - b\alpha_{i}^{2} (r q + K_{-i})r q - (c+d-dr)q.
  $$
  We may express this in terms of $q$ and $k=rq$ as
  $$
  (A-q-Q_{-i}) q - b\alpha_{i}^{2}k^{2}- b\alpha_{i}^{2}K_{-i} k - (c+d)q-dk.
  $$  
  This continuous function is jointly strictly concave on the compact simplex $0\leq k\leq q\leq A - Q_{-i}$; therefore, it admits a unique maximizer. In view of our convention that $r_{i}=0$ as soon as $q_{i}=0$, it follows that $E_{i}[\pi_{i}(r,q)]$ has a unique maximizer $(q_{i},r_{i})\in [0,1]\times [0, A - Q_{-i}]$. In fact, as $q\mapsto E_{i}[\pi_{i}(r,q)]$ is strictly decreasing for $q\geq A - Q_{-i}$, we see that $(r_{i},q_{i})$ is also the unique maximizer in $[0,1]\times\R_{+}$. It will be convenient to rearrange the terms,
  $$
    E_{i}[\pi_{i}(r,q)] = [z-Q_{-i} + \beta_{i}(a_{i}-K_{-i})r]q - (1+\beta_{i} r^{2})q^{2}.
  $$
  Next, we analyze the first-order conditions for interior maxima as well as the potentially binding constraints $q\geq 0$ and $r\geq 0$ and $r\leq 1$.\\[-.5em]
  
  \emph{Case 1:} Suppose that $q_{i}=0$. Then $E_{i}[\pi_{i}(r_{i},q_{i})]=0=E_{i}[\pi_{i}(r,q_{i})]$ for all $r\in[0,1]$  and it follows that 
  $$
    0\geq\partial_{q}E_{i}[\pi_{i}(r,q)]|_{q=0}=z-Q_{-i}+\beta_{i}(a_{i}-K_{-i})r
  $$
  for all $r\in[0,1]$, as otherwise $(r_{i},q_{i})$ would not be a maximizer. The above holds in particular for $r=0$ and $r=1$; that is,
  $z-Q_{-i}\leq0$ and $z-Q_{-i}+\beta_{i}(a_{i}-K_{-i})\leq0$, or equivalently $i\in I^{q}_{0}$. We have $r_{i}=0$ by our convention and $k_{i}=0$ is clear.\\[-.5em]
  
  For the remaining cases, suppose that $q_{i}>0$. Then $q_{i}$ is an interior maximum and $\partial_{q}E_{i}[\pi_{i}(r_{i},q_{i})]=0$ which yields that $q_{i}=q_{i}(r_{i})$ for 
  $
    q_{i}(r)=\frac12\frac{1}{1+\beta_{i} r^{2}} [ z - Q_{-i} + \beta_{i} (a_{i}-K_{-i})r].
  $
  Moreover, for any $r\in[0,1]$, we have
  $
    E_{i}[\pi_{i}(r,q_{i}(r))] = \frac{[ z - Q_{-i} + \beta_{i} (a_{i}-K_{-i})r]^{2}}{4(1+\beta_{i} r^{2})}
  $
  and
  \begin{align*}
    \partial_{r}E_{i}[\pi_{i}(r,q_{i}(r))] 
    & = \frac{ z - Q_{-i} + \beta_{i} (a_{i}-K_{-i})r}{2(1+\beta_{i} r^{2})^{2}} \;\times \\[-.1em]
    & \quad\; \left\{ \beta_{i} (a_{i}-K_{-i}) (1+\beta_{i} r^{2}) - [ z - Q_{-i} + \beta_{i} (a_{i}-K_{-i})r] \beta_{i} r \right\}\\[.3em]
    &=\frac{\beta_{i} [ z - Q_{-i} + \beta_{i} (a_{i}-K_{-i})r]}{2(1+\beta_{i} r^{2})^{2}}
       \left\{a_{i}-K_{-i} - (z - Q_{-i}) r \right\}.
  \end{align*}
  
  \emph{Case 2:} Suppose that $r_{i}=0$ and $q_{i}>0$. Then 
  $
    0<q_{i}=q_{i}(0)= \frac12(z - Q_{-i})
  $
  and in particular $Q_{-i}<z$. Moreover, we must have 
  $
    0\geq \partial_{r}E_{i}[\pi_{i}(r,q_{i}(r))]|_{r=0}=\frac12 (z-Q_{-i})\beta_{i}(a_{i}-K_{-i})
  $
  which then yields $a_{i}\leq K_{-i}$. Thus, $i\in I^{r}_{0}$.\\[-.5em]
  
  \emph{Case 3:} Suppose that $r_{i}\in(0,1)$ and $q_{i}>0$. Then 
  $$
    0<q_{i}=q_{i}(r_{i})= \frac12\frac{1}{1+\beta_{i} r_{i}^{2}} [ z - Q_{-i} + \beta_{i} (a_{i}-K_{-i})r_{i}],
  $$  
  so that $z - Q_{-i} + \beta_{i} (a_{i}-K_{-i})r_{i}>0$ and at least one of the terms $z - Q_{-i}$ and $a_{i}-K_{-i}$ must be strictly positive. Now $\partial_{r}E_{i}[\pi_{i}(r_{i},q_{i}(r_{i}))]= 0$ yields  
  $
    a_{i}-K_{-i} = (z - Q_{-i}) r_{i}
  $
  and it follows both terms are positive. Moreover, $r_{i}=\frac{a_{i}-K_{-i}}{z-Q_{-i}}\in(0,1)$ shows that $0<a_{i}-K_{-i}<z-Q_{-i}$ or equivalently $i\in I_{int}$, and finally using the same formula for $r_{i}$ in the general expression for $q_{i}(r)$ also yields $q_{i}=q_{i}(r_{i})=\frac12(z - Q_{-i})$.\\[-.5em]
  
  \emph{Case 4:} Suppose that $r_{i}=1$ and $q_{i}>0$. Then
  $$
    0\leq \partial_{r}E_{i}[\pi_{i}(r,q_{i}(r))]|_{r=1} = 
    \frac{\beta_{i} [ z - Q_{-i} + \beta_{i} (a_{i}-K_{-i})]}{2(1+\beta_{i})^{2}}
       \!\left\{a_{i}-K_{-i} - z + Q_{-i} \right\}
  $$
  where once again $z - Q_{-i} + \beta_{i} (a_{i}-K_{-i})>0$ by the above formula for $q_{i}(r)$ and the assumption that $q_{i}>0$, so it follows that
  $a_{i}-K_{-i} \geq z - Q_{-i}$ and $i\in I_{1}$.
  Moreover,
  $$
    q_{i}(1) = \frac12\frac{1}{1+\beta_{i}} [ z - Q_{-i} + \beta_{i} (a_{i}-K_{-i})]= \frac12\frac{1}{1+\beta_{i}} [ A-c - Q_{-i} - \beta_{i} K_{-i})]
  $$
  after recalling that $z=A-c-d$ and $\beta_{i} a_{i}=d$.\\[-.5em]
  
  Finally, note that in view of the partition property and the fact that we have discussed all possible cases for $r_{i}$ and $q_{i}$, the above implications show a one-to-one correspondence between Cases 1--4 and the sets $I^{q}_{0}$, $I^{r}_{0}$, $I_{int}$ and~$I_{1}$.
  
  It remains to discuss the limiting case $\alpha_{i}^{2}=0$. Here the expected profit is independent of $K_{-i}$ and there is no incentive to produce with $r_{i}<1$. We readily see that either $A-c\leq Q_{-i}$ and $q_{i}=0$---that is, $i\in I^{q}_{0}$---or $A-c> Q_{-i}$ and $q_{i}=\frac12(A-c-Q_{-i})>0$; i.e., $i\in I_{1}$. With the conventions $\beta_{i}=0$, $a_{i}=\infty$ and $\beta_{i}a_{i}=d$, these are indeed the statements of the proposition in this case.
\end{proof}

The following will be helpful to prove the existence of an equilibrium.

\begin{remark}\label{rk:aPrioriBound}
  (a) The optimal quantity $q_{i}$ is continuous in $(Q_{-i},K_{-i})\in [0,A]\times\R_{+}$. Indeed, in each of the four cases of Proposition~\ref{pr:FOC}, $q_{i}$ is expressed as a continuous function $q_{i}=\varphi_{i}(Q_{-i},K_{-i})$. Each case is specified as a region in terms of $(Q_{-i},K_{-i})$ and the union of these regions is the whole space $[0,A]\times\R_{+}$. It remains to note that the functions $\varphi_{i}$ connect continuously at the boundaries. Similarly, $q_{i}$ can be represented as a continuous function of $(Q,K)$, and the same is true for $k_{i}$ instead of $q_{i}$.

  (b) The optimal quantity $q_{i}$ satisfies $q_{i}\leq (A-c)/2$.%
\end{remark} 

\begin{remark} \label{rk:exclusionPrinciple}
  In a given equilibrium, at most one of the following can occur:
  (i) some firm produces zero quantity (i.e., $I^{q}_{0}\neq\emptyset$), or
  (ii) some firm makes effort to reduce emissions (i.e., $I^{r}_{0}\cup I_{int}\neq \emptyset$).
  Indeed, Proposition~\ref{pr:FOC} shows that (i) implies $Q<z$ whereas (ii) implies $Q\geq z$. Of course, it is possible that neither (i) nor (ii) hold, meaning that all firms belong to $I_{1}$.
  To understand this exclusion economically, note that $Q\geq z$ implies $Q_{-i}\geq z=A-c-d$ for any firm $i$, in which case firm $i$ would certainly not want to produce at a cost of $c+d$ per unit. More generally, opting for $r_{i}\in(0,1)$ would be equivalent to producing part of the quantity at price $c$ and the rest at price $c+d$, and the latter again cannot be optimal. Thus, only $r_{i}=0$ is possible.
  Case (ii) is clearly the more relevant for our model. We can note that in this case, the definitions simplify to 
  $
      I_{0}^{q}=\emptyset, \quad
      I_{0}^{r}=\{i:\, K\geq a_{i}\},\quad
      I_{int}=\{i:\, K< a_{i} \mbox{ and } Q-K<z-a_{i}\},\quad
      I_{1}=\{i:\, K< a_{i}\mbox{ and }Q-\geq z-a_{i} \}.
  $
\end{remark}

The remainder of this section establishes existence and uniqueness of the equilibrium. We start with the straightforward part.

\begin{proof}[Proof of Theorem~\ref{th:mainBody}---Existence of Equilibrium]
  Fix a firm $i$ and consider arbitrary choices $(q_{j},k_{j})_{j\neq i}$ for the other firms, where $(q_{j},k_{j})\in D:=[0,(A-c)/2]^{2}$, as well as external carbon $K_{ex}\geq0$. (While we can equivalently use $(q_{j},r_{j})$ or $(q_{j},k_{j})$ to characterize a strategy, we opt for the latter in this proof because $k_{j}$'s continuity properties are more obvious.) Set $Q_{-i}=\min\{\sum_{j\neq i} q_{j},A\}$ and $K_{-i}=K_{ex}+\sum_{j\neq i} k_{j}$. 
  As mentioned in Remark~\ref{rk:aPrioriBound}\,(a), there exists an optimal response $(q_{i},k_{i})$ which depends continuously on $(Q_{-i},K_{-i})$ and hence is also continuous if seen as a function of $(q_{j},k_{j})_{j\neq i}$:
  $$
    (q_{i},k_{i})=\Phi_{i}((q_{j},k_{j})_{j\neq i}).
  $$
  Moreover, $\Phi_{i}$ maps into $D$ by Remark~\ref{rk:aPrioriBound}\,(b). 
  Forming a
  vector $\Phi$ from the functions $\Phi_{1},\dots,\Phi_{n}$ yields a map from $D^{n}$ into itself with the following property: if $(q_{j},k_{j})_{1\leq j\leq n}$ is a fixed point of $\Phi$ such that $Q:=\sum_{j} q_{j}\leq A$, then $(q_{j},k_{j})_{1\leq j\leq n}$ is a Nash equilibrium. Indeed, the latter condition on $Q$ ensures that $Q_{-i}=\sum_{j\neq i} q_{j}$ and then $(q_{i},k_{i})$ is the optimal response to the other firm's choices $(q_{j},k_{j})_{j\neq i}$. Since $\Phi$ is continuous and $\emptyset\neq D^{n}\subseteq \R^{2n}$ is compact and convex, Brouwer's fixed point theorem \citep[Corollary~17.56, p.\,583]{AliprantisBorder.06} implies that $\Phi$ has at least one fixed point. 
  
  Let $(q_{j},k_{j})_{1\leq j\leq n}$ be any fixed point and suppose for contradiction that $Q\geq A$. As $A>0$, there is at least one firm $i$ with $q_{i}>0$. We see from Proposition~\ref{pr:FOC} that $Q_{-i}=A$ implies $q_{i}=0$, so we must have $Q_{-i}<A$. But all cases in Proposition~\ref{pr:FOC} yield that $q_{i}<A-Q_{-i}$ as soon as $Q_{-i}<A$, and hence $Q=q_{i}+Q_{-i}<A$. As a result, any fixed point of $\Phi$ satisfies  $\sum_{j} q_{j}< A$ and is a Nash equilibrium. This completes the proof of existence.
\end{proof}  

\subsection{Proof of Uniqueness}

Throughout this proof we consider one equilibrium denoted as above with $(q_{j},r_{j})_{1\leq j\leq n}$, $k_{j}=r_{j}q_{j}$, $Q=\sum_{j} q_{j}$, $K=K_{ex}+\sum_{j}k_{j}$, etc., and a second equilibrium for the same parameters whose quantities are denoted with prime (i.e., $q'_{j}$, $Q'$, \dots). Our aim is to show that the two equilibria coincide.

\begin{lemma}\label{le:antimonotone}
  If $K'\geq K$ and $Q'\geq Q$, the two equilibria coincide.
\end{lemma}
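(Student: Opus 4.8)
The idea is to compare the two equilibria firm by firm using the feedback formulas of Proposition~\ref{pr:FOC}, exploiting that every firm's optimal $q_i$ and $k_i$ are weakly decreasing in the aggregates $Q_{-i}$ and $K_{-i}$ (substitutes). Since $Q'_{-i} = Q' - q'_i \geq Q - q'_i$ is not automatic, I would instead work with the representations in terms of the \emph{total} aggregates $(Q,K)$ and $(Q',K')$, which Proposition~\ref{pr:FOC} also provides. The strategy is to assume $K'\ge K$ and $Q'\ge Q$, sum an appropriate per-firm inequality over all firms, and derive a contradiction unless equality holds throughout.

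\textbf{Step 1: color stability under the ordering.} First I would check that moving from $(Q,K)$ to $(Q',K')$ with both coordinates weakly larger can only push a firm ``down'' the color ordering green $\to$ orange $\to$ red (or from any color to white, but recall white is excluded once some firm makes effort; here I would handle the $I_0^q$ case separately or note $Q,Q'<z$ is incompatible with largeness). Concretely: the defining inequalities for $I_0^r$, $I_{int}$, $I_1$ in terms of $(Q,K)$ show that increasing $Q$ and $K$ can only move a firm from $I_{int}$ toward $I_1$, and from $I_0^r$ toward $I_{int}$ or $I_1$. This is a routine check of the region inequalities $K\ge a_i$, $Q-K<z-a_i$, etc.

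\textbf{Step 2: per-firm monotonicity of $q_i$ and $k_i$ in $(Q,K)$.} Using the ``$Q,K$'' formulas in Proposition~\ref{pr:FOC}: a green firm has $q_i = z-Q$, so $q'_i\le q_i$ and $k'_i=k_i=0$; an orange firm has $q_i=z-Q$ and $k_i=a_i-K$, so both decrease; a red firm has $q_i=k_i=\frac{1}{1+\beta_i}[A-c-Q-\beta_i K]$, again decreasing in both $Q$ and $K$. Combined with Step~1 (a firm that changes color moves from a case with larger values to one with smaller, which one verifies is consistent), we get $q'_i\le q_i$ and $k'_i\le k_i$ for \emph{every} firm $i$. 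Summing over $i$ gives $Q'=\sum q'_i\le\sum q_i=Q$ and $K'=K_{ex}+\sum k'_i\le K_{ex}+\sum k_i=K$. Together with the hypotheses $Q'\ge Q$, $K'\ge K$ this forces $Q'=Q$ and $K'=K$.

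\textbf{Step 3: from equal aggregates to equal equilibria.} Once $Q'=Q$ and $K'=K$, each firm faces the identical aggregate environment, so $Q'_{-i}=Q'-q'_i$ and $K'_{-i}=K'-k'_i$; but actually it is cleaner to invoke uniqueness of the best response: Proposition~\ref{pr:FOC} gives the color and the values $(q_i,k_i)$ as functions of $(Q,K)$ alone (the ``$Q,K$'' formulas), hence $q'_i=q_i$ and $k'_i=k_i$ for all $i$, and $r'_i=r_i$ follows (with the convention $r_i=0$ when $q_i=0$). The two equilibria coincide.

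\textbf{Main obstacle.} The delicate point is Step~1--2 together: one must ensure that a firm \emph{changing color} between the two equilibria does not break the per-firm inequality $q'_i\le q_i$, $k'_i\le k_i$. This requires checking that the feedback formulas ``connect monotonically'' across the color boundaries in the direction of increasing $(Q,K)$ — essentially a continuity-plus-monotonicity argument on the piecewise-defined maps $(Q,K)\mapsto q_i$ and $(Q,K)\mapsto k_i$, which by Remark~\ref{rk:aPrioriBound}(a) are continuous; one then argues monotonicity holds on each piece and the pieces are traversed in the right order as $(Q,K)$ increases coordinatewise. I would spell this out carefully as it is the crux; the rest is bookkeeping.
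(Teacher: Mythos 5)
Your proposal is correct and is essentially the paper's own argument: represent each firm's optimal $q_{i}$ and $k_{i}$ as continuous functions of the totals $(Q,K)$ that are nonincreasing on each color region (Proposition~\ref{pr:FOC} together with Remark~\ref{rk:aPrioriBound}(a)), deduce $q_{i}'\le q_{i}$ and $k_{i}'\le k_{i}$ for every firm, sum over $i$, and force equality against the hypotheses $Q'\ge Q$, $K'\ge K$. One small correction: the color-transition direction claimed in your Step 1 is backwards---increasing $Q$ and $K$ pushes a firm toward green/white rather than toward red (e.g.\ once $K\ge a_{i}$ an orange firm lands in $I_{0}^{r}$)---but this is harmless, since the continuity-plus-piecewise-monotonicity argument you identify as the real justification (and which is exactly the paper's route) does not depend on that direction.
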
 

\begin{proof}
  As discussed in Remark~\ref{rk:aPrioriBound}\,(a), the optimal quantity $q_{i}$ of firm $i$ is a continuous function $q_{i}=\varphi(K,Q)$, and Proposition~\ref{pr:FOC}  shows that $\varphi$ is nonincreasing in both~$Q$ and~$K$. 
In particular, if $K'\geq K$ and $Q'\geq Q$, it follows that 
  $
    q'_{i}=\varphi_{i}(K',Q')\leq \varphi_{i}(K,Q)=q_{i}.
  $
Summing this over $i$ yields $Q'\leq Q$ and we deduce that $Q'= Q$ and $q'_{i}=q_{i}$ for all $1\leq i\leq n$. The same arguments apply to $k'_{i}$ and $k_{i}$.
\end{proof}

\begin{lemma}\label{le:groupFeedback}
   Let $G\subseteq (I^{r}_{0}\cup I_{int})$ consist of $m\in\{0,\dots,n\}$ firms and let $H$ denote the remaining $n-m$ firms. If $Q_{G}=\sum_{j\in G} q_{j}$ and $Q_{H}=Q-Q_{G}$ are the total quantities of those groups in a given equilibrium, then
  $$
    Q_{G} = \frac{m}{1+m}(z-Q_{H}).
  $$    
  Consider a second equilibrium (denoted with primes) and assume $G\subseteq (I^{r\prime}_{0}\cup I'_{int})$. Then
  $$
    Q'_{G} - Q_{G} =  - \frac{m}{1+m}(Q'_{H}-Q_{H})\quad\mbox{and}\quad Q' - Q = \frac{1}{1+m}(Q'_{H}-Q_{H}).
  $$  
\end{lemma}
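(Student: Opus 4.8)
The plan is to use the feedback formulas of Proposition~\ref{pr:FOC} directly. The crucial observation is that parts (ii) and (iii) of that proposition give \emph{the same} expression for the optimal quantity of green and orange firms: if $i\in I^{r}_{0}\cup I_{int}$, then $q_{i}=\tfrac12(z-Q_{-i})=z-Q$, where $Q=Q_{-i}+q_{i}$ is the aggregate quantity in the equilibrium. (For a green firm $k_{i}=0$ while for an orange firm $k_{i}>0$, but the quantity formula is identical.) Since every $i\in G$ is either green or orange, this identity holds for all $m$ firms in $G$.

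First I would sum the identity $q_{i}=z-Q$ over $i\in G$. Writing $Q=Q_{G}+Q_{H}$ gives $Q_{G}=m(z-Q)=m(z-Q_{G}-Q_{H})$, and solving this linear equation for $Q_{G}$ yields $Q_{G}=\tfrac{m}{1+m}(z-Q_{H})$, which is the first claim. When $m=0$ the sum is empty, $Q_{G}=0$, and the identity holds trivially.

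Next, the identical computation applies verbatim to the second equilibrium: by hypothesis $G\subseteq I^{r\prime}_{0}\cup I'_{int}$, so $q'_{i}=z-Q'$ for every $i\in G$, hence $Q'_{G}=\tfrac{m}{1+m}(z-Q'_{H})$. Subtracting the two identities cancels the $z$-terms and gives $Q'_{G}-Q_{G}=-\tfrac{m}{1+m}(Q'_{H}-Q_{H})$. Finally, adding $Q'_{H}-Q_{H}$ to both sides and using $Q=Q_{G}+Q_{H}$, $Q'=Q'_{G}+Q'_{H}$ yields $Q'-Q=\bigl(1-\tfrac{m}{1+m}\bigr)(Q'_{H}-Q_{H})=\tfrac{1}{1+m}(Q'_{H}-Q_{H})$.

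There is no substantial obstacle here; the only point requiring care is to use the form of the feedback expressed in terms of the \emph{total} quantity $Q$ (rather than $Q_{-i}$), since that is what makes the sum over $G$ collapse into a single linear equation in $Q_{G}$ and $Q_{H}$. The hypothesis that $G$ lies in the union of the green and orange buckets in \emph{both} equilibria is exactly what is needed so that the argument can be run twice without change.
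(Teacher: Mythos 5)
Your proof is correct and follows essentially the same route as the paper: both rest on the observation that every firm in $I^{r}_{0}\cup I_{int}$ produces the common quantity $q_{i}=z-Q$, sum this over $G$ to get the linear relation $Q_{G}=m(z-Q_{G}-Q_{H})$, and then take differences between the two equilibria. No gaps.
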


\begin{proof}
  Proposition~\ref{pr:FOC} shows that all $i\in G$ produce the common quantity 
  $
    q_{i}=z-Q= z- Q_{G} - Q_{H}.
  $
  Summing over $i\in G$ yields that $Q_{G} = m(z - Q_{G} - Q_{H})$ which is the first assertion.
  The same result holds in the second equilibrium and now the assertion about $Q'_{G}$ follows by taking differences. 
\end{proof} 

As $\frac{m}{1+m}\in [0,1)$, Lemma~\ref{le:groupFeedback} shows that given a change in $Q_{H}$, the group~$G$ would react by partially, but not fully, compensating that change. This property of strategic substitutes is the driving force in the following key lemma.

\begin{lemma}\label{le:downcrossingUniqueness}
  Let $K'\geq K$ and $(I^{q\prime}_{0}\cup I'_{1})\subseteq (I^{q}_{0}\cup I_{1})$. Then the two equilibria coincide.
\end{lemma}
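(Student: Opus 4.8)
The plan is to combine the "color-bucket" feedback identities of Lemma~\ref{le:groupFeedback} with the substitutes structure of Proposition~\ref{pr:FOC} to force $Q'\le Q$, and then invoke Lemma~\ref{le:antimonotone}. Write $G = I^r_0\cup I_{int}$ for the set of green-and-orange firms of the first equilibrium and $H = I^q_0\cup I_1$ for the rest (white and red); let $m=\card G$. The hypothesis $(I^{q\prime}_0\cup I'_1)\subseteq H$ says precisely that every firm in $G$ is, in the second equilibrium, again green or orange, i.e.\ $G\subseteq (I^{r\prime}_0\cup I'_{int})$, so Lemma~\ref{le:groupFeedback} applies to $G$ in both equilibria and gives
\[
  Q'-Q = \frac{1}{1+m}\,(Q'_H - Q_H).
\]
Thus it suffices to show $Q'_H\le Q_H$; then $Q'\le Q$, and since $K'\ge K$ by hypothesis, Lemma~\ref{le:antimonotone} (in the form: $K'\ge K$ and $Q\ge Q'$ imply the equilibria coincide — which holds by symmetry of that lemma) finishes the proof.

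To bound $Q_H'$, analyze the firms in $H$ one by one. A firm $i\in H$ is either white ($i\in I^q_0$, contributing $q_i=0$) or red ($i\in I_1$, with $q_i = k_i = \tfrac12\tfrac{1}{1+\beta_i}[A-c-Q_{-i}-\beta_i K_{-i}]^+$). In the second equilibrium firm $i$ has some color, but its quantity is still given by the universal continuous feedback $q'_i = \varphi_i(K',Q')$ which Proposition~\ref{pr:FOC} shows is nonincreasing in both arguments. The subtlety is that we do not yet know the sign of $Q'-Q$, so we cannot directly compare $\varphi_i(K',Q')$ with $\varphi_i(K,Q)$. The idea is instead to argue by contradiction: suppose $Q'>Q$. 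Combined with $K'\ge K$ and monotonicity of $\varphi_i$, this gives $q'_i\le q_i$ for \emph{every} firm $i\in\{1,\dots,n\}$ — including the firms in $G$ — whence $Q'\le Q$, a contradiction. Hence $Q'\le Q$, and we are done as above.

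Let me re-examine that last move, because it is the crux and it is almost too clean. If $Q'>Q$ and $K'\ge K$, then for each firm $q'_i=\varphi_i(K',Q')\le \varphi_i(K,Q)=q_i$ by monotonicity, so summing gives $Q'\le Q$ — contradiction. So in fact $Q'\le Q$ unconditionally, and then $K'\ge K$ together with Lemma~\ref{le:antimonotone} gives equality of the equilibria. The hypothesis $(I^{q\prime}_0\cup I'_1)\subseteq(I^q_0\cup I_1)$ is what is actually needed to make this rigorous: the feedback representation $q_i=\varphi_i(K,Q)$ from Remark~\ref{rk:aPrioriBound}(a) and Proposition~\ref{pr:FOC} is valid regardless of color, but to carry the step cleanly one wants to know that no firm in $G$ "escapes" to white or red in the second equilibrium — that is exactly the containment hypothesis, which via Lemma~\ref{le:groupFeedback} pins down the relation $Q'-Q = \frac{1}{1+m}(Q'_H-Q_H)$ and prevents a consistency loophole where a firm leaving $G$ could produce more. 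The \textbf{main obstacle}, then, is bookkeeping: making sure the monotonicity of $\varphi_i$ is genuinely joint (nonincreasing in $K$ and in $Q$ \emph{simultaneously}, with strict-versus-weak inequalities handled) and that the color-set containment is used to rule out the degenerate possibility $Q'=Q$, $K'>K$ with some firms shifting buckets; here one revisits the four cases of Proposition~\ref{pr:FOC} to check that in each case $\varphi_i$ is strictly decreasing in the relevant variable whenever $q_i>0$, so that $K'>K$ forces a strict decrease somewhere unless all carbon-emitting firms are pinned at a constraint, which the hypothesis precludes.
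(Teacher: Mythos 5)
There is a genuine gap at the crucial step. Lemma~\ref{le:antimonotone} is stated and proved only for the case where \emph{both} totals move weakly in the same direction: $K'\geq K$ \emph{and} $Q'\geq Q$. Its proof compares $q'_i=\varphi_i(K',Q')$ with $q_i=\varphi_i(K,Q)$ using that $\varphi_i$ is nonincreasing in both arguments, which requires both arguments to increase together. Your argument ends in the mixed configuration $K'\geq K$ and $Q'\leq Q$ and then invokes the lemma ``by symmetry.'' No such symmetry exists: in the mixed case the two arguments of $\varphi_i$ move in opposite directions and monotonicity gives no comparison at all. An alternative equilibrium with larger carbon but smaller quantity is precisely the scenario the uniqueness argument must work to exclude (this is the two-dimensional difficulty highlighted after Theorem~\ref{th:mainBody}), so appealing to symmetry here assumes away the hard part. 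Your observation that $K'\geq K$ alone forces $Q'\leq Q$ (via the contradiction argument) is correct, but it points in the unhelpful direction; tellingly, it never uses the hypothesis $(I^{q\prime}_{0}\cup I'_{1})\subseteq(I^{q}_{0}\cup I_{1})$, and your closing paragraph only gestures at ``bookkeeping'' without supplying the argument that hypothesis is needed for.

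What the hypothesis actually buys is the \emph{opposite} inequality $Q'\geq Q$, after which Lemma~\ref{le:antimonotone} applies exactly as stated. Concretely: since $G=I^{r}_{0}\cup I_{int}\subseteq I^{r\prime}_{0}\cup I'_{int}$, one checks $k'_i\leq k_i$ for every $i\in G$ (trivial if $i\in I^{r\prime}_{0}$; from $k_i=a_i-K$, $k'_i=a_i-K'$ and $K'\geq K$ if $i\in I_{int}\cap I'_{int}$; and $i\in I^{r}_{0}\cap I'_{int}$ is impossible since it would give $K\geq a_i>K'$). Hence $K'_G\leq K_G$, so the carbon increase must come from $H=I^{q}_{0}\cup I_{1}$, i.e.\ $K'_H\geq K_H$. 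On $H$ we have $q_i=k_i$, so $K_H=Q_H$, while $r'_j\leq 1$ for all firms gives $Q'_H\geq K'_H$; therefore $Q'_H\geq Q_H$, and Lemma~\ref{le:groupFeedback} yields $Q'-Q=\frac{1}{1+m}\,(Q'_H-Q_H)\geq 0$. This transfer of the carbon increase into a quantity increase through the bucket $H$ is the key idea missing from your proposal; without it (or some substitute handling of the mixed case) the proof does not close.
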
 

\begin{proof}
  Let $G=I^{r}_{0}\cup I_{int}$ and $H=I^{q}_{0}\cup I_{1}$. The assumption ensures that 
  $G\subseteq (I^{r\prime}_{0}\cup I'_{int})$.
  We claim that $k'_{i}\leq k_{i}$ for all $i\in G$. Indeed, this is trivial if $i\in I^{r\prime}_{0}$. If $i\in I_{int}\cap I'_{int}$, it follows immediately from $K'\geq K$ and the formulas for $k_{i},k'_{i}$ in Proposition~\ref{pr:FOC}\,(iii). Finally, $i\in I^{r}_{0}\cap I'_{int}$ is impossible since it would imply that  $K\geq a_{i}>K'$. Thus, the claim holds and in particular $K'_{G}\leq K_{G}$ (notation of Lemma~\ref{le:groupFeedback}). As a consequence, any increase in total carbon must come from $H$; that is, $K'_{H}\geq K_{H}$. 

Note that firms $i\in H=I^{q}_{0}\cup I_{1}$ satisfy $q_{i}=k_{i}$ (either both quantity and carbon are zero or $r_{i}=1$) and thus $K_{H}=Q_{H}$. On the other hand, $r'_{j}\leq 1$ for all firms, so that $Q'_{H}\geq K'_{H}$. Therefore, $K'_{H}\geq K_{H}$ yields that $Q'_{H}\geq Q_{H}$. Now Lemma~\ref{le:groupFeedback} shows that $Q'\geq Q$ and we conclude by applying Lemma~\ref{le:antimonotone}.
\end{proof} 

It is intuitive that if we (exogenously) add carbon to an equilibrium, any firm that previously used  green technology will continue to do so---and even to a larger extent. This suggests that the second condition in Lemma~\ref{le:downcrossingUniqueness} is always verified, as confirmed by the following.

\begin{lemma}\label{le:upcrossingUniqueness}
  Let $K'\geq K$. Then $(I^{q\prime}_{0}\cup I'_{1})\subseteq (I^{q}_{0}\cup I_{1})$.
\end{lemma}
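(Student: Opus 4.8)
The plan is to show that the set $I^{q}_{0}\cup I_{1}$ is monotone with respect to adding carbon: if a firm produces zero or uses business-as-usual technology in the equilibrium with more carbon ($K'$), it must have done so in the equilibrium with less carbon ($K$). Equivalently, taking the contrapositive at the level of individual firms, I would show that if a firm $i$ belongs to $I^{r}_{0}\cup I_{int}$ in the $K$-equilibrium (i.e.\ it makes some mitigation effort, $r_i<1$), then it also belongs to $I^{r\prime}_{0}\cup I'_{int}$ in the $K'$-equilibrium. Since the four sets partition $\{1,\dots,n\}$ in each equilibrium, this is exactly the claimed inclusion.

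First I would recall, from the $(Q,K)$-version of the characterization in Proposition~\ref{pr:FOC}, the defining inequalities: $i\in I^{r}_{0}\cup I_{int}$ is equivalent to $K<a_i$ together with $Q-K<z-a_i$ being replaced appropriately — more precisely, $i\in I^{r}_{0}$ iff $K\ge a_i$ and $Q<z$, while $i\in I_{int}$ iff $K<a_i$ and $Q-K<z-a_i$; whereas $i\in I_1$ iff $z-Q+\beta_i(a_i-K)>0$ and $Q-K\ge z-a_i$, and $i\in I^{q}_0$ iff $z-Q+\beta_i(a_i-K)\le0$ and $Q\ge z$. The cleanest route is to argue directly from the feedback formulas: since $i\in I^{r}_{0}\cup I_{int}$ in the first equilibrium, Proposition~\ref{pr:FOC}(ii)--(iii) gives $q_i=z-Q$, hence $Q<z$ (as $q_i>0$), and more importantly one reads off that $Q = z - q_i$ with $q_i$ determined by the other firms. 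The key is that the ``effort'' region is characterized by $Q<z$ and a bound coupling $K$ and $a_i$; I would like to show both are preserved.

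The structure of the argument I would run is a dichotomy on the total quantities. If $Q'\le Q$, then combined with $K'\ge K$ one is tempted to conclude monotonicity of each $q_i$ directly, but this is exactly the subtle two-dimensional situation the paper warns about, so instead I would proceed as follows. Suppose for contradiction that some firm $i$ with $i\in I^{r}_{0}\cup I_{int}$ has $i\in I^{q\prime}_{0}\cup I'_{1}$. In the case $i\in I'_1$: then $r'_i=1$, so firm $i$'s emissions in the primed equilibrium are $k'_i=q'_i$, and the first-order analysis (Case 4 in the proof of Proposition~\ref{pr:FOC}) gives $a_i-K'\ge z-Q'$, i.e.\ $Q'-K'\ge z-a_i$; whereas $i\in I_{int}$ gave $a_i-K>(z-Q)r_i$ with $r_i<1$, i.e.\ roughly $Q-K<z-a_i$, and $i\in I^r_0$ gave $K\ge a_i$. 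Using $K'\ge K$, the inequality $a_i-K'\ge z-Q'$ forces $Q'\ge z+K'-a_i\ge z+K-a_i > z - (z-Q) = Q$ when $i\in I_{int}$ (using $K-a_i > -(z-Q)$, wait — need care here); the point is to derive $Q'>Q$. Then I invoke Lemma~\ref{le:groupFeedback}: let $G$ be the set of ``effort'' firms common to both equilibria (or argue the group $G=I^r_0\cup I_{int}$ shrinks), and use the transversality relation there. The case $i\in I^{q\prime}_0$ is similar but easier: it forces $Q'\ge z$, while $i$'s membership in the effort bucket gave $Q<z$, so again $Q'>Q$, and one chases the group-feedback identities to a contradiction with $K'\ge K$.

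The main obstacle, as I see it, is handling the case where the quantities move in the ``wrong'' relative direction — the honest content of the lemma is precisely that $K'\ge K$ cannot be accompanied by the effort-bucket shrinking, and the only tool that rules this out is the rigidity coming from Lemma~\ref{le:groupFeedback} (the group $G$ of effort firms absorbs at most a fraction $\tfrac{m}{m+1}$ of any change from $H$). So the crux is to set up the bookkeeping so that ``$i$ leaves the effort bucket going from $K$ to $K'$'' is translated into a strict inequality $Q'>Q$ or $K'_H$ and $Q'_H$ comparisons that, fed through Lemma~\ref{le:groupFeedback}, contradict $K'\ge K$ on the firms that genuinely control total carbon. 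I would expect a short lemma-free computation using the explicit thresholds $a_i,\,z$ for the boundary firm $i$, followed by a one-line appeal to the monotonicity/feedback structure, to close it; the risk is getting the inequality directions exactly right at the boundary cases $r_i\to 0$ and $r_i\to 1$, where membership can flip, which I would dispatch using Remark~\ref{rk:boundaries}.
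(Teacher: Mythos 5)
Your case analysis is on the right track and the inequality bookkeeping is essentially correct: in each of the four possible violations ($i\in I'_{1}\cap I^{r}_{0}$, $i\in I'_{1}\cap I_{int}$, $i\in I^{q\prime}_{0}\cap I^{r}_{0}$, $i\in I^{q\prime}_{0}\cap I_{int}$) the defining inequalities of Proposition~\ref{pr:FOC} together with $K'\geq K$ do yield $Q'\geq Q$ (e.g.\ $Q'-K'\geq z-a_{i}>Q-K$ when $i\in I_{int}\cap I'_{1}$, and $Q'\geq z>Q$ in the $I^{q\prime}_{0}$ cases), and this is exactly the first half of the paper's argument. The gap is in how you propose to finish. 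You name Lemma~\ref{le:groupFeedback} as ``the only tool'' that turns this into a contradiction with $K'\geq K$, but that lemma is a purely quantity-side identity (it relates $Q'_{G}-Q_{G}$ to $Q'_{H}-Q_{H}$ and never involves carbon), so it cannot by itself contradict a carbon inequality; indeed there is nothing contradictory about $K'\geq K$ together with $Q'\geq Q$ per se---both hold (with equality) at the unique equilibrium. What actually closes each case is Lemma~\ref{le:antimonotone}, which you never invoke: since each firm's best response is nonincreasing in $(Q,K)$ jointly, $K'\geq K$ and $Q'\geq Q$ force the two equilibria to coincide, and then firm $i$ would lie in two different members of the partition $I^{q}_{0},I^{r}_{0},I_{int},I_{1}$ of one and the same equilibrium---a contradiction. (Lemma~\ref{le:groupFeedback} is the tool for the companion Lemma~\ref{le:downcrossingUniqueness}, and even there it serves only to reach the hypotheses of Lemma~\ref{le:antimonotone}.)

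A smaller point: in the case $i\in I'_{1}\cap I^{r}_{0}$ no auxiliary lemma is needed at all. From $K'\geq K\geq a_{i}$, the first condition of $i\in I'_{1}$, namely $z-Q'+\beta_{i}(a_{i}-K')>0$, forces $z-Q'>0$, which is directly incompatible with the second condition $Q'-K'\geq z-a_{i}$, i.e.\ $z-Q'\leq a_{i}-K'\leq 0$. Your route via $Q'\geq z>Q$ would also work, but again only after Lemma~\ref{le:antimonotone} is brought in to conclude.
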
 

\begin{proof}
  We use the inequalities in Proposition~\ref{pr:FOC} to show that each of the possible violations leads to a contradiction.  %
  
  Let $i\in I'_{1}\cap I^{r}_{0}$, then $K\geq a_{i}$ and hence $K'\geq a_{i}$. As $i\in I'_{1}$ means in particular that $z-Q'+\beta_{i}(a_{i}-K')>0$, it follows that $z-Q'>0$. Together, we have $z-Q'>0\geq a_{i}-K'$, which contradicts $i\in I'_{1}$. 
  
  Next, suppose that $i\in I'_{1}\cap I_{int}$.  Then $i\in I'_{1}$ yields $Q-K<z-a_{i}$ whereas $i\in I'_{1}$ implies $Q'-K'\geq z-a_{i}$. Thus, $Q'-Q\geq K'-K\geq0$. Now Lemma~\ref{le:antimonotone} shows that the equilibria coincide and in particular $I'_{1}\cap I_{int}=\emptyset$.
  
  Let $i\in I^{q\prime}_{0}\cap I^{r}_{0}$, then $Q'\geq z$ and $z> Q$, thus $Q'\geq Q$ and we again obtain a contradiction via Lemma~\ref{le:antimonotone}.
  
  Finally, if $i\in I^{q\prime}_{0}\cap I_{int}$, then $Q-z<K-a<0\leq Q'-z$ and in particular $Q'\geq Q$, and we conclude by Lemma~\ref{le:antimonotone}.
\end{proof}

\begin{proof}[Proof of Theorem~\ref{th:mainBody}---Uniqueness of Equilibrium]
  Given two equilibria, we may label them such that $K'\geq K$. Then Lemma~\ref{le:upcrossingUniqueness} yields $(I^{q\prime}_{0}\cup I'_{1})\subseteq (I^{q}_{0}\cup I_{1})$ and now Lemma~\ref{le:downcrossingUniqueness} shows that the two equilibria coincide.
\end{proof} 

\section{Proofs and Elaboration for Section~\ref{se:examples}}\label{se:appendixExamples}

In Section~\ref{se:twoFirms} we discussed the case $n=2$ of two firms under the condition that $z>d$ or equivalently $A>c+2d$. Here, we treat the complete state space of possible parameters. Recall that a firm producing nothing is called \emph{white}---it clearly emits zero carbon, but this is not necessarily because the firm cares about climate impacts. As before we label the two firms such that $a_{1}\leq a_{2}$.

In the trivial case $A-c\leq 0$, consumers are not willing to pay the marginal cost of production and hence both firms are white.
The case where $A-c>0$ but $z\leq 0$ has two regimes: white-white if $a_1 > (1/d +2/a_2)^{-1}$ and white-red if $a_1 \leq (1/d +2/a_2)^{-1}$. All these are special cases of Example~\ref{ex:noGreenTech}.

We now turn to the nondegenerate case $z>0$. With respect to Section~\ref{se:twoFirms}, there is an additional regime ``white-red.'' Its appearance mandates that some of the other regimes carry additional parameter restrictions (which are always verified when $z>d$ as in Section~\ref{se:twoFirms}); this concerns the red-red and green-red regimes. The other regimes are unchanged.

\begin{figure}[htb]
\begin{center}
\begin{tikzpicture}[scale=.60]%
\fill [color = myorange, opacity=0.9] (0,0) -- (1,2) -- (3,3) -- (2,1) -- cycle;
\fill [pattern = flexcheckerboard_greenorange, opacity=0.9] (0,0) -- (1,2) -- (0,2) -- cycle;
\fill [pattern = flexcheckerboard_greenorange, opacity=0.9] (0,0) -- (2,1) -- (2,0) -- cycle;

\fill [pattern = flexcheckerboard_greenred, opacity=0.9] (2,0) -- (2,1) --
plot [smooth,domain=1:3] ({16*\x/(9-\x)},\x) -- (8,0) -- cycle;
\fill [pattern = flexcheckerboard_greenred, opacity=0.9] (0,2) -- (1,2) --
plot [smooth,domain=1:3] (\x,{16*\x/(9-\x)}) -- (0,8) -- cycle;

\fill [pattern = flexcheckerboard_orangered, opacity=0.9] (2,1) -- 
plot [smooth,domain=1:3] ({16*\x/(9-\x)},\x) -- (3,3) -- cycle;
\fill [pattern = flexcheckerboard_orangered, opacity=0.9] (1,2) -- 
plot [smooth,domain=1:3] (\x,{16*\x/(9-\x)}) -- (3,3) -- cycle;

\fill [pattern = flexcheckerboard_whitered, opacity=0.9] (8,0) -- (8,3) --
plot [smooth,domain=3:3.53] ({24*\x/(12-\x)},\x) -- (10,0) -- cycle;
\fill [pattern = flexcheckerboard_whitered, opacity=0.9] (0,8) -- (3,8) --
plot [smooth,domain=3:3.53] (\x, {24*\x/(12-\x)}) -- (0,10) -- cycle;

\fill [color=myred,opacity=0.9] (3,3) -- (8,3) -- 
plot [smooth,domain=3:3.53] ({24*\x/(12-\x)},\x) -- (10,10) -- (3.53,10) --
plot [smooth,domain=3.53:3] (\x, {24*\x/(12-\x)}) -- cycle;

\draw[->] (0,0) -- (10.5,0) node[right] {$a_1$};
\draw[->] (0,0) -- (0,10.5) node[above] {$a_2$};

\draw (0,0) -- (2,1) -- (3,3) {};
\draw (0,0) -- (1,2) -- (3,3) {};

\draw (2,0) -- (2,1) {};
\draw (0,2) -- (1,2) {};

\draw (3,3) -- (3,8) -- (0,8) {};
\draw (3,3) -- (8,3) -- (8,0) {};

\draw [smooth,domain=1:3] plot (\x,{16*\x/(9-\x)});
\draw [smooth,domain=1:3] plot ({16*\x/(9-\x)},\x);

\draw [smooth,domain=3:3.53] plot (\x,{24*\x/(12-\x)});
\draw [smooth,domain=3:3.53] plot ({24*\x/(12-\x)},\x);

\draw (3,0) -- (3,-0.1) node[below] {$z$};
\draw (0,3) -- (-0.1,3) node[left] {$z$};
\end{tikzpicture}
\vspace{-1em}
\end{center}
\caption{Regimes of equilibria for the case of two. The white-red regimes cease to exist when $z> d$.}
\label{fi:twoFirmsAppendix}
\end{figure}
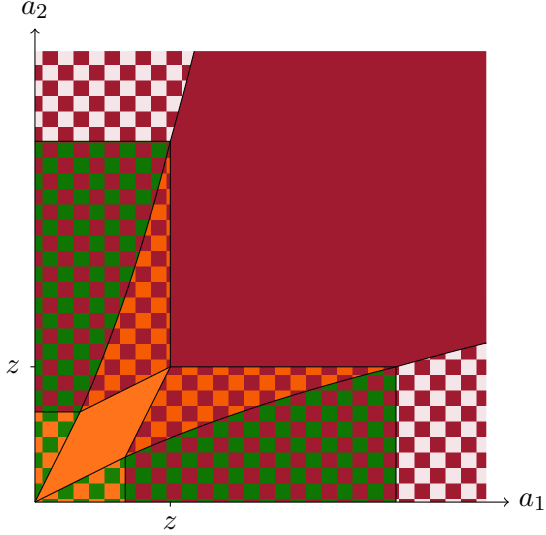

\begin{enumerate}
\item The regimes green-green, orange-orange, green-orange and orange-red are unchanged.

\item Red-red has the additional restriction $a_1a_2 > d(a_2-2a_1)$. %

\item Green-red has the additional restriction $(d-z)a_2 < 2zd$. %

\item \emph{White-red.} Suppose that $(d-z)a_2 \geq 2zd$ and $d(a_2-2a_1) \geq a_1a_2$. Then  $q_{1}=k_{1}=0$ and $q_2=k_2=\frac{A-c}{2(1+\beta_2)}$. This regime does not exist when $z>d$ as that renders the first condition impossible. Moreover, as seen in Figure~\ref{fi:twoFirmsAppendix}, this regime occurs only when $a_{1}$ is very large and $a_{2}$ is small, or vice versa. 
\end{enumerate}

Finally, we report the proofs that were omitted in Section~\ref{se:examples}.

\begin{proof}[Proof for Examples~\ref{ex:allInt} and~\ref{ex:noneAtOne}]
  It is straightforward that check that $Q=\sum_{j}q_{j}$ and $K=K_{ex}+\sum_{j}r_{j}q_{j}$  with the stated definitions. The optimality for each firm can then be proved by directly verifying the optimality conditions of Proposition~\ref{pr:FOC}. 
\end{proof}

\begin{proof}[Proof for Example~\ref{ex:noGreenTech}]
  Note that $a_{1}\leq \cdots \leq a_{n}$ is equivalent to  $\xi_{1}\leq \cdots\leq \xi_{n}$ and that $a_{1}\geq z + K_{ex}$ implies $a_{j}\geq z + K_{ex}$ for $1\leq j\leq n$.
  It is straightforward that check that $Q=\sum_{j}q_{j}$  with the stated definitions. Using the definition of $n_{0}$, we verify that $\frac{1}{1+\beta_{i}}[z-Q+\beta_{i}(a_{i}-K)]=\xi_{i}-Q\geq0$ for $i>n_{0}$ whereas the same quantity is $<0$ for $i\leq n_{0}$. To conclude that firms $i>n_{0}$ belong to $I_{1}$ (cf.\ Proposition~\ref{pr:FOC}), it only remains to observe  $z-a_{i}\leq z-a_{1} \leq -K_{ex} = Q-K$. Whereas for $i\leq n_{0}$ to belong to $I^{q}_{0}$, we need to establish that $Q\geq z$. Indeed, we can assume that $n_{0}\geq1$ as otherwise there is nothing to prove. Then the definition of $n_{0}$ leads to
  $
    Q = \frac{1}{n_{1}+1}\sum_{j > n_{0}} \xi_{j}> \xi_{n_{0}}\geq \xi_{1}.
  $
  Noting that the assumed inequality $a_{1}\geq z + K_{ex}$ can be rearranged into $\xi_{1}\geq z$, we conclude that $Q\geq z$ as desired. In summary, the optimality criteria of $i>n_{0}$ are the ones of $I_{1}$ and the criteria of $i\leq n_{0}$ are the ones of $I^{q}_{0}$. Since the stated formulas of $q_{i}$ and $k_{i}$ correspond to the ones of Proposition~\ref{pr:FOC} for these respective cases, $(q_{j},k_{j})_{1\leq j\leq n}$ indeed yield an equilibrium. Finally, the equilibrium is unique by Theorem~\ref{th:mainBody}.
\end{proof} 

\section{More on Comparative Statics}\label{se:compStatDetails}

We first derive several relationships between key quantities in the  equilibrium $(k_j,q_j)_{1 \leq j \leq n}$ associated
with a given set $(\alpha_j^2)_{1\leq j \leq n}$ of climate beliefs and fixed parameters $A,b,c,d > 0$ and 
$K_{ex} \geq 0$. Recall the definitions $I_0^r, I_{int}, I_1$ of Proposition~\ref{pr:FOC}. We will also find it useful to abbreviate 
$$
  G=I_0^r \cup I_{int}, \quad A_{int} = \sum_{j \in I_{int}}a_j,\quad B_1 = \sum_{j \in I_1}(1+\beta_j)^{-1},\quad Q_G = \sum_{j \in G} q_{j} 
$$
and similarly $Q_1 = \sum_{j \in I_1} q_{j}$, $K_{G} = \sum_{j \in G} k_{j}$ and $K_{1} = \sum_{j \in I_{1}} k_{j}$, as well as
$$
  n_0 = |I_0^r|,\quad n_{int} = |I_{int}|,\quad n_1= |I_1|,\quad m = |G| = n_0+n_{int}.
$$

\begin{proposition}\label{pr:KQexplicit}
In equilibrium, the following relations hold:
\begin{align}
Q & =Q_{0}+Q_{int}+Q_{1},\qquad K=K_{G}+K_{1}+K_{ex}, \qquad K_{1}=Q_{1}, \label{eq:lin1}\\
Q_G &= \frac{m}{m+1}(z-Q_1) = m(z-Q), \qquad Q = \frac{mz + Q_1}{m+1}, \label{eq:lin2}\\
K_G &= \frac{1}{n_{int}+1}A_{int} - \frac{n_{int}}{n_{int}+1}(K_{ex}+Q_1) = A_{int}-n_{int}K, \label{eq:lin3}\\
Q_1 &= B_1(A-c-Q_G+K_G+K_{ex})-n_1K, \label{eq:lin4} \\
K&= \frac{B_1(A-c+md) + (B_1+m+1)(A_{int}+K_{ex})}{(n_{int}+n_1+1)(m+1)-B_1n_0}, \label{eq:linK} \\
Q&= z + \frac{B_1(A-c+n_{int}d) - (n_{int}+n_1+1)z+(B_1-n_1)(A_{int}+K_{ex})}{(n_{int}+n_1+1)(m+1)-B_1n_0}. \label{eq:linQ} 
\end{align}
\end{proposition}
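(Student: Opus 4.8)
The plan is to establish the six displayed relations in two stages: first the structural identities~\eqref{eq:lin1}--\eqref{eq:lin4}, which follow essentially by inspection from the optimality conditions of Proposition~\ref{pr:FOC}, and then the two closed forms~\eqref{eq:linK}--\eqref{eq:linQ}, obtained by solving the resulting linear system for the aggregates $K$ and $Q$. Throughout I would use the standing assumption of this section that there are no white firms, so $I_0^q=\emptyset$ and the $n$ firms split into $G=I_0^r\cup I_{int}$ (of size $m$) and $I_1$ (of size $n_1=n-m$); write $Q_0=\sum_{j\in I_0^r}q_j$ and $Q_{int}=\sum_{j\in I_{int}}q_j$, so that $Q_G=Q_0+Q_{int}$.

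For~\eqref{eq:lin1} I would note that $Q$ and $K$ decompose over the color classes by definition, and that $k_i=q_i$ for every red firm by Proposition~\ref{pr:FOC}(iv), so $K_1=Q_1$. For~\eqref{eq:lin2} I would use that Proposition~\ref{pr:FOC}(ii)--(iii) forces $q_i=z-Q$ for every $i\in G$; summing over $G$ gives $Q_G=m(z-Q)$, and combining with $Q=Q_G+Q_1$ yields $Q=\tfrac{mz+Q_1}{m+1}$ and $Q_G=\tfrac{m}{m+1}(z-Q_1)$ --- equivalently, this is Lemma~\ref{le:groupFeedback} applied with $H=I_1$. For~\eqref{eq:lin3} I would observe that green firms add nothing to $K_G$ while orange firms have $k_i=a_i-K$, so $K_G=\sum_{j\in I_{int}}(a_j-K)=A_{int}-n_{int}K$; substituting $K=K_G+Q_1+K_{ex}$ and solving for $K_G$ gives the first form. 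For~\eqref{eq:lin4} I would rewrite the red-firm formula of Proposition~\ref{pr:FOC}(iv) as $q_i=(1+\beta_i)^{-1}(A-c-Q+K)-K$, sum over $I_1$ to get $Q_1=B_1(A-c-Q+K)-n_1K$, and then use the identity $-Q+K=-Q_G+K_G+K_{ex}$ (itself a consequence of~\eqref{eq:lin1}) to arrive at the stated form.

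For the second stage I would feed $Q=Q_G+Q_1$ and $K=K_G+Q_1+K_{ex}$ into~\eqref{eq:lin2}--\eqref{eq:lin3} to eliminate $Q_G$, $K_G$, and $Q_1$, leaving a $2\times2$ linear system in $(Q,K)$: one equation equates the two resulting expressions $Q_1=(m+1)Q-mz$ and $Q_1=(n_{int}+1)K-A_{int}-K_{ex}$, and the second comes from substituting these, together with $Q_G$ and $K_G$, into~\eqref{eq:lin4}. A short computation shows that the determinant of this system equals $(n_{int}+n_1+1)(m+1)-B_1n_0$, which is strictly positive: since $\beta_j>0$ for all $j$ we have $B_1=\sum_{j\in I_1}(1+\beta_j)^{-1}<n_1$, hence $B_1n_0<n_1n_0\le n_1(m+1)<(n_{int}+n_1+1)(m+1)$. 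Solving the system (Cramer's rule, or back-substitution starting from $K$) and simplifying using $z=A-c-d$ --- so that $A-c+md=z+(m+1)d$ and analogous rewrites --- then produces~\eqref{eq:linK} and~\eqref{eq:linQ}.

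The only genuinely laborious part is the bookkeeping in the second stage --- the reduction to the $2\times2$ system and the algebraic simplification of its solution into the displayed forms --- but this is pure linear algebra with no conceptual subtlety; the one substantive point is that the determinant $(n_{int}+n_1+1)(m+1)-B_1n_0$ does not vanish, which is precisely where the standing hypothesis that no firm is infinitely skeptical ($\beta_j>0$) enters.
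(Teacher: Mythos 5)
Your proposal is correct and follows essentially the same route as the paper: it derives \eqref{eq:lin1}--\eqref{eq:lin4} from Proposition~\ref{pr:FOC} (including the same rewriting of the red-firm formula and the use of Lemma~\ref{le:groupFeedback} for \eqref{eq:lin2}), and then solves the resulting $2\times 2$ linear system in $(Q,K)$ to obtain \eqref{eq:linK}--\eqref{eq:linQ}. The only addition is your explicit verification that the determinant $(n_{int}+n_1+1)(m+1)-B_1n_0$ is positive, which the paper leaves implicit in the phrase ``linearly independent system''; that check is valid and a reasonable supplement.
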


\begin{proof}
The relations in~\eqref{eq:lin1} are clear and~\eqref{eq:lin2} is a special case of Lemma~\ref{le:groupFeedback}. By Proposition~\ref{pr:FOC} we have $k_i = a_i - K$ for $i \in I_{int}$ and $k_{i}=0$ for $i\in I_0^r$, so that summing over $i\in G$ yields $K_{G}=K_{int}=A_{int}-n_{int}K$ and now $K-K_{G}=Q_{1}+K_{ex}$ yields~\eqref{eq:lin3}. For $i \in I_1$, Proposition~\ref{pr:FOC} yields that  
\begin{align*}
q_i &= (1+\beta_i)^{-1}(A-c-Q-\beta_iK) = (1+\beta_i)^{-1}(A-c-Q+K) - K \\
&= (1+\beta_i)^{-1}(A-c-Q_G + K_G + K_{ex}) - K
\end{align*}
and then summing over $i\in I_1$ shows~\eqref{eq:lin4}.
Equations~\eqref{eq:lin1}--\eqref{eq:lin4} yield the linearly independent system
\begin{align*}
  (B_{1}+m+1) Q + (n_{1}-B_{1}) K &= B_{1}(A-c)+mz,\\
  (m+1)Q-(n_{int}+1)K &= -K_{ex}-A_{int}+mz
\end{align*} 
for $Q,K$ which can then be solved to give~\eqref{eq:linK} and~\eqref{eq:linQ}.
\end{proof}
Next, we use the above formulas to compute the directional derivatives and, in particular, derive the results in Section~\ref{se:compStat}. In all that follows, we may and will assume that the direction of perturbation keeps the equilibrium in the same regime; that is, the sets $I^{r}_{0},I_{int},I_{1}$ and hence also the numbers $n_{0},n_{int},n_{1}, A_{int},B_{1}$ are constant during the perturbation. As explained in the beginning of Section~\ref{se:compStat}, this is always true after defining the boundaries appropriately (which corresponds to choosing appropriately the strict and non-strict inequalities in the definitions of $I^{r}_{0},I_{int},I_{1}$ in Proposition~\ref{pr:FOC}).
Denote by
$
  N=(n_{int}+n_1+1)(m+1)-B_1n_0>0
$
the denominator of~\eqref{eq:linK} and~\eqref{eq:linQ}. 
Using the formulas in Proposition~\ref{pr:KQexplicit} and setting $\tilde{n}=n_{0}+n_{int}+n_{1}$, we have
\begin{align*}
&\frac{\partial K}{\partial B_1} = \frac{(m + 1)[(A_{int}+K_{ex})(\tilde{n}+1) + (n_{int}+n_1 + 1)(z + (m+1)d)]}{N^2} > 0, \\
&\frac{\partial Q}{\partial B_1} = \frac{(n_{int} + 1)[(A_{int}+K_{ex})(\tilde{n}+1) + (n_{int}+n_1 + 1)(z + (m+1)d)]}{N^2} > 0, \\
&\frac{\partial K}{\partial A_{int}} = \frac{m + 1 + B_1}{N} > 0, \qquad \frac{\partial Q}{\partial A_{int}} = \frac{B_1-n_1}{N} \leq 0.
\end{align*}
(Of course, the derivatives with respect to $B_{1}$ make sense only when $I_{1}$ is not empty---otherwise there is no corresponding perturbation---and similarly for $A_{int}$.)
Combining these derivatives with the formulas for $q_{i}$ and $k_{i}$ in Proposition~\ref{pr:FOC} as well as~\eqref{eq:lin2} and~\eqref{eq:lin3}, we can then compute the following.

\begin{enumerate}
	\item Let $i \in I_0^q$. Then
	$$
	\frac{\partial k_i}{\partial A_{int}} = \frac{\partial k_i}{B_1} = 0,\qquad \frac{\partial q_i}{\partial A_{int}} = - \frac{\partial Q}{\partial A_{int}} \geq 0, \qquad 
	\frac{\partial q_i}{\partial B_1} = - \frac{\partial Q}{\partial B_1} < 0.
	$$

	\item Let $i \in I_{int}$. Then the formulas for $q_{i}$ in (i) still hold. In addition,
	\begin{align*}
	\frac{\partial k_i}{\partial B_1} &= - \frac{\partial K}{\partial B_1} < 0, %
	\qquad
	\frac{\partial k_i}{\partial a_j} = - \frac{\partial K}{\partial A_{int}} < 0 \text{ for }i \neq j\in I_{int}, \\
	\frac{\partial k_i}{\partial a_i} &= 1 - \frac{\partial K}{\partial A_{int}} \geq \frac{n_{int}(n+1)}{N} > 0, \\
  \frac{\partial r_i}{\partial a_j} &= \frac{1}{q_i}\Big(r_i\frac{\partial Q}{\partial A_{int}} - \frac{\partial K}{\partial A_{int}} \Big) < 0 \text{ for }i \neq j\in I_{int},  \\
  \frac{\partial r_i}{\partial a_i} &= \frac{1}{q_i}\Big(1 + r_i\frac{\partial Q}{\partial A_{int}} - \frac{\partial K}{\partial A_{int}} \Big) %
  = \frac{1}{q_i N}(N - m - 1 - B_1 + r_i(B_1 - n_1)) 
  > 0, \\
  \frac{\partial r_i}{\partial B_1} &= \frac{1}{q_i}\Big(r_i\frac{\partial Q}{\partial Q_1} - \frac{\partial K}{\partial Q_1}\Big)\frac{\partial Q_1}{\partial B_1} 
  = \frac{1}{q_i}\Big(\frac{r_i}{m+1} - \frac{1}{n_{int} + 1}\Big)\frac{\partial Q_1}{\partial B_1} < 0.
  \end{align*}

	\item Let $i \in I_1$ and set $b_j = (1+\beta_j)^{-1}$ for $j\in I_{1}$. Then for all $i\neq j\in I_{1}$,
	\begin{align}
	\frac{\partial k_i}{\partial A_{int}} &= \frac{\partial q_i}{\partial A_{int}}= \frac{(n_1 - B_1) - (1-b_i)(\tilde{n}+1)}{N} \in \R, \label{eq:ambiguousQuantity}\\
	\frac{\partial k_i}{\partial b_j} &= \frac{\partial q_i}{\partial b_j} = - \frac{(1-b_i)n_0 + n_{int} + 1}{n_0 + n_{int} + 1}\frac{\partial K}{\partial B_1} < 0, %
	\qquad
	\frac{\partial k_i}{\partial b_i} = \frac{\partial q_i}{\partial b_i}  > 0. \nonumber 
	\end{align}
\end{enumerate}

\begin{remark}\label{rk:ambiguousQuantity}
The sign of the derivative $\frac{\partial k_i}{\partial A_{int}}$ in~\eqref{eq:ambiguousQuantity} is ambiguous in specific circumstances. Suppose first that $i$ is the firm with the smallest coefficient $b_{i}$ among the firms in $I_{1}$, or equivalently, the largest $\alpha_{i}^{2}$. Then $B_{1}\geq n_{1}b_{i}$ and hence $\tilde{n}+1>n_{1}$ implies
\begin{equation}\label{eq:ambiguousQuantity}
  \frac{\partial k_i}{\partial A_{int}}= \frac{(n_1 - B_1) - (1-b_i)(\tilde{n}+1)}{N}<0
\end{equation}
which is the same sign as $\frac{\partial k_i}{\partial a_j}$ in (ii). 
Conversely, let $i$ be the firm with the largest coefficient $b_{i}$ among the firms in $I_{1}$ (hence among all firms). Then it may happen that 
the expression in~\eqref{eq:ambiguousQuantity} is strictly positive.
For instance, in the extreme case $\alpha_{i}^{2}=0$ we have $b_{i}=1$ and $\frac{\partial k_i}{\partial A_{int}}= \frac{n_1 - B_1}{N}$ is strictly positive as soon as there exists some firm $j\in I_{1}$ with $\alpha_{j}^{2}>0$.

The intuition for this phenomenon is as follows. Suppose that a firm in $I_{int}$ decreases climate belief, which corresponds to an increase in $A_{int}$. Then, neglecting equilibrium effects, it would emit more carbon and but keep the quantity constant.
(Other firms in $I_{int}$ may partially compensate this; the cumulative change of $I_{int}$ would still be an increase in carbon, with a reduction in quantity.) 
A firm $j\in I_{1}$ thus faces an environment with larger carbon emissions and similar production quantity. If $b_{j}$ is relatively small, the reaction is the expected one: firm $j$ emits less carbon and thus also produces less. In fact, this reduced production more than compensates the increase in quantity from $I_{int}$. Suppose now that there is some other firm $i\in I_{int}$ with a small coefficient $\alpha_{i}^{2}$; then this firm's optimality condition is hardly affected by the change in carbon; however, the decrease in quantity from the aforementioned firm $j$ has an outsized impact and leads firm $i$ to produce more, and hence also emit more carbon.
\end{remark}

\begin{remark}
The derivatives of $K$ and $Q$ with respect to $K_{ex}$ are the same as with respect to $A_{int}$; this follows from the fact that~\eqref{eq:linK} and~\eqref{eq:linQ} depend on the sum $A_{int}+K_{ex}$ rather than the individual quantities. In particular,
$Q$ is decreasing with respect to $K_{ex}$ whereas $K$ is increasing. The carbon emission $K - K_{ex}$ from the firms, on the other hand, is decreasing as
$
  \frac{\partial(K - K_{ex})}{\partial K_{ex}} = \frac{1}{N}(m + 1 + B_1 - N) < 0.
$
\end{remark}

\section{Proofs for Section~\ref{se:dynamics}}\label{se:appendixDynamics}

\begin{proof}[Proof of Proposition~\ref{pr:dynamics}]
  Let $K_{m}$ be the total carbon at the end of the $m$-th round. The sequence $(K_{m})$ is monotone increasing, hence convergent. In any given round~$m$, the optimality conditions of Proposition~\ref{pr:FOC} hold with $K=K_{m-1}+\sum_{j}k_{j}$ and $Q=\sum_{j}q_{j}$.
  
  Suppose for contradiction that $\lim K_{m}> a$. Let $m\geq1$ be the first round such that $K_{m}> a$, then in particular $K_{m}> a^{(m)}_{j}$ for all $1\leq j\leq n$. In view of Proposition~\ref{pr:FOC}, all firms are in $I^{q}_{0}\cup I^{q}_{0}$, hence no firm emits carbon and $K_{m}=K_{m-1}$. This contradicts the choice of $m$. (In fact the same argument shows that if $a>0$, then $K_{m}$ is strictly smaller than $a$ for all $m$; that is, the limit is not reached in finite time.)

  It remains to show that $\lim K_{m} \geq a$. Assume first that $Q_{m}\geq z$ holds for infinitely many rounds $m$. Proposition~\ref{pr:FOC} shows that all firms are in $I_{0}^{q}\cup I_{1}$, so that the additional carbon in round $m$ is $K_{m}-K_{m-1}=Q_{m}\geq z$. As $z>0$, the presence of infinitely many such rounds implies that $\lim K_{m} =\infty \geq a$ as desired. If the first assumption does not hold, there exists $m_{0}$ such that $Q_{m}< z$ for all $m\geq m_{0}$. Define $a^{(m)}=\max\{a^{(m)}_{1},\dots,a^{(m)}_{n}\}$. If $m\geq m_{0}$ and $a^{(m)}<\infty$, then
\begin{equation}\label{eq:geomGrowth}
  K_{m}\geq \max\big\{K_{m-1}, \lambda a^{(m)}+ (1-\lambda)K_{m-1})\big\}
\end{equation}
for some $\lambda>0$ independent of $m$. This follows by noting that $K_{m}\geq K_{m-1}$ and applying Lemma~\ref{le:minimalCarbon} below to the firm~$i$ with $a^{(m)}_{i}=a^{(m)}$. Whereas if $m\geq m_{0}$ and $a^{(m)}=\infty$, we check directly that $k_{i}\geq \frac12 (A-c-Q_{-i})\geq d/2$ and hence 
\begin{equation}\label{eq:geomGrowthExeption}
  K_{m}\geq K_{m-1} + d/2.
\end{equation}
The combination of~\eqref{eq:geomGrowth} and~\eqref{eq:geomGrowthExeption} shows that $\lim K_{m} \geq \limsup a^{(m)} =a$, and that completes the proof that $\lim K_{m}=a$.
\end{proof}

\begin{lemma}\label{le:minimalCarbon}
  Consider an equilibrium with $Q< z$. Then every firm~$i$ satisfies $k_{i}\geq \lambda (a_{i}-K_{-i})$ for $\lambda:=\frac12\frac{b\alpha_{i}^{2}}{1+b\alpha_{i}^{2}}>0$.
\end{lemma}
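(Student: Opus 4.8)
The plan is to split according to which of the buckets $I^{r}_{0}$, $I_{int}$, $I_{1}$ of Proposition~\ref{pr:FOC} firm~$i$ falls into, and to verify the claimed inequality in each case using the closed-form expressions there. First I would record two preliminary reductions. Since the asserted constant $\lambda=\frac12\beta_{i}/(1+\beta_{i})$ is strictly positive we have $\beta_{i}=b\alpha_{i}^{2}>0$, so the degenerate case $\alpha_{i}^{2}=0$ is out of scope (it is handled separately in the proof of Proposition~\ref{pr:dynamics}). Next, the hypothesis $Q<z$ rules out white firms: if $q_{i}=0$ then $Q_{-i}=Q<z$, which is incompatible with the requirement $Q_{-i}\ge z$ in the definition of $I^{q}_{0}$. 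Hence every firm lies in $I^{r}_{0}\cup I_{int}\cup I_{1}$, and only these three cases need to be treated.

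The two ``constrained'' buckets are immediate. If $i\in I^{r}_{0}$ then $k_{i}=0$ while $K_{-i}\ge a_{i}$, so $\lambda(a_{i}-K_{-i})\le 0=k_{i}$. If $i\in I_{int}$ then $k_{i}=\frac12(a_{i}-K_{-i})$ with $a_{i}-K_{-i}>0$; since $\lambda=\frac12\beta_{i}/(1+\beta_{i})\le\frac12$ this gives $\lambda(a_{i}-K_{-i})\le\frac12(a_{i}-K_{-i})=k_{i}$.

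The only case carrying any content is $i\in I_{1}$, where $k_{i}=q_{i}=\frac12(1+\beta_{i})^{-1}[A-c-Q_{-i}-\beta_{i}K_{-i}]$. Here I would use the rearrangement already employed inside the proof of Proposition~\ref{pr:FOC}: writing $A-c=z+d$ and $\beta_{i}a_{i}=d$ gives $A-c-Q_{-i}-\beta_{i}K_{-i}=(z-Q_{-i})+\beta_{i}(a_{i}-K_{-i})$. Because $q_{i}\ge 0$ we have $Q_{-i}\le Q<z$, hence $z-Q_{-i}>0$; dropping this nonnegative term yields $k_{i}\ge\frac12(1+\beta_{i})^{-1}\beta_{i}(a_{i}-K_{-i})=\lambda(a_{i}-K_{-i})$, irrespective of the sign of $a_{i}-K_{-i}$. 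This exhausts all cases. I do not expect a genuine obstacle: the only non-mechanical point is spotting the identity $A-c-Q_{-i}-\beta_{i}K_{-i}=(z-Q_{-i})+\beta_{i}(a_{i}-K_{-i})$ and noticing that $z-Q_{-i}>0$ comes for free from $Q<z$; everything else is a one-line comparison per bucket, and the output $k_{i}\ge\lambda(a_{i}-K_{-i})$ together with $K_{-i}\ge K_{m-1}$ is exactly the feedback inequality needed in~\eqref{eq:geomGrowth}.
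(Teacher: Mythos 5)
Your proof is correct and follows essentially the same route as the paper's: rule out $I^{q}_{0}$ via $Q<z$, note the green and orange cases are immediate from Proposition~\ref{pr:FOC}\,(ii)--(iii) with $\lambda\le\tfrac12$, and in the red case use $Q_{-i}\le Q<z$ to drop the term $z-Q_{-i}$ from $A-c-Q_{-i}-\beta_{i}K_{-i}=(z-Q_{-i})+\beta_{i}(a_{i}-K_{-i})$, which is exactly the paper's inequality $A-c-Q_{-i}\ge d=\beta_{i}a_{i}$ in disguise. No gaps.
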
 

\begin{proof}
  Let $i$ be any firm. As $Q< z$, we have $i\notin I^{q}_{0}$ in Proposition~\ref{pr:FOC}. If $i\in I^{q}_{0}$, then $a_{i}-K_{-i}<0$ and the claim is trivial. For $i\in I_{int}$ the claim follows from Proposition~\ref{pr:FOC}\,(iii), even with $\lambda=1/2$. For $i\in I_{int}$, note that $Q< z$ implies $Q_{-i}< z$ and then $A-c-Q_{-i}\geq d=b\alpha_{i}^{2}a_{i}$. Thus Proposition~\ref{pr:FOC}\,(iv) yields 
  \begin{align*}
    k_{i}
    =q_{i}
    =\frac12\frac{1}{1+b\alpha_{i}^{2}} [ A-c - Q_{-i} - b\alpha_{i}^{2} K_{-i}]
    \geq \frac12\frac{b\alpha_{i}^{2}}{1+b\alpha_{i}^{2}} [a_{i} - K_{-i}]
  \end{align*} 
  as claimed.
\end{proof}

\begin{proof}[Proof of Proposition~\ref{pr:dynamicsNoGreen}]
We may assume that $A>c$; otherwise no goods are produced and the result is clear. Note that $c+d\geq A$ (i.e., $z\leq0$) implies $z\leq Q$ and hence $I^{r}_{0}=I_{int}=\emptyset$ in any round; cf.\ Proposition~\ref{pr:FOC}.
 Fix a round $m$ and let $i$ be such that $\beta^{(m)}_{i}=\min\{\beta^{(m)}_{1},\dots,\beta^{(m)}_{n}\}$. Suppressing $m$ in the notation and assuming first that $\beta_{i}>0$, the formula for $Q$ in Example~\ref{ex:noGreenTech} shows that
 \begin{align*}
    Q&=\frac{1}{n_{1}+1}\Bigg( \sum_{j\in I_{1}} \frac{A-c -\beta_{j}K_{m-1}}{1+\beta_{j}}\Bigg) \geq \frac{1}{n_{1}+1}\left( \frac{A-c -\beta_{i}K_{m-1}}{1+\beta_{i}}\right) \\
    &\geq \frac{\beta_{i}}{(n_{1}+1)(1+\beta_{i})} \left(\frac{A-c}{\beta_{i}} -K_{m-1}\right)
    \geq \frac{\beta}{(n+1)(1+\beta)} \left(\frac{A-c}{\beta_{i}} -K_{m-1}\right)
\end{align*}
  and hence 
  \begin{equation}\label{eq:geomGrowthNoGreen1}
    K_{m}=K_{m-1}+Q_{m}\geq \lambda \frac{A-c}{\beta_{i}} + (1-\lambda)K_{m-1}
  \end{equation}
  for $\lambda=\frac{\beta}{(n+1)(1+\beta)}>0$. Whereas if $\beta_{i}=0$, the same line of argument yields 
    \begin{equation}\label{eq:geomGrowthNoGreen2}
    Q_{m}\geq \frac{A-c}{n+1}.
  \end{equation}
  At least one of \eqref{eq:geomGrowthNoGreen1} and \eqref{eq:geomGrowthNoGreen2} holds in any round $m$ and we conclude that $\lim K_{m}\geq (A-c)/\beta$ (i.e., $\lim K_{m}=\infty$ if $\beta=0$).
  
  It remains to show that $\lim K_{m}\leq (A-c)/\beta$ if $\beta>0$. Again, fix a round~$m$. As $1/(1+x)$ is decreasing and $x/(1+x)$ is increasing in $x$, Example~\ref{ex:noGreenTech} yields
 \begin{align*}
    Q&=\frac{1}{n_{1}+1}\Bigg( \sum_{j\in I_{1}} \frac{A-c -\beta_{j}K_{m-1}}{1+\beta_{j}}\Bigg)
    \leq \frac{n_{1}}{n_{1}+1}\left( \frac{A-c -\beta_{i}K_{m-1}}{1+\beta_{i}}\right) \\
    &\leq \frac{n_{1}}{n_{1}+1}\left( \frac{A-c -\beta_{i}K_{m-1}}{\beta_{i}}\right)
    < \frac{A-c}{\beta_{i}} -K_{m-1}.
 \end{align*} 
  Thus, $K_{m}=K_{m-1}+Q<(A-c)/\beta_{i}\leq (A-c)/\beta$ and the result follows.
\end{proof} 

\section{Other Utility Functions}\label{se:otherUtility}

In this appendix we discuss a generalization where consumers' utility function $u$ is not quadratic. For simplicity we focus on the interior type of equilibria with $K_{ex}=0$ and a smooth utility function $u:\R_{+}\to\R$. Stated in terms of the quantity $q_{i}$ and the carbon $k_{i}=r_{i}q_{i}$, firm $i$'s expected profit is
\begin{align*}%
  E_{i}[\pi_{i}(k_{i},q_{i})] 
  &= u'(Q)q_{i} - bE_{i}[\alpha^{2}]Kr_{i} q_{i} - [c+(1-r_{i})d] q_{i} \nonumber\\
  &=u'(q_{i}+Q_{-i})q_{i} - b\alpha_{i}^{2} (k_{i} + K_{-i})k_{i} - (c+d)q_{i}+dk_{i}.
\end{align*}
Interior maximizers satisfy $\partial_{q}E_{i}[\pi_{i}(k_{i},q_{i})] =0$ which with $Q_{-i}=\sum_{j\neq i}  q_{j}$ leads to the equation
$$
  u''(\tsum_{j} q_{j})q_{i} + u'(\tsum _{j}q_{j})= c+d, \quad 1\leq i\leq n.
$$
Given any solution $(q_{1},\dots q_{n})$, the equation implies that $q_{i}=\frac{c+d-u'(Q)}{u''(Q)}$ for $Q=\sum_{j} q_{j}$, for all $i$. That is, just as in Example~\ref{ex:allInt}, any solution consists of a common quantity $q_{0}$ for all firms, and clearly $q_{0}$ must be a solution of
\begin{equation}\label{eq:qFOCsymmetric}
  u''(\tsum_{j} q_{j})q_{i} + u'(\tsum_{j} q_{j})= c+d.
\end{equation}
As discussed below, this equation may have zero, one or more solutions for general $u$, but typical examples are well-behaved. Any solution induces an equilibrium as follows: suppose that~\eqref{eq:qFOCsymmetric} has an interior solution $q_{0}$ and that the associated optimal $k_{i}\in [0,r_{i}]$ are interior. Then the first-order condition $\partial_{k}E_{i}[\pi_{i}(k_{i},q_{i})] =0$ yields that $k_{i}=a_{i}-K$ for $K=\sum_{j} k_{j}$, or equivalently $r_{i}=k_{i}/q_{i}=(a_{i}-K)/q_{0}$, a direct extension of Example~\ref{ex:allInt}.

\begin{example}\label{ex:powerUtility}
  (a) For the logarithmic utility $u(x)=\log(x)$, the unique solution of~\eqref{eq:qFOCsymmetric} is 
  $
    q_{0} = \frac{n-1}{(c+d)n^{2}}
  $
  whenever $n\geq2$.
  
  (b) More generally, consider the CRRA utility $u(x)=(1-\gamma)^{-1}x^{1-\gamma}$ where $0<\gamma<n$ and $\gamma=1$ corresponds to the logarithmic case. Then %
  the unique solution of~\eqref{eq:qFOCsymmetric} is
  $
    q_{0} = \sqrt[\gamma]{ \frac{n-\gamma}{(c+d) \gamma n^{\gamma+1}} }.
  $
  For $\gamma\geq n$, no positive solution exists.
\end{example}

The following is a general sufficient condition for existence.

\begin{proposition}
  Define the relative risk aversion $\rho(x)=-\frac{x u''(x)}{u'(x)}$. Suppose that $u$ satisfies the Inada conditions $u'(0)=\infty$ and $u'(\infty)=0$ and that $\sup_{0\leq x \leq \eps}\rho(x)<n$ for some $\eps>0$. Then~\eqref{eq:qFOCsymmetric} has a solution. If moreover $nu'''(nx)x + (n+1)u''(nx) <0$,  the solution is unique.
\end{proposition}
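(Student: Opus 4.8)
The plan is to reduce the claim to an analysis of the scalar equation~\eqref{eq:qFOCsymmetric}, rewritten as $F(q)=c+d$ where $F(q):=u''(nq)q+u'(nq)$ for $q\ge 0$. Existence will come from the intermediate value theorem applied to $F$ on $(0,\infty)$, and uniqueness from a strict-monotonicity argument for $F$ near the relevant range; once a solution $q_0$ of~\eqref{eq:qFOCsymmetric} is found, the discussion preceding Example~\ref{ex:powerUtility} produces the associated equilibrium (with $r_i=(a_i-K)/q_0$), so the only real content is the scalar equation.

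First I would establish the endpoint behavior of $F$. As $q\downarrow 0$ we have $nq\downarrow 0$, so by the Inada condition $u'(0)=\infty$ the term $u'(nq)\to\infty$; the remaining term is $u''(nq)q=-\rho(nq)\,u'(nq)/n$, and since $\sup_{0\le x\le\eps}\rho(x)<n$ we get, for $q$ small, $F(q)=u'(nq)\bigl(1-\rho(nq)/n\bigr)\ge u'(nq)\bigl(1-\tfrac1n\sup_{[0,\eps]}\rho\bigr)\to\infty$. Hence $F(q)\to\infty$ as $q\downarrow0$. At the other end, as $q\to\infty$, $u'(nq)\to0$ by the Inada condition $u'(\infty)=0$, and $u''(nq)q\le 0$ since $u$ is concave (which follows from $\rho\ge 0$, itself implicit in the hypotheses, or one can simply note $u''\le 0$ is needed for the problem to make sense); thus $\limsup_{q\to\infty}F(q)\le 0<c+d$. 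Since $F$ is continuous on $(0,\infty)$ (as $u$ is smooth), the intermediate value theorem yields some $q_0>0$ with $F(q_0)=c+d$. This proves existence.

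For uniqueness under the extra hypothesis $nu'''(nx)x+(n+1)u''(nx)<0$, I would differentiate: $F'(q)=nu'''(nq)q+u''(nq)+nu''(nq)=nu'''(nq)q+(n+1)u''(nq)$, which is exactly the quantity assumed negative (with $x=q$). So $F$ is strictly decreasing on $(0,\infty)$, and therefore $F(q)=c+d$ has at most one solution. Combined with the existence part this gives the unique $q_0$, and plugging into the construction described before Example~\ref{ex:powerUtility} yields the (essentially unique) interior equilibrium.

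The main obstacle, and the step deserving the most care, is the behavior of $F$ as $q\downarrow0$: one must be sure that the hypothesis $\sup_{[0,\eps]}\rho<n$ is genuinely what controls the sign and blow-up of $F(q)=u'(nq)(1-\rho(nq)/n)$ for small $q$, i.e.\ that the bracket stays bounded away from zero while $u'(nq)\to\infty$. Everything else (continuity, the $q\to\infty$ limit, the monotonicity computation) is routine. One should also remark that $q_0$ lies in the interior of $[0,A]$-type feasibility so that the reduction to the interior equilibrium is legitimate, and note—as the text already does—that whether the induced $k_i$ are themselves interior is a separate condition not claimed by the proposition.
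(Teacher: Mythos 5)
Your proposal is correct and follows essentially the same route as the paper: both reduce the claim to the scalar function $\varphi(q)=u''(nq)q+u'(nq)=u'(nq)\bigl(1-\rho(nq)/n\bigr)$, get existence from the Inada conditions and the bound $\sup_{[0,\eps]}\rho<n$ via the intermediate value theorem, and get uniqueness from the derivative computation $\varphi'(q)=nu'''(nq)q+(n+1)u''(nq)<0$. Your treatment of the endpoint behavior (the $\limsup$ at infinity and the explicit lower bound on the bracket near zero) is, if anything, slightly more careful than the paper's.
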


\begin{proof}
  Note that $\rho(x)$ is nonnegative and that
  $$
    \varphi(x):=u''(nx)x + u'(nx) = u'(nx)\bigg[1+ \frac{u''(nx)x}{u'(nx)}\bigg]=u'(nx)\bigg[1- \frac{1}{n}\rho(nx)\bigg]
  $$
  is a continuous function of $x$. Under the stated conditions, $\varphi(0)=\infty$ and $\varphi(\infty)\leq0$. Thus, existence of a solution follows from the intermediate value theorem. Moreover, uniqueness must hold when $\varphi$ is strictly decreasing, and that is implied by the fact that 
  $
    \varphi'(x)=nu'''(nx)x + (n+1)u''(nx) <0.
  $
\end{proof}

\end{document}